\def\rife#1{(\ref{#1})}
\newbox\barleftbox 
\newbox\barrightbox 
\newcommand*{\thethmname}{} 
\newtheorem{innerthm}{\thethmname} 
\newcommand{\leftstrip}[1]{%
   \valign{##\cr 
           \leaders\copy\barleftbox\vfill\cr 
           \vbox{\hsize\marginparwidth\advance\hsize-8pt 
                 \raggedright\sffamily\footnotesize #1}\cr 
   } 
} 
\newcommand{\rightstrip}[1]{%
   \valign{##\cr 
           \vbox{\hsize\marginparwidth\advance\hsize-8pt 
                 \raggedright\sffamily\footnotesize #1}\cr 
           \leaders\copy\barrightbox\vfill\cr 
   } 
} 
\let\oldmarginpar\marginpar 
\renewcommand\marginpar[1]{%
  \oldmarginpar[\leftstrip{#1}]{\rightstrip{#1}}}
\newtheorem{theorem}{Theorem}[section]
\newtheorem{lemma}[theorem]{Lemma}
\newtheorem{e-proposition}[theorem]{Proposition}
\newtheorem{corollary}[theorem]{Corollary}
\newtheorem{e-definition}[theorem]{Definition}
\newtheorem{remark}{Remark}[section]
\newtheorem{example}{Example}[section]
\newtheorem{theoreme}{Th\'eor\`eme}[section]
\newtheorem{proposition}[theoreme]{Proposition}
\newtheorem{definition}[theoreme]{Definition}
\numberwithin{equation}{section}
\long\def\salta#1{\relax}
\def\la{\lambda}
\def\rn{\mathbb{R}^{N}}
\def\de{\delta}
\def\re{\mathbb{R}}
\def\liq{L^{\infty}(Q)}
\def\ga{\gamma}
\def\be{\begin{equation}}
\def\ee{\end{equation}}
\def\tku{T_{k}(u)}
\def\tkun{T_{k}(u_n)}
\def\vare{\varepsilon}
\def\capp{\text{\text{cap}}_{p}}
\def\kap#1{\text{\text{cap}}_{#1}}
\def\dive{{\rm div}}
\def\sob{W^{1,p}_{0}(\Omega)}
\newcommand{\elle}[1]{L^{#1}(\Omega)}
\newcommand{\pelle}[1]{L^{#1}(Q)}
\def\ujn{u_{n}}
\def\into{\int_{\Omega}}
\def\intq{\int_Q}
\def\liq{L^{\infty}(Q)}
\def\w-1p'{W^{-1,p'}(\Omega)}
\def\w-1pd{W^{-1,p'}(D)}
\def\pw-1p'{L^{p'}(0,T;W^{-1,p'}(\Omega))}
\def\l{\textsl{L}}
\def\dys{\displaystyle}
\def\luq{L^{1}(Q)}
\def\lp'n{(L^{p'}(\Omega))^{N}}
\newcommand{\supp}{\operatorname{supp}}
\def\lio{L^{\infty}(\Omega)}
\def\psob{L^{p}(0,T;W^{1,p}_{0}(\Omega))}
\def\huz{H^1_0 (\Omega)}
\def\eps{\varepsilon}
\def\sobl{L^2 (0,T ; H^1_0 ( \Omega ))}
\def\luo{L^{1}(\Omega)}
\def\lio{L^{\infty}(\Omega)}
\def\vep{\varepsilon}
\def\limitate#1{L^{\infty}(0,T;L^{#1}(\Omega))}
\def\duale{W^{-1,p'}(\Omega)}
\def\parelle#1{L^{#1}(Q)}
\def\vfi{\varphi}
\def\car#1{\raise2pt\hbox{$\chi$}_{#1}}
\def\adixt#1{a(t,x,#1)}
\def\lio{L^{\infty}(\Omega)}
\def\pmisure{\cM(Q)}
\def\liq{L^{\infty}(Q)}
\def\lp'n{(L^{p'}(\Omega))^{N}}
\def\huz{H^1_0 (\Omega)}
\def\sob{W^{1,p}_{0}(\Omega)}
\def\t1p0{T^{1,p}_{0}(\Omega)}
\def\w-1p'{W^{-1,p'}(\Omega)}
\def\pw-1p'{L^{p'}(0,T;W^{-1,p'}(\Omega))}
\def\psob{L^{p}(0,T;W^{1,p}_{0}(\Omega))}
\def\lil2{L^{\infty}(0,T;L^2 (\Omega))}
\def\l2h10{L^2 (0,T ; H^1_0 ( \Omega ))}
\def\m2{M^{\frac{N(p-1)}{N-1}}(\Omega)}
\def\tkun{T_k (u_n)}
\def\tkdun{T_{k,\delta}(u_n)}
\def\skdun{S_{k,\delta}(u_n)}
\def\tkdujn{T_{k,\delta}(\ujn)}
\def\skdujn{S_{k,\delta}(u_{n})}
\def\capp{{\text{\rm cap}}_{p}}
\def\l{\textsl{L}}
\def\al{\alpha}
\def\de{\delta}
\def\vare{\varepsilon}
\def\into{\int_{\Omega}}
\def\intq1{\displaystyle \int_{\Omega \times (0, 1)}}
\def\dys{\displaystyle}
\def\m{\noalign{\medskip}}
\newcommand{\RR}{\mathbb{R}}
\newcommand{\cD}{\mathcal{D}}
\newcommand{\cM}{\mathcal{M}}
\begin{document}

\title{Diffuse measures and nonlinear parabolic equations}

\author[F. Petitta]{Francesco Petitta}
\address{Francesco Petitta\hfill\break\indent 
Universitat de Valencia\hfill\break\indent 
Departamento de An\'alisis Matem\'atico\hfill\break\indent 
C/ Dr. Moliner 50\hfill\break\indent 
46100 Burjassot, Valencia, Spain}
\email{francesco.petitta@uv.es}
\author[A.~C. Ponce]{Augusto C.~Ponce} 
\address{Augusto C.~Ponce\hfill\break\indent 
Universit\'e catholique de Louvain\hfill\break\indent
Institut de Recherche en Math\'ematique et Physique\hfill\break\indent
Chemin du Cyclotron 2\hfill\break\indent
1348 Louvain-la-Neuve, Belgium}
\email{augusto.ponce@uclouvain.be}
\author[A. Porretta]{Alessio Porretta} 
\address{Alessio Porretta\hfill\break\indent
Universit\`a di Roma Tor Vergata\hfill\break\indent
Dipartimento di Matematica\hfill\break\indent
Via della ricerca scientifica 1\hfill\break\indent
00133 Roma, Italy}
\email{porretta@mat.uniroma2.it}


\thanks{The first author is partially supported by the
Spanish PNPGC project, reference MTM2008- 03176. The second author (A.C.P.) was partially supported by the Fonds de la Recherche scientifique---FNRS (Belgium) and by the Fonds sp\'eciaux de Recherche (Université catholique de Louvain)}

\begin{abstract}

Given a parabolic cylinder $Q =(0,T)\times\Omega$, where $\Omega\subset \rn$ is a bounded domain, we 
prove new properties of solutions of 
\[
u_t-\Delta_p u = \mu \quad \text{in $Q$}
\]
with Dirichlet boundary conditions, where $\mu$ is a finite Radon measure in $Q$. We first prove  a priori estimates on the $p$-parabolic capacity of level sets of $u$. We then show that diffuse measures (i.e.\@ measures which do not charge sets of zero parabolic $p$-capacity) can be strongly approximated by the measures  $\mu_k = (T_k(u))_t-\Delta_p(T_k(u))$, and  we introduce a new notion of renormalized solution based on this property. We finally apply our new approach  to prove the existence of solutions of 
$$
u_t-\Delta_{p} u + h(u)=\mu \quad \text{in $Q$,}
$$
for any function $h$ such that  $h(s)s\geq 0$ and for any diffuse measure $\mu$; when $h$ is nondecreasing we also prove uniqueness in the renormalized formulation. Extensions are given to the case of more general nonlinear operators in divergence form.
\end{abstract}

\maketitle

{\bf Key words}: {\sl parabolic capacity, measure data, nonlinear equations with absorption, renormalized solutions.}


\section{Introduction and main results}

Given a bounded domain $\Omega\subset \rn$ and $T>0$, let $Q =(0,T)\times\Omega$. We denote  by $\mathcal{M}(Q)$ the vector space of all finite Radon measures in $Q$ equipped with the norm $\|\mu\|_{\cM(Q)} = |\mu|(Q)$.
This paper is motivated by the study of  the evolution problem
\be\label{lot}
\begin{cases}
  u_t-\Delta_{p} u + h(u)=\mu & \text{in } Q,\\
 u=u_0  & \text{on}\ \{0\} \times \Omega,\\
 u=0 &\text{on}\ (0,T)\times\partial\Omega,
  \end{cases}
\ee
 where $-\Delta_pu= -{\rm div}\left( |\nabla u|^{p-2}\nabla u\right)$ is the $p$-Laplace operator, $p>1$, $\mu\in \pmisure$, $u_0 \in L^1(\Omega)$ and $h : \RR \to \RR$ is a continuous function such that $h(s)s\geq0$ for large $|s|$. It is well known (see e.g. \cite{BaPi}) that problem \eqref{lot} may not have a solution for every measure $\mu$ (unless some growth restriction is imposed on $h$). As suggested by the stationary  case (see \cite{bmp}),  if one looks for  a  \lq\lq general solvability\rq\rq\  result    (i.e.  existence of solutions of \eqref{lot} for \emph{any function $h$}),
 then it is necessary to restrict the attention to 
   the class of   measures $\mu$ which do not charge sets of zero capacity. Here, the notion of capacity which is relevant is the so-called \emph{parabolic $p$-capacity}. 

To be precise, we recall that for every $p > 1$ and every open subset
$U \subset Q$, the $p$-parabolic capacity of $U$ is given by (see \cite{pierre1,dpp})
\begin{equation}\label{1}
\capp(U)=\inf{ \Big\{\|u\|_{W} : u\in W,\ u \geq \chi_{U}\  \text{a.e. in}\  Q \Big\}},
\end{equation}
where 
\begin{equation}\label{W}
W=\big\{ u\in L^{p}(0,T;V) : u_{t}\in L^{p'}(0,T;V') \big\},
\end{equation}
being $V=\sob \cap \elle 2$ and $V'$  its dual space. As usual $W$ is endowed with the norm \begin{equation}\label{2}
\|u\|_{W} =\|u\|_{ L^{p}(0,T;V)} + \|u_{t}\|_{ L^{p'}(0,T; V')}.
\end{equation} 
The $p$-parabolic capacity $\capp$ is then extended to arbitrary Borel subsets $B \subset Q$ as
\[
\capp(B) = \inf{\Big\{ \capp(U) : B \subset U \text{ and } U \subset Q\ \text{is open}  \Big\}}.
\]

\vskip0.5em
Henceforth, we call a finite measure $\mu$ \emph{diffuse}\/ if it does not charge sets of zero $p$-parabolic capacity, i.e.\@ if $\mu(E) = 0$ for every Borel set $E \subset Q$ such that $\capp(E) = 0$.  The subspace of all diffuse measures in $Q$ will be denoted by $\cM_0(Q)$. 

\vskip0.5em
One of our goals is to prove that  \eqref{lot}  admits a solution for every diffuse measure and every $h$ satisfying the sign condition. 
In the elliptic case, such result can be proved  using the representation of   diffuse measures as elements in $\elle1+\duale$ (see \cite{BGO}). Moreover, if $h$ is increasing, then (entropy or renormalized) solutions are unique.

In the parabolic case,  the situation is more delicate.
According to a representation theorem for  diffuse measures proved in \cite{dpp}, for every $\mu \in \mathcal{M}_0(Q)$ there exist $f\in L^{1}(Q)$, $g\in
L^{p}(0,T;V)$ and $\chi\in \pw-1p'$ such that
\begin{equation}\label{cap3c1}
\mu = f + g_t + \chi \quad \text{in } \mathcal{D'}(Q).   
\end{equation}
In the same  paper, in order to deal with nonlinear equations where the source term is a  diffuse measure, the authors introduced a renormalized formulation which is based on this representation.  However, in contrast with the elliptic case, such representation, as well as the formulation suggested in \cite{dpp}, are not suitable to handle  the case of absorption terms as in \eqref{lot}. The main reason is that  a solution of 
$$
u_t-\Delta_p u=\mu=f+ \chi+g_t \quad \text{in $Q$}
$$
is meant in the sense that $v= u-g$ satisfies
$$
v_t-\Delta_p(v+g)=f+ \chi  \quad \text{in $Q$}.
$$
The same approach for problem \eqref{lot} would transform the absorption term $h(u)$ into $h(v+g)$. However, since no growth restriction  is made on $h$, this term can not be easily handled if $g$ is not bounded.

Since the decomposition \eqref{cap3c1} is not uniquely determined,  a natural question would be whether every diffuse measure can be written as \eqref{cap3c1} for some $g \in L^\infty(Q)$. 
Unfortunately the answer is \textit{no}, as we show in Example~\ref{unb} below.

\vskip0.5em
In this paper, we  overcome this obstruction by developing  a different approach to deal with diffuse measures. In this way we establish new properties of diffuse measures (related to different  types of approximations) and new results for parabolic equations (including \eqref{lot}). Both issues  are closely related as we will see later. 

As far as diffuse measures are concerned, one of the results that we prove  is that every $\mu \in \cM_0(Q)$ can be strongly approximated by measures  which admit decomposition \eqref{cap3c1} with $g\in L^\infty(Q)$. 

\begin{theorem}\label{app}
Let $\mu\in \mathcal{M}_0(Q)$. Then, for every $\vare >0$ there exists $\nu \in \cM_0(Q)$ such that 
\begin{equation}\label{app1}
\|\mu-\nu\|_{\mathcal{M}(Q)}\leq \vare \quad \text{and} \quad \nu = w_t  - \Delta_p w \quad \text{in } \cD'(Q),
\end{equation}
where $w\in \psob\cap\liq$.
\end{theorem}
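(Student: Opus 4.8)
The plan is to approximate $\mu$ in three successive reductions, each reducing to a measure of a strictly more tractable type, and then to solve a parabolic problem whose datum is the final (bounded, regular) measure. The key device throughout will be the representation \eqref{cap3c1} together with truncation.

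First I would use the representation theorem from \cite{dpp}: write $\mu = f + g_t + \chi$ in $\cD'(Q)$ with $f\in L^1(Q)$, $g\in L^p(0,T;V)$ and $\chi\in \pw-1p'$. The components $f$ and $\chi$ are already "good": $\chi = -\dive(F)$ for some $F\in (L^{p'}(Q))^N$, and both can be approximated strongly (in $\cM(Q)$, in fact in the stronger norms of $L^1$ and $W^{-1,p'}$ respectively) by data arising from functions in $\psob\cap\liq$; indeed, if $G\in\psob\cap\liq$ then $-\Delta_p G\in\pw-1p'$, and the range of $G\mapsto -\Delta_p G$ over such $G$ is dense enough in $L^1(Q)+\pw-1p'$ for the $\cM(Q)$-norm. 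So the real obstruction is the term $g_t$, where $g$ need not be bounded (and by Example~\ref{unb} cannot in general be chosen bounded). The strategy for $g_t$ is to truncate: replace $g$ by $T_k(g)\in\psob\cap\liq$. Then $(T_k(g))_t\in\pw-1p'\subset\cM(Q)$? No — $(T_k(g))_t$ need not be a measure a priori, but $(T_k(g))_t - \Delta_p(T_k(g))$ does define an element of $L^1(Q)+\pw-1p'$, hence restricts to test functions, and the point is to control $g_t - (T_k(g))_t = (G_k(g))_t$ in a suitable weak sense. The cleaner route, which I expect the authors follow, is to not split at all but to work directly with $u$, the solution of $u_t - \Delta_p u = \mu$ with zero data: set $w = T_k(u)$, which lies in $\psob\cap\liq$, and let $\nu := w_t - \Delta_p w \in \cD'(Q)$. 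One checks $\nu\in\cM_0(Q)$ (it is a diffuse measure precisely by the truncation-approximation property announced in the introduction, or directly because $(T_k(u))_t$ is a diffuse measure whenever $u$ solves a measure-data problem). It then remains to show $\|\mu - \nu\|_{\cM(Q)}\to 0$ as $k\to\infty$.

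For the convergence $\|\mu-\nu_k\|_{\cM(Q)}\to 0$ with $\nu_k = (T_k(u))_t - \Delta_p(T_k(u))$, I would argue as follows. Formally $\mu - \nu_k = (G_k(u))_t - \dive\big(|\nabla u|^{p-2}\nabla u - |\nabla T_k(u)|^{p-2}\nabla T_k(u)\big) = (G_k(u))_t - \dive\big(\chi_{\{|u|>k\}}|\nabla u|^{p-2}\nabla u\big)$. Testing against $\varphi\in\cD(Q)$ with $\|\varphi\|_\infty\le 1$ and using the a priori estimates on $p$-parabolic capacity of level sets established earlier in the paper (the first main result), together with the standard energy estimate $\int_{\{k<|u|<k+1\}}|\nabla u|^p\to 0$ and the decay of $\mathrm{meas}\{|u|>k\}$, one bounds $|\langle\mu-\nu_k,\varphi\rangle|$ by a quantity independent of $\varphi$ that tends to $0$. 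The time-derivative term $(G_k(u))_t$ is handled by the usual argument: for a.e.\ $t$, $\int_\Omega G_k(u)(t)\,dx \to 0$, and one uses that $\|\mu-\nu_k\|_{\cM(Q)}$ equals the supremum over such $\varphi$, passing through a density/approximation of $\varphi$ by smooth functions and an integration by parts that transfers the time derivative onto $\varphi_t$ — but since $\varphi$ ranges over a ball in $C^1_c$, one must instead use that $\mu-\nu_k$ is a measure with an explicit density/divergence structure, so its total variation is controlled by $\|G_k(u)\|_{L^1}$-type and $\|\chi_{\{|u|>k\}}|\nabla u|^{p-1}\|_{L^1}$-type quantities directly. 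This is where the a priori capacity estimates and the fine properties of $u$ (e.g.\ that $u$ is, up to a set of zero capacity, quasi-continuous, and $G_k(u)\to 0$ in the appropriate space) do the real work.

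The main obstacle I anticipate is exactly this last point: showing that $\mu - \nu_k$ is genuinely a \emph{measure} (not merely a distribution) whose total variation goes to zero, since $(G_k(u))_t$ is the time derivative of an $L^\infty(0,T;L^1(\Omega))$ function and is not obviously a bounded measure. The resolution should be that $\mu - \nu_k$ coincides with $\mu$ restricted (in a capacitary sense) to the set $\{|u|\ge k\}$ plus a remainder from the zone $\{k<|u|<k+1\}$, and diffuseness of $\mu$ forces $|\mu|(\{|u|\ge k\})\to 0$ because $\capp(\{|u|\ge k\})\to 0$ by the a priori estimate and $\mu$ does not charge sets of small capacity in a quantitative way (absolute continuity of $\mu$ with respect to $\capp$ in the sense of: $\forall\varepsilon\ \exists\delta$, $\capp(E)<\delta\Rightarrow|\mu|(E)<\varepsilon$, which holds for finite diffuse measures). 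Granting that, one takes $k$ large enough that $|\mu|(\{|u|\ge k\}) < \vare/2$ and the energy remainder $< \vare/2$, and sets $\nu = \nu_k$, $w = T_k(u)$, which lies in $\psob\cap\liq$, completing the proof. $\finedimo$
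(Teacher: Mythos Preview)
Your overall strategy --- set $w=T_k(u)$ for $u$ a solution of $u_t-\Delta_p u=\mu$, take $\nu=(T_k(u))_t-\Delta_p(T_k(u))$, and use the capacitary estimate on $\{|u|>k\}$ together with diffuseness of $\mu$ to force $\|\mu-\nu\|_{\cM(Q)}$ small --- is exactly the right picture, and in fact is what the paper eventually establishes in Section~4 (see Proposition~\ref{stimfett}, estimate \eqref{estnum}). But as a proof of Theorem~\ref{app} it has a genuine circularity: at this point in the paper there is no solution $u$ to work with. Theorem~\ref{stimcap} is proved only for $\mu\in\cM(Q)\cap\pw-1p'$ and $u_0\in L^2(\Omega)$, i.e.\ for ordinary weak solutions $u\in W$. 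For a general diffuse $\mu$ the renormalized solution is constructed only in Section~4, and even then it need not have a cap--quasicontinuous representative, so the set $\{|u|>k\}$ has no obvious capacity and your key step ``$\capp(\{|u|\ge k\})\to 0$, hence $|\mu|(\{|u|\ge k\})\to 0$'' is not available.

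The paper closes this gap by inserting a mollification layer: set $\mu_n=\rho_n*\mu$ (smooth, so Theorem~\ref{stimcap} applies to the corresponding weak solutions $u_n$), define $\nu_n^k=(T_k(u_n))_t-\Delta_p(T_k(u_n))$, and show via the truncation computation you sketch that
\[
\|\nu_n^k-\mu_n\|_{\cM(Q)}\ \le\ 2\int_{\{|u_n|>k\}}|\mu_n|\,.
\]
The right--hand side is controlled \emph{uniformly in $n$} not by your $\varepsilon$--$\delta$ absolute continuity of the single measure $\mu$, but by the \emph{equidiffuse} property of the sequence $(\mu_n)$ (Proposition~\ref{equi}) combined with the uniform capacitary bound $\capp(\{|u_n|>k\})\le C k^{-\min(1/p,1/p')}$ from Theorem~\ref{stimcap}. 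One then passes to the weak$^*$ limit $n\to\infty$: compactness gives $\nu_n^k\overset{*}{\rightharpoonup}\nu^k:=(T_k(u))_t-\Delta_p(T_k(u))$, and lower semicontinuity of the total variation yields $\|\nu^k-\mu\|_{\cM(Q)}\le\varepsilon$ for $k$ large. The equidiffusion step is the ingredient you are missing; without it (or without the later renormalized--solution machinery) there is no way to make sense of ``$|\mu|$ restricted to $\{|u|\ge k\}$'' for the limiting $u$.
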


Not only is this density result interesting in itself, but also the construction and the properties of the approximation $\nu$ are important. Indeed, the function $w$ is  constructed as  the truncation of  a nonlinear  potential of $\mu$ (we call in this way  a  function $u$ such that $u_t-\Delta_p u=\mu$). As a consequence, the approximation  property for the measure $\mu$ is linked to  a property of its nonlinear potential. 

The main ingredient in the proof of Theorem~\ref{app} is a capacitary estimate on the level sets of $u$. Such estimates for solutions of  parabolic equations   have an independent interest and read as follows.

\begin{theorem}\label{stimcap}
Given $\mu \in \mathcal{M}(Q)\cap\pw-1p'$ and $u_0\in \elle2$, let $u\in W$ be the 
 (unique) weak
solution of
\begin{equation}\label{appeq}
\begin{cases}
    u_t-\Delta_{p} u=\mu & \text{in}\ Q,\\
u=u_0  & \text{on}\ \{0\} \times \Omega,\\
u=0 &\text{on}\ (0,T)\times\partial\Omega.
  \end{cases}
\end{equation}
Then,
\begin{equation}\label{stim}
\capp(\{|u| > k\})\leq C\max\left\{\frac{1}{k^{\frac{1}{p}}},\frac{1}{k^\frac{1}{p'}}\right\} \quad \forall k\ge 1,
\end{equation}
where $C >0$ is a constant depending on $\|\mu\|_{\mathcal{M}(Q)}$, $\|u_0\|_{\elle1}$ and $p$.
\end{theorem}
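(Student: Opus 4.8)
The plan is to derive the capacitary estimate from a quantitative bound on the norm of a suitable truncation of $u$ in the space $W$. Recall that if $v \in W$ satisfies $v \geq \chi_{\{|u|>k\}}$ a.e. in $Q$, then $\capp(\{|u|>k\}) \leq \|v\|_W$; the natural candidate is $v = \frac{1}{k}T_k(u)$ (or rather $\frac{1}{k}|T_k(u)|$, adjusted so it dominates the characteristic function on the level set; since on $\{|u|>k\}$ one has $|T_k(u)|=k$, the function $\frac1k|T_k(u)|$ works, and it lies in $W$ as soon as we control $T_k(u)$ in $L^p(0,T;V)$ and its time derivative in $L^{p'}(0,T;V')$). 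So the whole problem reduces to estimating
\[
\|T_k(u)\|_{L^p(0,T;V)} \quad\text{and}\quad \|(T_k(u))_t\|_{L^{p'}(0,T;V')}
\]
in terms of $k$, $\|\mu\|_{\cM(Q)}$, $\|u_0\|_{L^1(\Omega)}$ and $p$.

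\textbf{Step 1: energy estimate on $T_k(u)$.} First I would use $T_k(u)$ itself as a test function in \eqref{appeq}. Since $\mu \in \pw-1p'$, this is legitimate for the weak solution $u \in W$. Integrating over $(0,t)$ and using that $\int_0^t \langle u_s, T_k(u)\rangle\,ds = \intom{\Theta_k(u(t))} - \intom{\Theta_k(u_0)}$ where $\Theta_k(s)=\int_0^s T_k(r)\,dr \geq 0$ satisfies $0 \leq \Theta_k(s) \leq k|s|$, together with the ellipticity $\int_Q |\nabla T_k(u)|^p \leq \int_Q a(t,x,\nabla u)\cdot\nabla T_k(u)$, one gets
\[
\int_Q |\nabla T_k(u)|^p\,dx\,dt \leq k\,\|\mu\|_{\cM(Q)} + k\,\|u_0\|_{L^1(\Omega)}.
\]
Combined with the Poincaré inequality in $W^{1,p}_0(\Omega)$ and the obvious bound $\|T_k(u)\|_{L^\infty(0,T;L^2(\Omega))} \le$ (something like) $C(k\|\mu\|+k\|u_0\|_{L^1})^{1/2}$ coming from the same computation, this yields $\|T_k(u)\|_{L^p(0,T;V)}^p \leq C\,k$, hence $\|T_k(u)\|_{L^p(0,T;V)} \leq C\,k^{1/p}$.

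\textbf{Step 2: the time-derivative estimate.} From the equation, $(T_k(u))_t = \Delta_p u \cdot \chi_{\{|u|<k\}} + \mu\,\chi_{\{|u|<k\}}$ in a suitable weak sense — more carefully, one writes $(T_k(u))_t = \dive\big(|\nabla T_k(u)|^{p-2}\nabla T_k(u)\big) + \mu_k$ where $\mu_k$ is the part of $\mu$ that survives after truncation, and one checks this defines an element of $L^{p'}(0,T;V')$. The divergence term is controlled in $L^{p'}(0,T;W^{-1,p'}(\Omega))$ by $\||\nabla T_k(u)|^{p-1}\|_{L^{p'}(Q)} = \|\nabla T_k(u)\|_{L^p(Q)}^{p-1} \leq C\,k^{(p-1)/p} = C\,k^{1/p'}$ by Step 1. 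The measure term $\mu_k$ must be estimated in $L^{p'}(0,T;V')$; here one uses that $V = W^{1,p}_0(\Omega)\cap L^2(\Omega)$ embeds into $L^\infty$-type pairings only partially, so instead one absorbs $\mu_k$ by testing — more robustly, one estimates $\langle (T_k(u))_t, \varphi\rangle$ for $\varphi \in L^p(0,T;V)$ directly by splitting $\varphi$ against the diffusion term (Hölder, giving $k^{1/p'}\|\varphi\|$) and against $\mu$ (using $\|\varphi\|_{L^\infty} $ is not available, so one uses that $T_k(u)/k$ is the right normalization and that $\mu$ acts on the bounded function, contributing a term like $\|\mu\|_{\cM(Q)}$ times an $L^\infty$-norm that is itself $\le 1$ after normalization). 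The upshot is $\|(T_k(u))_t\|_{L^{p'}(0,T;V')} \leq C(k^{1/p'} + k^{1/p} + \|\mu\|)$; for $k \geq 1$ the dominant powers are $k^{1/p}$ and $k^{1/p'}$.

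\textbf{Step 3: conclusion.} Putting Steps 1 and 2 together, $\|T_k(u)\|_W \leq C(k^{1/p} + k^{1/p'})$ for $k \geq 1$. Applying this to $v=\frac1k|T_k(u)| \geq \chi_{\{|u|>k\}}$ (which also lies in $W$ with $\|v\|_W = \frac1k\|T_k(u)\|_W$, the absolute value being harmless since $|\cdot|$ is $1$-Lipschitz and $T_k(u)$ already has the required regularity — one may alternatively smooth and pass to the limit) gives
\[
\capp(\{|u|>k\}) \leq \|v\|_W \leq \frac{C}{k}\big(k^{1/p}+k^{1/p'}\big) = C\Big(k^{1/p-1}+k^{1/p'-1}\Big) = C\max\Big\{\tfrac{1}{k^{1-1/p}},\tfrac{1}{k^{1-1/p'}}\Big\},
\]
which is exactly \eqref{stim} after noting $1-1/p = 1/p'$ and $1-1/p' = 1/p$.

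\textbf{Main obstacle.} The delicate point is Step 2: controlling $(T_k(u))_t$ in $L^{p'}(0,T;V')$ rather than merely in $L^{p'}(0,T;W^{-1,p'}(\Omega))$. The measure $\mu$, even when it belongs to $\pw-1p'$, need not have good dependence on $k$ when paired against $V$-functions, and one must be careful that the truncation genuinely localizes $\mu$ in a way compatible with the $W$-norm; handling the term $\chi_{\{|u|<k\}}$ multiplying $\mu$ requires either an approximation argument (regularize $\mu$, pass to the limit keeping uniform bounds) or a direct duality computation exploiting that only the normalized truncation $T_k(u)/k$, bounded by $1$, ever gets tested against $\mu$. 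Getting the $k$-dependence sharp (so that the exponents $1/p$ and $1/p'$ — not something worse — appear) is where the real work lies.
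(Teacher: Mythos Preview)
Your Step~1 is fine and matches the paper. The gap is in Step~2, and it is a genuine one: \(T_k(u)\) does \emph{not} belong to \(W\) in general, so the quantity \(\|(T_k(u))_t\|_{L^{p'}(0,T;V')}\) you want to estimate need not even be finite. Writing the equation for the truncation gives
\[
(T_k(u))_t - \Delta_p T_k(u) = \mu\,\chi_{\{|u|<k\}} + (\text{boundary-layer term on }\{|u|=k\}),
\]
and both pieces on the right are merely measures. To place \((T_k(u))_t\) in \(L^{p'}(0,T;V')\) you would need to pair these measures against arbitrary test functions \(\varphi\in L^p(0,T;V)\); since \(V=W^{1,p}_0(\Omega)\cap L^2(\Omega)\) does not embed in \(L^\infty(\Omega)\) (unless \(p>N\)), a bound of the form \(\|\mu\|_{\cM(Q)}\|\varphi\|_{L^\infty}\) is unavailable. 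Your suggested workaround---that only the normalized \(T_k(u)/k\) is ever tested against \(\mu\)---confuses the roles: establishing membership in \(V'\) requires testing against \emph{all} \(\varphi\), not just the one function you would like to use afterwards. The approximation route you mention does not help either, since the uniform bound you need is precisely the one that fails. (A secondary issue: even if \(T_k(u)\in W\), it is not clear that \(|T_k(u)|\in W\), because \(|\cdot|\) does not obviously preserve \((\cdot)_t\in L^{p'}(0,T;V')\).)

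The paper circumvents this by never trying to put \(T_k(u)\) itself in \(W\). Instead it uses Pierre's device: solve the \emph{backward} problem
\[
-z_t - \Delta_p z = -2\,\Delta_p T_k(u), \qquad z(T)=T_k(u)(T),
\]
whose right-hand side lies in \(L^{p'}(0,T;W^{-1,p'}(\Omega))\) with norm \(\lesssim k^{1/p'}\) by Step~1. This \(z\) is honestly in \(W\) with \(\|z\|_W \leq C\max\{k^{1/p},k^{1/p'}\}\). When \(\mu\ge 0\) and \(u_0\ge 0\) one checks the Kato-type inequality \((T_k(u))_t - \Delta_p T_k(u)\ge 0\), so that by comparison \(z\ge T_k(u)\ge 0\); hence \(z/k\ge \chi_{\{u>k\}}\) and the capacity bound follows. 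Signed data are then handled by comparison with the solutions for \(\mu^+\), \(u_0^+\) and \(-\mu^-\), \(-u_0^-\), plus an approximation to pass from smooth \(\mu\) to general \(\mu\in\cM(Q)\cap L^{p'}(0,T;W^{-1,p'})\). The auxiliary function \(z\) is the missing idea in your outline.
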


In \eqref{stim} we have identified $u$ with its cap-quasicontinuous representative, which exists since $u\in W$  (see \cite{dpp}). In particular, the quantity $\capp(\{|u| > k\})$ is well-defined.

\vskip1em
Given a diffuse measure $\mu$, we apply Theorem~\ref{stimcap} to construct a measurable function $u : Q \to \RR$ such that the truncations $T_k(u)$ satisfy
\be\label{k1}
(T_k(u))_t-\Delta_p(T_k(u))= \mu+ \la_k \quad \text{in $Q$}
\ee
for a sequence of measures $(\la_k)$ such that
\be\label{k2}
\|\la_k\|_{\mathcal{M}(Q)}\to 0 .
\ee
We deduce in particular the strong approximation property given in Theorem~\ref{app}. On the other hand, this result also motivates an alternative formulation of the concept of renormalized solution of
\begin{equation}\label{appeq-new}
\begin{cases}
u_t-\Delta_{p} u=\mu & \text{in}\ Q,\\
u=u_0  & \text{on}\ \{0\} \times \Omega,\\
u=0 &\text{on}\ (0,T)\times\partial\Omega.
  \end{cases}
\end{equation}
in terms of properties \eqref{k1}--\eqref{k2}. Such a formulation, no more based on the decomposition \eqref{cap3c1}, can be extended to problem \eqref{lot} straightforwardly and turns out to be suitable to tackle the absorption problem.

We prove that this formulation extends the one given in \cite{BlMu} for $L^1$-data and, in case of problem \eqref{appeq-new},  is equivalent with the definition for diffuse measures given in \cite{dpp} (and therefore equivalent with the entropic formulation in \cite{DP}). This formulation we use is largely inspired by (and it is very close to) other versions of renormalized formulations in the literature as, for example, in \cite{DMOP} for elliptic equations and \cite{BCW} for conservation laws. 

We obtain in this way a new approach to solve nonlinear problems involving diffuse measures:

\begin{theorem}\label{thm-lot}
Let $\mu\in \cM_0(Q)$ and $u_0\in L^1 (\Omega)$. Let $h : \RR \to \RR$ be a continuous function satisfying
\[
h(s)s\geq 0 \quad \text{for every $|s|>L$},
\]
for some $L \ge 0$. Then, \eqref{lot} admits a renormalized solution (which is, in particular,  a distributional solution). If in addition $h$ is nondecreasing, then the renormalized solution is unique. 
\end{theorem}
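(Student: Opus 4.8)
The plan is to prove existence first and then, under the monotonicity assumption, uniqueness, in each case exploiting the new renormalized formulation built on the approximation property \eqref{k1}--\eqref{k2}.

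\medskip

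\textbf{Existence.} First I would set up an approximation scheme: regularize the data by taking $\mu_n \to \mu$ and $u_{0,n}\to u_0$ suitably (for instance convolutions, or the standard truncation/mollification of measures), with $\mu_n$ bounded in $L^{p'}(0,T;W^{-1,p'})\cap L^1(Q)$ and $h_n = T_n(h)$ (or $h$ composed with a truncation) so that each approximate problem
\[
(u_n)_t - \Delta_p u_n + h_n(u_n) = \mu_n \quad\text{in } Q,
\]
with initial datum $u_{0,n}$ and zero lateral boundary data, has a weak solution $u_n\in W$ by classical theory. The sign condition $h(s)s\ge0$ for $|s|>L$ gives, upon testing with $T_k(u_n)$ and with $\tfrac1\eta T_\eta(G_{k}(u_n))$-type functions, the usual a priori estimates: $T_k(u_n)$ bounded in $L^p(0,T;W^{1,p}_0(\Omega))$ uniformly in $n$, $u_n$ bounded in $L^\infty(0,T;L^1(\Omega))$, equi-integrability of $h_n(u_n)$ in $L^1(Q)$ (this is where the sign condition is crucial, via a Vitali argument using the decay of measure of $\{|u_n|>k\}$), and the key point that the measures $\lambda_{k,n} := (T_k(u_n))_t - \Delta_p(T_k(u_n)) - \mu_n$ are bounded in $\cM(Q)$ with a bound that tends to $0$ as $k\to\infty$, uniformly in $n$ — this is exactly Theorem~\ref{stimcap} applied to $u_n$ (whose data are uniformly bounded). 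Then I would pass to the limit: compactness (à la Boccardo--Murat--Puel / the parabolic compactness arguments of \cite{dpp,DP,BlMu}) gives $u_n\to u$ a.e., $T_k(u_n)\to T_k(u)$ weakly in $L^p(0,T;W^{1,p}_0)$ and a.e., hence strongly by the standard monotonicity/energy argument, so $-\Delta_p(T_k(u_n)) \to -\Delta_p(T_k(u))$ in the appropriate sense; together with $h_n(u_n)\to h(u)$ in $L^1(Q)$ (from equi-integrability and a.e. convergence), one gets that $u$ is a distributional solution and that $\lambda_{k} := (T_k(u))_t - \Delta_p(T_k(u)) - (\mu - h(u))$ satisfies $\|\lambda_k\|_{\cM(Q)}\to0$, i.e. $u$ is a renormalized solution of \eqref{lot} in the new sense. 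One should also check the initial datum is attained in the $C([0,T];L^1(\Omega))$ sense, which follows from the time-compactness estimate on $T_k(u_n)$.

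\medskip

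\textbf{Uniqueness (when $h$ is nondecreasing).} Here I would take two renormalized solutions $u$, $\ut$ and compare them. The natural test object is $T_k(u) - T_k(\ut)$, or better, a regularization of $\mathrm{sign}(u-\ut)$ obtained through the truncated/renormalized equations \eqref{k1}. Using that $(T_k(u))_t - \Delta_p(T_k(u)) = \mu - h(u) + \lambda_k$ and similarly for $\ut$, subtracting and testing with (a Landes-type time-regularization of) $\tfrac1\eta T_\eta(T_k(u)-T_k(\ut))$, one exploits: monotonicity of $-\Delta_p$ to get a nonnegative principal-part contribution; monotonicity of $h$ to get that the absorption terms contribute with the right sign; and the fact that $\|\lambda_k\|_{\cM(Q)} + \|\tilde\lambda_k\|_{\cM(Q)}\to 0$ to kill the error terms in the limit $k\to\infty$. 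Sending $\eta\to0$ then $k\to\infty$ yields $\|(u-\ut)(t)\|_{L^1(\Omega)} \le \|(u-\ut)(0)\|_{L^1(\Omega)} = 0$, hence $u=\ut$. The absorption term $h(u)-h(\ut)$, having the same sign as $u-\ut$, is in fact a helpful (dissipative) term, as in the elliptic case.

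\medskip

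\textbf{Main obstacle.} The delicate point is the passage to the limit in the nonlinear diffusion together with the control of the absorption term without any growth restriction on $h$: one must simultaneously show strong $L^1$ convergence of $h_n(u_n)$ (which requires equi-integrability, hence a uniform control on the measure of superlevel sets $\{|u_n|>k\}$, provided precisely by Theorem~\ref{stimcap}) and the a.e.\ convergence of the truncated gradients $\nabla T_k(u_n)$. In the uniqueness proof, the analogous technical core is making the time-derivative terms rigorous — the usual issue with renormalized parabolic solutions — which forces the use of a Landes-type time regularization and a careful bookkeeping of the error measures $\lambda_k$; establishing that these error terms genuinely vanish in the limit (rather than merely being bounded) is where the new formulation \eqref{k1}--\eqref{k2} does its essential work, replacing the decomposition \eqref{cap3c1} that fails to control $h$.
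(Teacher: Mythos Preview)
Your overall architecture---approximate, get uniform estimates, pass to the limit, and for uniqueness run an $L^1$-contraction via the renormalized equations---matches the paper's. But there is a real gap in your equi-integrability argument for $h_n(u_n)$, and it is precisely the place where the paper's new machinery enters.

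You write that equi-integrability of $h_n(u_n)$ follows from ``the decay of measure of $\{|u_n|>k\}$'', and later that this control is ``provided precisely by Theorem~\ref{stimcap}''. Neither statement is quite right. Testing with (a version of) ${\rm sign}(u_n)\chi_{\{|u_n|>k\}}$ gives
\[
\int_{\{|u_n|>k+1\}} |h(u_n)| \le \int_{\{|u_n|>k\}} |\mu_n| \; + \; \text{(lower order)},
\]
and the issue is the right-hand side. Since $\mu_n$ approximates a genuine measure, the sequence $(\mu_n)$ is \emph{not} equi-integrable in $L^1(Q)$, so knowing that the Lebesgue measure of $\{|u_n|>k\}$ is uniformly small does not force $\int_{\{|u_n|>k\}}|\mu_n|$ to be small. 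What Theorem~\ref{stimcap} actually gives is that the \emph{$p$-parabolic capacity} of $\{|u_n|>k\}$ is uniformly small. This is useful only once you know that $(\mu_n)$ is \emph{equidiffuse} (Definition~\ref{def-diff}): uniformly small on sets of small capacity. The paper secures this by the specific choice $\mu_n=\rho_n*\mu$ and Proposition~\ref{equi}, which shows that convolutions of a diffuse measure are equidiffuse. Your proposal mentions convolution only as one option among ``standard truncation/mollification of measures''; but an arbitrary $L^1$-approximation of $\mu$ need not be equidiffuse, and without that property the argument collapses. The same mechanism (capacitary decay plus equidiffuse data) is also what forces $\|\lambda_{k,n}\|_{\cM(Q)}\to0$ uniformly in $n$; Theorem~\ref{stimcap} alone does not give this, contrary to what you assert.

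Your uniqueness sketch is essentially the paper's Theorem~\ref{contrazione} (adapted with the absorption term, cf.\ Theorem~\ref{contrazione2}): subtract the renormalized equations, test with a Steklov-regularized $\tfrac1\varepsilon T_\varepsilon(T_k(u)-T_k(\tilde u))^+$, use monotonicity of $-\Delta_p$ and of $h$, and send $\varepsilon\to0$, then $k\to\infty$ using $\|\lambda_k\|+\|\tilde\lambda_k\|\to0$. That part is fine.
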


The proof of Theorem~\ref{thm-lot} strongly relies on the new ingredients developed so far. 
The existence of a solution $u$ is obtained  as limit of solutions $u_n$ corresponding to a smooth approximation  $\mu_n$ of the measure $\mu$. In this procedure the difficult point is to prove the $L^1$-convergence of the lower order term $h(u_n)$. When the sequence $(\mu_n)$ is strongly (or even weakly) converging in $L^1(Q)$, this is usually deduced (see e.g. \cite{GM}) from the estimate
\be\label{equi-cap}
\int\limits_{\{|u_n|>k\}} |h(u_n)| \leq \int\limits_{\{|u_n|>k\}} |\mu_n|,
\ee
using the equi-integrability of $(\mu_n)$. In our case we extend this idea in the following sense: the level sets $\{|u_n|>k\}$ are proved to have uniformly small
capacity (Theorem~\ref{stimcap}) and the sequence $(\mu_n)$ is chosen to be equidiffuse (see Definition~\ref{def-diff} below), a concept introduced in \cite{BrePon:05a}; see also \cite{MarPon:08}. An example of such sequence is given by the convolution $\mu_n = \rho_n * \mu$, where the measure $\mu$ is diffuse (see Proposition~\ref{equi} below).
Equidiffuse sequences play the same role for the capacity, as do equi-integrable sequences for the Lebesgue measure. Therefore, coupling the capacitary estimates with the equidiffuse property of $(\mu_n)$, the right-hand side of \eqref{equi-cap} is uniformly small, implying the $L^1$-convergence of $(h(u_n))$.
Finally,  when $h$ is nondecreasing, we obtain uniqueness by proving that  the $L^1$-contraction property holds for  renormalized solutions.

\vskip1em
The article is organized in the following way.
In Section 2, we prove Theorem~\ref{stimcap}. In Section 3,  we discuss some properties of diffuse measures related to the representation \eqref{cap3c1} and we prove Theorem~\ref{app}. In Section 4, we discuss our new  renormalized formulation of problem \eqref{appeq-new}: definition,  existence and uniqueness and some properties of renormalized solutions, including a generalized version of the capacitary estimates of Theorem \ref{stimcap} (see Proposition \ref{tm}).
In Section 5 we turn our attention to problem \eqref{lot} and we prove  Theorem~\ref{thm-lot}. These results will be actually proved in the context of nonlinear monotone operators in divergence form.  We will briefly sketch in Section 6 some extension to the case of nonmonotone operators.

Part of the results of Sections 2 and 3  were announced (and proved for the case of positive measures)  in \cite{ppp1},  nevertheless we include here all the details and give  a self contained exposition for the sake of clarity.

\section{Capacitary estimates: proof of Theorem~\ref{stimcap}}

Let us first recall the following fact that will be used later. A set $E\subset Q$ is called cap-quasi open if for every $\vep>0$ there exists an open set $A_\vep\subset Q $ such that $E\subset A_\vep$ and $\capp(A_\vep\setminus E)<\vep$.  The interest in such sets  arises since,  if a function $z$ has a cap-quasi continuous representative, then its  level sets $\{ (x,t)\in Q: z(x,t)>a\}$ are cap-quasi open. It is well known that the capacity of cap-quasi open sets can be estimated as follows

\begin{lemma}\label{capquasi}
Let $E$ be a cap-quasi open subset in $Q$. Then
$$
\capp(E)\leq \inf{ \Big\{\|v\|_{W} : v\in W,\ v \geq \chi_{E}\  \text{a.e. in}\  Q \Big\}}
$$
\end{lemma}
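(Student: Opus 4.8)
The plan is to deduce the bound for the cap-quasi open set $E$ from the defining capacity estimate for \emph{open} sets, exploiting the fact that $E$ sits inside an open set up to an error of arbitrarily small capacity. First I would fix an admissible competitor $v\in W$ with $v\geq\chi_{E}$ a.e.\ in $Q$, and observe that, since $\chi_{E}\geq 0$ everywhere, this already forces $v\geq 0$ a.e.\ in $Q$, while $v\geq 1$ a.e.\ on $E$. It then suffices to prove $\capp(E)\leq\|v\|_{W}$ and take the infimum over all such $v$.

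Next, fix $\delta>0$. By definition of cap-quasi open set there is an open set $A_{\delta}\subset Q$ with $E\subset A_{\delta}$ and $\capp(A_{\delta}\setminus E)<\delta$; since this last capacity is an infimum over open supersets, one may further choose an open set $B_{\delta}\supset A_{\delta}\setminus E$ with $\capp(B_{\delta})<\delta$, and hence, by the definition \eqref{1} of $\capp$ on the open set $B_{\delta}$, a function $z_{\delta}\in W$ with $z_{\delta}\geq\chi_{B_{\delta}}$ a.e.\ in $Q$ and $\|z_{\delta}\|_{W}<\delta$. In particular $z_{\delta}\geq 0$ a.e.\ in $Q$ and $z_{\delta}\geq 1$ a.e.\ on $A_{\delta}\setminus E$. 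Now set $w_{\delta}:=v+z_{\delta}\in W$, so that $\|w_{\delta}\|_{W}\leq\|v\|_{W}+\delta$. I claim $w_{\delta}\geq\chi_{A_{\delta}}$ a.e.\ in $Q$: a.e.\ on $E$ one has $v\geq 1$ and $z_{\delta}\geq 0$, hence $w_{\delta}\geq 1$; a.e.\ on $A_{\delta}\setminus E$ one has $v\geq 0$ and $z_{\delta}\geq 1$, hence again $w_{\delta}\geq 1$; and $A_{\delta}=E\cup(A_{\delta}\setminus E)$. Since $A_{\delta}$ is open, $w_{\delta}$ is an admissible competitor for $\capp(A_{\delta})$, so that $\capp(E)\leq\capp(A_{\delta})\leq\|w_{\delta}\|_{W}\leq\|v\|_{W}+\delta$, using $E\subset A_{\delta}$. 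Letting $\delta\to 0$ yields $\capp(E)\leq\|v\|_{W}$, and taking the infimum over $v$ finishes the proof.

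I do not expect a genuine obstacle here; the only points requiring care are that $A_{\delta}\setminus E$ need not be open (which is exactly why the auxiliary open set $B_{\delta}$ is introduced), and that signs must be tracked so that $v+z_{\delta}$ dominates $\chi_{A_{\delta}}$ with a controlled $W$-norm. Note in particular that no truncation of $v$ is needed: the admissibility constraint $v\geq\chi_{E}$ already guarantees $v\geq 0$, so $v+z_{\delta}$ is automatically $\geq 1$ wherever $z_{\delta}$ is, and $\geq 1$ on $E$ where $v$ itself is; the vector-space structure of $W$ (closure under sums, triangle inequality for $\|\cdot\|_{W}$) supplies everything else.
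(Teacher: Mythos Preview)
Your proof is correct and follows essentially the same approach as the paper's: fix a competitor $v$, use the cap-quasi openness to get an open $A_\delta\supset E$ with small $\capp(A_\delta\setminus E)$, cover $A_\delta\setminus E$ by an open set of small capacity, pick a $W$-function dominating its indicator with small norm, add it to $v$, and use the definition of capacity on an open set. The only cosmetic difference is that the paper applies the definition of capacity to the slightly larger open set $A_\delta\cup B_\delta$ rather than to $A_\delta$ itself; your observation that $A_\delta$ already suffices (since $A_\delta\setminus E\subset B_\delta$) is correct and makes the argument marginally tidier.
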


\begin{proof}
Let $\vep>0$ and $A_\vep$ be an open set such that $E\subset A_\vep$ and $\capp(A_\vep\setminus E)<\vep$. By definition of capacity,  there exists another open set $U_\vep$ such that $A_\vep\setminus E\subset U_\vep$ and $\capp(U_\vep)<\vep$. Then, let $w_\vep\in W$ such that $w_\vep\geq \chi_{U_\vep}$ a.e. and $\|w_\vep\|_{W}\leq \vep$.  Consider the open set $A_\vep\cup U_\vep$; 
for any $v\geq \chi_E$ a.e. we have 
$v+w_\vep\geq \chi_{U_\vep\cup A_\vep}$ a.e. in $Q$, hence by \eqref{1}
$$
\capp(U_\vep\cup A_\vep)\leq \|v\|_W+ \|w_\vep\|_W\leq \|v\|_W+\vep.
$$
We deduce that
$$
\capp(E)\leq \capp(A_\vep\cup U_\vep)\leq \|v\|_W+\vep,
$$
and letting $\vep\to0$ we get
$$
\capp(E)\leq \|v\|_W.
$$
Since $v$ is arbitrary we conclude.
\end{proof}

Throughout this paper we consider a sequence of mollifiers $(\rho_n)$ such that for every $n \ge 1$, 
\begin{equation}\label{2.1a}
\rho_n \in C_c^\infty(\RR^{N+1}), \quad \supp{\rho_n} \subset B_{\frac{1}{n}}(0), \quad \rho_n \ge 0 \quad \text{and} \quad \int_{\RR^{N+1}} \rho_n = 1.
\end{equation}
Given $\mu \in \cM(Q)$, we define the convolution $\rho_n * \mu$ for every $(t, x) \in \RR \times \RR^{N}$ by
\[
(\rho_n * \mu)(t, x) = \int_Q \rho_n(t-s, x-y) \, d\mu(s, y).
\]

Before proving Theorem~\ref{stimcap}, we recall that if $u\in W$, then \(u\) is a weak solution of   \rife{appeq} if
\begin{equation}
\label{2.2}
\int_0^T \langle u_t, v\rangle\,dt+ \int_Q |\nabla u|^{p-2}\nabla u\nabla v\,dxdt = \int_0^T \langle \mu, v\rangle\,dt\quad \forall v\in W,
\end{equation}
where $\langle \cdot, \cdot \rangle$ denotes the duality between $V$ and $V'$. 

 If \(\mu \in \mathcal{M}(Q)\cap\pw-1p'\), then \eqref{2.2}  holds for every \(v \in L^{p}(0,T;V)\), and actually  for every \(r \in [0, T]\) we have
\begin{equation}
\label{2.3}
\int_0^r \langle u_t, v\rangle\,dt+ \int_0^r\int_\Omega |\nabla u|^{p-2}\nabla u\nabla v\,dxdt = \int_0^r \langle \mu, v\rangle\,dt\quad \forall v\in W,
\end{equation}
for every \(v \in L^{p}(0,T;V)\). 

A useful identity we shall use is the following: if \(u \in W\), then
\be\label{parts}
\int_s^t \langle u_t, \psi'(u)\rangle\,dt= \into \psi(u(t))\,dx - \into \psi(u(s))\,dx,
\ee
for every $s,t\in [0,T]$ and every function $\psi : \RR \to \RR$ such that $\psi'$ is Lipschitz continuous and $\psi'(0)=0$.
Indeed, since $V=\sob \cap \elle 2$, we have $L^{p'}(0,T;V')= L^{p'}(0,T;\duale)+ L^{p'}(0,T;\elle 2)$. Using the density of $C_c^\infty([0,T]\times \Omega)$ in $W$ (see e.g.~\cite[Theorem~2.11]{dpp}) and the embedding $W\subset C^0([0,T];\elle2)$, one obtains \eqref{parts}.

\begin{proof}[Proof of Theorem~\ref{stimcap}]  
We define, for any positive $k$, 
$$
T_{k}(s)=\max{\big\{{-k}, \text{min}\{k,s\} \big\}}  \quad \forall s \in \RR.
$$ 
We divide the proof into a few steps.

\vskip0.3em
\noindent
\emph{Step $1$.} Estimates of $T_k(u)$ in the space $\lil2\cap\psob$.  

\vskip0.5em

For every \(\tau \in \RR\), let 
\[
\Theta_k(\tau) =\int_{0}^{\tau} T_k(\sigma) \, d\sigma.
\]
Take \(r \in [0, T]\). Applying \eqref{2.3} with \(v = T_k(u)\) and \eqref{parts} with  \(\psi = \Theta_k\), \(s= 0\) and \(t = r\), we have
$$
\into \Theta_k (u)(r) \, dx + \int_{0}^{r} \!\! \into |\nabla \tku |^p \, dx\, dt \leq k\|\mu\|_{\mathcal{M}(Q)}+ \into \Theta_k(u_0) \, dx,
$$
Observing that  $\frac{T_k (s)^2}{2}  \leq \Theta_k(s)\leq k|s|$, $\forall s \in\mathbb{R}$, we have
\be\label{stimar}
\into \frac{[T_k (u)(r)]^2}{2} \, dx + \int_{0}^{r} \!\! \into |\nabla \tku |^p \, dx\, dt \leq k\left( \|\mu\|_{\mathcal{M}(Q)}+\|u_0\|_{\elle1}\right)
\ee
for any $r\in [0,T]$. In particular, we deduce
\begin{equation} \label{sti1}
\|T_k(u)\|_{\lil2}^{2}\leq 2kM \quad \text{and} \quad \|T_k(u)\|_{\psob}^{p}\leq k M ,
\end{equation}
where
\be\label{defM}
M=\|\mu\|_{\mathcal{M}(Q)}+\|u_0\|_{\elle1}.
\ee

\vskip0.3em
\noindent
\emph{Step $2$.} Estimates in $W$.
\vskip0.5em

In order to deduce some estimate in $W$, we use  an idea from  \cite{pierre1}.  By standard results (see \cite{l}), there exists a unique  solution $z\in \lil2\cap\psob$  of the backward problem
\begin{equation}\label{retz}
\begin{cases}
    -z_t-\Delta_{p} z=-2\Delta_p \tku & \text{in}\ Q,\\
 z=\tku  & \text{on}\ \{T\} \times \Omega,\\
 z=0 &\text{on}\ (0,T)\times\partial\Omega.
  \end{cases}
\end{equation}
Let us multiply \eqref{retz} by $z$ and integrate between $\tau$ and $T$. Using Young's inequality
we obtain
\begin{equation*}\label{sti3}
\dys\into \frac{[z (\tau)]^{2}}{2}\, dx +  \frac{1}{2} \int_\tau^T \!\! \int_\Omega |\nabla z|^p \, dx dt
\leq\dys\into \frac{\big[\tku(T)\big]^2}{2}\, dx+C \int_\tau^T \int_\Omega |\nabla T_k(u)|^p \, dx dt
\end{equation*}
for every   $\tau\in [0,T]$. Using \eqref{stimar} with $r=T$ we deduce
$$
\dys\into \frac{[z (\tau)]^{2}}{2}\, dx + \frac{1}{2} \int_\tau^T \!\! \int_\Omega |\nabla z|^p \, dx dt 
\leq\dys  C k  \left( \|\mu\|_{\mathcal{M}(Q)}+\|u_0\|_{\elle1}\right) = CkM
$$
for every $\tau\in [0,T]$. This implies
\be\label{stizp2}
\|z \|_{L^\infty(0,T;\elle2)}^2 + \|z \|_{L^p(0,T;W^{1,p}_0(\Omega))}^p
\leq\dys  C k  M.
\ee
Recall that $V=\sob\cap \elle2$; thus,
$$
\|z\|_{L^p(0,T;V)}^p\leq C\left(  \|z \|_{L^p(0,T;W^{1,p}_0(\Omega))}^p +\|z \|_{L^p(0,T;\elle2)}^p  \right).
$$
We deduce from \eqref{stizp2} that
\be\label{stiz}
\|z\|_{L^p(0,T;V)}\leq C \left[ (kM)^{\frac1p}+ (kM)^{\frac12}\right].
\ee
Moreover, the equation in \eqref{retz} implies
\begin{equation*}\label{sti4}
\|z_t\|_{\pw-1p'}\leq C\left(\|z\|_{\psob}^{p-1} +\|\tku\|_{\psob}^{p-1}\right).
\end{equation*}
Hence, using \eqref{sti1} and \eqref{stizp2} we deduce
\be\label{stizt}
\|z_t\|_{\pw-1p'}\leq C \left( kM\right)^{\frac1{p'}} .
\ee
Combining \eqref{stiz} and \eqref{stizt} we conclude that
\begin{equation}\label{dw}
\|z\|_{W} \leq C\max\{(kM)^{\frac{1}{p}},(kM)^{\frac{1}{p'}}\},
\end{equation}
where $M$ is defined in \eqref{defM}.

\vskip0.3em
\noindent
\emph{Step $3$.} Proof completed for nonnegative data. 
\vskip0.5em
Let us assume that $\mu \geq 0$ and $u_0\geq 0$; hence we have  $u_t - \Delta_p u \geq 0$, and $u \geq 0$ in $Q$.  We claim that
\be\label{kato}
(\tku)_t-\Delta_p \tku \geq 0
.
\ee
To prove \eqref{kato}, we consider the following smooth approximation of $\tku$: let us fix $\delta>0$ and define \(S_{k,\delta} : \RR \to \RR\) by
\begin{equation}\label{skd}
\dys S_{k,\delta} (s)=
\begin{cases}
1 & \text{if } |s|\leq k,\\
0&\text{if }|s| > k+\delta,\\
\mathrm{affine} & {\rm otherwise}, 
\end{cases}
\end{equation}
and finally let us denote by $T_{k,\delta} : \RR \to \RR$ the primitive function of $S_{k,\delta}$, that is
\begin{equation}\label{tkd}
T_{k,\delta}(s)=\int_{0}^{s}S_{k,\delta}(\sigma)\, d\sigma;
\end{equation}
notice that $T_{k,\delta}(s)$ converges pointwise to $T_{k}(s)$ as $\delta$ goes to zero.

Let $\varphi\in C_c^\infty(Q)$ be a nonnegative function, and take $T_{k,\delta}'(u)\varphi$ as test function in \eqref{2.2}. We obtain, using that $\mu\geq0$ and that  $T_{k,\delta}(s)$ is concave for $s\geq 0$,  
$$
\begin{array}{l}
\dys -\int_0^T \varphi_t T_{k,\delta}(u) \, dt + \int_Q |\nabla u|^{p-2}\nabla u\cdot\nabla \varphi S_{k,\delta}(u)  \, dxdt\geq 0,
\end{array}
$$  
which yields \eqref{kato} as $\delta$ goes to $0$.

Combining \eqref{retz} and \eqref{kato} we obtain
\begin{equation}\label{2.1}
-z_t-\Delta_{p} z \geq -(\tku)_t-\Delta_p \tku .
\end{equation}
Since  both $z$ and $\tku$ belong to $\psob$, a  standard comparison argument (multiply both sides of \eqref{2.1} by  $(z-\tku)^-$) allows us to conclude that $z \geq \tku$ a.e.\@ in $Q$. In particular, $z\geq k$ a.e.\@ on $\{u>k\}$. On the other hand, since $u$ belongs to $W$, it has a   unique cap-quasicontinuous representative (still denoted  by $u$),  hence  the set $\{u>k\}$ is cap-quasi open and its  capacity  can be estimated with Lemma~\ref{capquasi}. Therefore,  we get
$$
\capp(\{ u > k\}) \leq \left\|\frac{z}{k}\right\|_W.
$$
Using \eqref{dw} we obtain \eqref{stim}.

\vskip0.3em
\noindent
\emph{Step $4$.} Comparison with $\mu^+$ and $\mu^-$  when \(\mu\) is a smooth function.
\vskip0.5em
Let us consider the case where $\mu\in C^\infty(\overline Q)$. 
Then,  \(\mu^+ \in \cM(\Omega) \cap L^{p'}(0, T; W^{-1, p'}(\Omega))\) and we can consider the unique solution $v\in W$ of the problem
\begin{equation*}\label{pb+}
\begin{cases}
    v_t-\Delta_{p} v=\mu^+ & \text{in}\ Q,\\
v=u_0^+  & \text{on}\ \{0\} \times \Omega,\\
v=0 &\text{on}\ (0,T)\times\partial\Omega.
  \end{cases}
\end{equation*}
 By comparison  principle we have $v\geq u$. Using Step $3$, we deduce that there exists  a nonnegative function $z\in W$ such that 
\[
z\geq T_k(v)\geq {T_k(u)}
\]
and 
\[\label{tow}
\|z\|_W\leq C\max{ \big\{k^{\frac{1}{p}},k^{\frac{1}{p'}}\big\}},
\]
where $C=C(\|\mu\|_{\mathcal{M}(Q)}, \|u_0\|_{\elle1},p)$.
Similarly, using the solution of \eqref{appeq} with data $-\mu^-$ and $-u_0^-$,  we deduce that there exists  a nonnegative function $w\in W$ such that
\[
T_k(u)\geq -w
\]
and 
\[\label{tow-new}
\|w\|_W\leq C\max{\big \{k^{\frac{1}{p}},k^{\frac{1}{p'}}\big\}}.
\]
We have thus proved that there exist two nonnegative functions $z$, $w\in W$ such that
$$
-w\leq T_k(u)\leq z \quad \text{and} \quad \|z\|_W+\|w\|_W \leq C\max\{k^{\frac{1}{p}},k^{\frac{1}{p'}}\},
$$
where $C$ depends on $\|\mu\|_{\mathcal{M}(Q)}$, $\|u_0\|_{\elle1}$  and $p$.

\vskip0.3em
\noindent
\emph{Step $5$.} Proof completed. 
\vskip0.5em
 Let us fix $\theta \in C_c^\infty(Q)$ and set $\tilde \mu=\theta\mu$. By  standard properties of convolutions (see e.g.~\cite[Lemma~2.25]{dpp}), given a sequence of mollifiers  $(\rho_n)$ we have $\rho_n\ast\tilde \mu\in C^\infty_c(Q)$,
 \[
 \rho_n\ast\tilde \mu\to \tilde \mu \quad \text{strongly in $\pw-1p'$}
 \]
 and
 \[
 \|\rho_n\ast\tilde \mu\|_{\mathcal{M}(Q)}\leq \|\tilde \mu\|_{\mathcal{M}(Q)}\leq  \| \mu\|_{\mathcal{M}(Q)}.
 \]
Take now $(\theta_j)$ to be a  sequence of $C_c^\infty(Q)$ functions such that $\theta_j \uparrow 1$, and 
consider the solutions $u_{j,n}$ of the problem
\be\label{jn}
\begin{cases}
    (u_{j,n})_t-\Delta_{p} u_{j,n}=\rho_n\ast(\theta_j\mu) & \text{in}\ Q,\\
u_{j,n}=u_0  & \text{on}\ \{0\} \times \Omega,\\
u_{j,n}=0 &\text{on}\ (0,T)\times\partial\Omega.
  \end{cases}
\ee
As $n\to \infty$,  the sequence $(u_{j,n})$ converges in $\psob$ to  the solution $u_j$ of \eqref{appeq} with $\theta_j\mu$ as datum. Next,   as $j\to +\infty$, 
\[
u_j \to u \quad \text{in $L^\infty(0, T; L^1(\Omega))$.}
\]
This is a consequence of a  standard $L^1$-contraction argument. Indeed,  
subtracting equations \eqref{appeq} and \eqref{jn}, and 
taking $T_1(u_{j,n}-u)$ as test function, we get (note that both $u_{j,n}$ and $u$ belong to $W$),
\begin{multline*}
 \into |u_{j,n}-u|(t) \, dx \\
\leq C \| \rho_n\ast(\theta_j\mu)-\theta_j\mu\|_{\pw-1p'}\|T_1(u_{j,n}-u)\|_{\psob}\\
  + C  \into T_1(u_{j,n}-u)(\theta_j-1)\,d\mu,
\end{multline*}
which yields
\begin{multline*}
\| (u_{j,n}-u)(t)\|_{\elle1}\\
\leq C \| \rho_n\ast(\theta_j\mu)-\theta_j\mu\|_{\pw-1p'}\|T_1(u_{j,n}-u)\|_{\psob} \\
+ C \|(1-\theta_j)\mu\|_{\mathcal{M}(Q)}.
\end{multline*}
Since for $j$ fixed $u_{j,n}$ is bounded in $\psob$, as $n\to +\infty$
the first term in the right-hand side tends to \(0\), hence
$$
\| (u_{j}-u)(t)\|_{\elle1}\leq C \|(1-\theta_j)\mu\|_{\mathcal{M}(Q)}.
$$
Since the latter term tends to zero as $j\to \infty$ by dominated convergence, we deduce the convergence of  $u_j$  to $u$. 

By Step $4$, there exist nonnegative functions $z_{j,n}$ and ${w}_{j,n}$ such that
$$
- {w}_{j,n}\leq    T_k(u_{j,n})\leq z_{j,n}
$$
and
$$
\|z_{j,n}\|_W+\| {w}_{j,n}\|_W\leq C\max{\big \{k^{\frac{1}{p}},k^{\frac{1}{p'}}\big\}},
$$
where $C=C(\|\rho_n\ast(\theta_j\mu)\|_{\mathcal{M}(Q)}, \|u_0\|_{\elle1},p)$. Since 
\[
\|\rho_n\ast(\theta_j\mu)\|_{\mathcal{M}(Q)}\leq \|\mu\|_{\mathcal{M}(Q)},
\]
the constant $C$ can be chosen independently of $n$ and $j$. The sequences $(z_{j,n})$ and $({w}_{j,n})$ being bounded in $W$, they converge weakly up to  subsequences to nonnegative functions $z, {w} \in W$ and almost everywhere in $Q$. Thus,
\[\label{quasi}
- {w}\leq   T_k(u)\leq z\quad\hbox{a.e. in $Q$}
\]
and 
$$
\|z\|_W+\| {w}\|_W\leq C\max\{k^{\frac{1}{p}},k^{\frac{1}{p'}}\},
$$
where $C=C(\|\mu\|_{\mathcal{M}(Q)}, \|u_0\|_{\elle1},p)$.  
Since $u\in W$, it admits a uniquely defined  cap-quasi continuous representative, hence the sets $\{u>k\}$ and $\{u<-k\}$ are cap-quasi open. 
Using Lemma~\ref{capquasi}, we get
$$
\dys\capp(\{ |u| > k\}) \leq     \capp(\{u >k\})+   \capp(\{u < - k\})\leq \left\|\frac{z}{k}\right\|_W+ \left\|\frac{{w}}{k}\right\|_W
$$
which yields  \eqref{stim}.
\end{proof}

\vskip1em
The same argument as above still holds for more general nonlinear operators. Consider for example the problem
\begin{equation}\label{nonlinu}
\begin{cases}
    u_t-\dive(a(t,x,\nabla u))=\mu & \text{in}\ Q,\\
u=u_0  & \text{on}\ \{0\} \times \Omega,\\
u=0 &\text{on}\ (0,T)\times\partial\Omega,
  \end{cases}
\end{equation}
where  $a : Q \times \rn \to \rn$ is a Carath\'eodory function (i.e., $a(\cdot,\cdot,\xi)$
is measurable on $Q$ for every $\xi$ in $\rn$, and $a(t,x,\cdot)$ is
continuous on $\rn$ for almost every $(t,x)$ in $Q$), such that the
following holds:
\be
a(t,x,\xi) \cdot \xi \geq \al|\xi|^p,
\label{coercp}
\ee
\be
|a(t,x,\xi)| \leq \beta[b(t,x) + |\xi|^{p-1}],
\label{cont}
\ee
\be
[a(t,x,\xi) - a(t,x,\eta)] \cdot (\xi - \eta) > 0,
\label{monot}
\ee
for almost every $(t, x)$ in $Q$, for every $\xi, \eta \in \rn$, with
$\xi \neq \eta$, where $p > 1$,
$\al$ and $\beta$ are two positive constants, and
$b$ is a nonnegative function in $L^{p'}(Q)$.

\vskip1em
We obtain in a similar  way the following capacitary estimate:

\begin{theorem}\label{stimcap2}
Assume that \eqref{coercp}--\eqref{monot} hold. Given  $u_0\in \elle2$ and $\mu \in \mathcal{M}(Q)\cap\pw-1p'$, let $u\in W$ be the (unique) weak solution of \eqref{nonlinu}. Then,
$$
\capp(\{|u| > k\})\leq C\max\left\{\frac{1}{k^{\frac{1}{p}}},\frac{1}{k^\frac{1}{p'}}\right\} \quad \forall k \ge 1,
$$
where $C >0$ is a constant depending on $\|\mu\|_{\mathcal{M}(Q)}$, $\|u_0\|_{\elle1}$, $\|b\|_{L^{p'}(Q)}$, $\alpha$, $\beta$ and $p$.
\end{theorem}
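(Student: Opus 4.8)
The plan is to reprove Theorem~\ref{stimcap2} by reexamining the five-step argument in the proof of Theorem~\ref{stimcap}, checking that each step only uses the structural conditions \eqref{coercp}--\eqref{monot} rather than the specific form $a(t,x,\xi)=|\xi|^{p-2}\xi$. \emph{Step $1$} goes through verbatim once we choose $v=T_k(u)$ in the weak formulation: by \eqref{coercp} the gradient term is bounded below by $\alpha\int_0^r\!\!\int_\Omega|\nabla T_k(u)|^p$, so we obtain
\[
\into\frac{[T_k(u)(r)]^2}{2}\,dx+\alpha\int_0^r\!\!\int_\Omega|\nabla T_k(u)|^p\,dx\,dt\leq k\big(\|\mu\|_{\mathcal{M}(Q)}+\|u_0\|_{\elle1}\big),
\]
which gives the analogue of \eqref{sti1} with a constant now depending on $\alpha$. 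For \emph{Step $2$}, the auxiliary backward problem \eqref{retz} is replaced by $-z_t-\dive(a(t,x,\nabla z))=-2\,\dive(a(t,x,\nabla T_k(u)))$ with the same boundary/final data; existence and uniqueness of $z\in\lil2\cap\psob$ still follow from the classical monotone operator theory (Lions), using \eqref{coercp}--\eqref{cont}--\eqref{monot}. Multiplying by $z$ and using \eqref{coercp} to control $\int|\nabla z|^p$ from below and \eqref{cont} together with Young's inequality to absorb the right-hand side (this is where $\|b\|_{L^{p'}(Q)}$ enters the constant), we recover \eqref{stizp2}, hence \eqref{stiz}; the estimate on $z_t$ in $\pw-1p'$ comes from \eqref{cont}, giving $\|z_t\|_{\pw-1p'}\leq C(\|b\|_{L^{p'}(Q)}+\|z\|_{\psob}^{p-1}+\|T_k(u)\|_{\psob}^{p-1})$ and thus \eqref{stizt} and the bound \eqref{dw} on $\|z\|_W$.

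\emph{Step $3$} (nonnegative data) requires the most care. The Kato-type inequality \eqref{kato}, namely $(T_k(u))_t-\dive(a(t,x,\nabla T_k(u)))\geq 0$ when $\mu\geq0$, $u_0\geq0$, is proved exactly as before: test the weak formulation with $T_{k,\delta}'(u)\varphi=S_{k,\delta}(u)\varphi$ for nonnegative $\varphi\in C_c^\infty(Q)$, note that $a(t,x,\nabla u)\cdot\nabla u\,S_{k,\delta}'(u)\leq 0$ because $S_{k,\delta}'\leq 0$ on $\{u\geq0\}$ and $a(t,x,\nabla u)\cdot\nabla u\geq0$ by \eqref{coercp}, and let $\delta\to0$. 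Then $-z_t-\dive(a(t,x,\nabla z))\geq -(T_k(u))_t-\dive(a(t,x,\nabla T_k(u)))$, and the comparison argument — multiplying the difference by $(z-T_k(u))^-\in\psob$ and invoking the strict monotonicity \eqref{monot} — yields $z\geq T_k(u)$ a.e., hence $z\geq k$ a.e.\@ on $\{u>k\}$. Since $u\in W$ has a cap-quasicontinuous representative, $\{u>k\}$ is cap-quasi open and Lemma~\ref{capquasi} gives $\capp(\{u>k\})\leq\|z/k\|_W$; combined with \eqref{dw} this proves the estimate for nonnegative data. \emph{Step $4$} is the comparison with $\mu^+$, $u_0^+$ and with $-\mu^-$, $-u_0^-$: the only subtlety is that the comparison principle for \eqref{nonlinu} still holds under \eqref{coercp}--\eqref{monot} (again by testing with the negative part of the difference), so from the solutions with data $(\mu^+,u_0^+)$ and $(-\mu^-,-u_0^-)$ we sandwich $-w\leq T_k(u)\leq z$ with $\|z\|_W+\|w\|_W\leq C\max\{k^{1/p},k^{1/p'}\}$.

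Finally, \emph{Step $5$} (approximation) carries over unchanged: mollify $\theta_j\mu$ to get smooth data $\rho_n*(\theta_j\mu)\in C_c^\infty(Q)$ with $\|\rho_n*(\theta_j\mu)\|_{\mathcal{M}(Q)}\leq\|\mu\|_{\mathcal{M}(Q)}$ and converging strongly in $\pw-1p'$; the $L^1$-contraction estimate used to pass $u_{j,n}\to u_j\to u$ only uses \eqref{coercp} and \eqref{monot} (test the difference of the two equations with $T_1(u_{j,n}-u)$, the elliptic term being nonnegative by monotonicity); the uniform-in-$n,j$ bounds on the sandwiching functions $z_{j,n}$, $w_{j,n}$ from Step~$4$ pass to weak limits in $W$, and Lemma~\ref{capquasi} applied to $\{u>k\}$ and $\{u<-k\}$ finishes the proof. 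The main obstacle is not any single step but the bookkeeping of constants: one must track that every appearance of \eqref{cont} contributes a factor of $\|b\|_{L^{p'}(Q)}$, $\beta$ (and $\alpha$ from \eqref{coercp}) into the final constant $C$, and that the existence theory for the nonlinear backward problem \eqref{retz}'s analogue genuinely falls within the scope of the classical results under \eqref{coercp}--\eqref{monot}; both are routine but need to be stated. One could therefore present the proof simply by saying ``the proof of Theorem~\ref{stimcap} applies mutatis mutandis, replacing $|\nabla\cdot|^{p-2}\nabla\cdot$ by $a(t,x,\nabla\cdot)$ throughout and using \eqref{coercp}--\eqref{monot} in place of the corresponding properties of the $p$-Laplacian,'' and indicating the three places (lower bound on the diffusion, Young's-inequality absorption producing the $\|b\|_{L^{p'}(Q)}$-dependence, and strict monotonicity in the comparison) where the changes occur.
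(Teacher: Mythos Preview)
Your proposal is correct and follows exactly the approach of the paper, which itself simply states that ``the proof runs exactly as before, replacing $\Delta_p$ with $\dive(a(x,t,\nabla(\cdot)))$ and using in the standard way the coercivity condition \eqref{coercp} (e.g.\ in Step~1) and the growth condition \eqref{cont} (e.g.\ to estimate the right hand side in Step~2).'' Your step-by-step verification is precisely the unpacking of that sentence, with the correct identification of where \eqref{coercp}, \eqref{cont}, \eqref{monot} and the dependence on $\|b\|_{L^{p'}(Q)}$ enter.
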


The proof  runs exactly as before, replacing $\Delta_p$ with $\dive(a(x,t, \nabla (\cdot)))$ and using in the standard way the coercivity condition \eqref{coercp} (e.g. in Step 1) and the growth condition \eqref{cont} (e.g. to estimate the right hand side in Step 2).
\vskip1em
Let us stress that  both Theorem \ref{stimcap} and Theorem \ref{stimcap2} are meant to provide estimates for usual weak solutions, this is why we asked that $\mu\in \pw-1p' $ and that $u_0\in \elle2$ in the statements.  However, the estimate only depends on the norm of $\mu$ as a measure and of $u_0$ in $\elle1$. Indeed, in Section 4 we will extend this result, in a  suitable generalized form (see Proposition \ref{tm}), by considering the larger framework of renormalized solutions.

\section{Properties of diffuse measures and  the approximation result}\label{mes}

The representation result proved in \cite{dpp} states the following:  if $\mu$ is a diffuse measure, then there exist $f\in L^{1}(Q)$, $g\in
L^{p}(0,T;V)$ and $\chi\in \pw-1p'$ such that
\begin{equation}\label{cap3c1-bis}
\mu = f + g_t + \chi \quad \text{in } \mathcal{D'}(Q).   
\end{equation}
The possibility that the above decomposition holds for some $g\in \parelle\infty$ has a  special interest, as  it was also pointed out in \cite{pe2}.
In particular, one has the following counterpart.

\begin{proposition}\label{gr}
Assume that $\mu \in \cM(Q)$ satisfies \eqref{cap3c1-bis}, where $f\in\luq$, $g\in L^p (0,T; V)$ and $\chi\in \pw-1p'$. If $g\in\liq$, then $\mu$ is diffuse. 
\end{proposition}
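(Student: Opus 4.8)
The plan is to show that $\mu$ vanishes on any Borel set $E\subset Q$ with $\capp(E)=0$, by testing the representation \eqref{cap3c1-bis} against cap-quasicontinuous functions. First I would reduce to the case of a compact set: since $\capp$ is inner regular with respect to compact sets for sets of zero capacity (and $|\mu|$ is a finite Radon measure, hence also inner regular), it suffices to prove $\mu(K)=0$ for every compact $K\subset Q$ with $\capp(K)=0$. For such a $K$, by definition of the capacity \eqref{1} together with the fact that $K$ is contained in arbitrarily small open neighborhoods of small capacity, I can pick a sequence $(\psi_j)\subset W$ with $\psi_j\ge\chi_{U_j}$ a.e.\ for open sets $U_j\supset K$ and $\|\psi_j\|_W\to 0$; moreover one can arrange $0\le\psi_j\le 1$ (truncating does not increase the $W$-norm) and, passing to the cap-quasicontinuous representatives, $\psi_j\to 0$ in $W$ and, up to a subsequence, cap-quasi-uniformly, with $\psi_j=1$ cap-quasi-everywhere on a neighborhood of $K$.

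Next I would test the identity $\mu=f+g_t+\chi$ in $\mathcal{D}'(Q)$ against $\psi_j$. This requires a density/approximation argument: since $\psi_j\in W$ is cap-quasicontinuous, it can be approximated (in the $W$-norm and cap-quasi-uniformly) by functions in $C_c^\infty(Q)$ as recalled after \eqref{parts}, so the pairing $\langle\mu,\psi_j\rangle$ makes sense and equals the sum of the three terms. Here one uses crucially that $\mu$, being in $\cM(Q)$, pairs with bounded cap-quasicontinuous functions via $\int_Q \psi_j\,d\mu$ — this is where $0\le\psi_j\le 1$ is needed. Then I would estimate each term on the right:
\[
\left|\int_Q f\psi_j\,dx\,dt\right|\le \int_{\{\psi_j>0\}}|f|\,dx\,dt,\qquad
\left|\int_0^T\langle\chi,\psi_j\rangle\,dt\right|\le \|\chi\|_{\pw-1p'}\|\psi_j\|_{\psob},
\]
and
\[
\int_0^T\langle g_t,\psi_j\rangle\,dt = -\int_Q g\,(\psi_j)_t\,dx\,dt,
\]
so $|\int_0^T\langle g_t,\psi_j\rangle\,dt|\le \|g\|_{L^p(0,T;V)}\|(\psi_j)_t\|_{L^{p'}(0,T;V')}\le \|g\|_{L^p(0,T;V)}\|\psi_j\|_W$. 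The key point — and this is where the hypothesis $g\in\liq$ enters — is that without boundedness of $g$ the term $\int_Q g(\psi_j)_t$ is controlled only by $\|\psi_j\|_W\to 0$, which already suffices; so in fact boundedness of $g$ is used more subtly, namely to make sense of and control the integration by parts and the boundary-in-time contributions. More precisely, I would instead write $\int_0^T\langle g_t,\psi_j\rangle\,dt$ using \eqref{parts}-type manipulations after noting $g\in\liq\cap\psob\subset L^p(0,T;V)$, and bound it by $\|g\|_{\liq}\|(\psi_j)_t\|_{L^1(0,T;V')}$-style estimates — the precise routing of the estimate is a calculation I will not grind through here.

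Finally, since $\psi_j=1$ cap-quasi-everywhere near $K$ and $\capp(K)=0$, one has $\mu(K)=\int_K\psi_j\,d\mu$, and combining this with $\langle\mu,\psi_j\rangle\to 0$ and the fact that $\int_{Q\setminus K}\psi_j\,d|\mu|\to 0$ (because $\psi_j\to 0$ in $W$ implies $\psi_j\to 0$ cap-quasi-everywhere, hence $|\mu|$-a.e.\ since the only obstruction, $K$, is $|\mu|$-null once we know $\mu$ does not charge it — here one argues by first proving $|\mu|(\{\psi_j>\epsilon\}\setminus K)\to 0$ via capacitary/Radon estimates and dominated convergence) one concludes $\mu(K)=0$. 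The main obstacle I anticipate is precisely the rigorous justification that the pairing $\langle\mu,\psi_j\rangle$ can be split as $\int f\psi_j + \langle g_t,\psi_j\rangle + \langle\chi,\psi_j\rangle$ for $\psi_j\in W$ rather than $\psi_j\in C_c^\infty(Q)$: this demands approximating $\psi_j$ by smooth functions \emph{simultaneously} in the $W$-norm and in a way that controls $\int\cdot\,d\mu$, and it is exactly the boundedness $g\in\liq$ (plus $0\le\psi_j\le1$) that makes the measure-pairing term stable under this approximation. Everything else is a routine estimate once this is in place.
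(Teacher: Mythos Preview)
There is a genuine gap, and it sits exactly where you sense something is off. Your claim that ``truncating does not increase the $W$-norm'' is false: if $\psi\in W$ and $\Phi$ is a bounded $C^2$ function (or a truncation), then in general $\Phi(\psi)\notin W$. The chain rule gives $(\Phi(\psi))_t=\Phi'(\psi)\psi_t$, but multiplying an element of $V'=(W^{1,p}_0\cap L^2)'$ by the $x$-dependent function $\Phi'(\psi)$ does not yield an element of $V'$; one only lands in the larger space $S=\{z\in L^p(0,T;V):\ z_t\in L^{p'}(0,T;W^{-1,p'})+L^1(Q)\}$ (this is precisely Lemma~\ref{tech1} of the paper). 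Consequently, after truncation your estimate $|\langle g_t,\psi_j\rangle|\le\|g\|_{L^p(0,T;V)}\|(\psi_j)_t\|_{L^{p'}(0,T;V')}$ is no longer available, which is why you could not locate where $g\in L^\infty(Q)$ is needed. Conversely, without truncation $\psi_j$ is unbounded and neither $\int f\psi_j$ (with $f\in L^1$ only) nor $\int\psi_j\,d\mu$ is under control.

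The paper resolves this as follows. It first takes the $\psi_n$ to be \emph{smooth} (this is \cite[Proposition~2.14]{dpp}: for a compact $K$ of zero capacity one can choose $\psi_n\in C_c^\infty(Q)$ with $\psi_n\ge\chi_K$ and $\|\psi_n\|_W\to0$), sets $\xi_n=\Phi(\psi_n)$ with $\Phi$ smooth, $\Phi(0)=0$, $0\le\Phi\le1$, $\Phi=1$ on $[1,\infty)$, and then uses the distributional identity directly with the smooth test function $\varphi\xi_n$ ($\varphi$ a cutoff for $K$). The $f$- and $\chi$-terms go to zero routinely. For the $g$-term one writes $\int_Q g\varphi(\xi_n)_t=\langle(\psi_n)_t,\Phi'(\psi_n)g\varphi\rangle$ and estimates $\|\Phi'(\psi_n)g\varphi\|_{L^p(0,T;V)}$ via the product rule, which produces the term $\|\Phi''\|_\infty\|g\varphi\|_{L^\infty(Q)}\|\psi_n\|_{L^p(0,T;V)}$. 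This is exactly where the hypothesis $g\in L^\infty(Q)$ enters. Your proposal never arrives at this computation because the truncation step, as written, is not valid.
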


\begin{proof}
Because of the inner regularity of $\mu$,   it suffices to prove that for any compact set $K\subset Q$ such that $\kap{p}(K)=0$, $\mu(K)=0$.
Now, if $\kap{p}(K)=0$, then by \cite[Proposition~2.14]{dpp} there exists a sequence of functions  $\psi_n \in C^\infty_c (Q)$ such that $\psi_n\geq \chi_K $ and 
$
\|\psi_n \|_W \to 0.$

Take a smooth function $\Phi: \RR\to \RR$ such that $\Phi(0)=0$, $0\leq \Phi(s)\leq 1$, $\Phi(s) = 1$ if $s\geq 1$ and $\Phi'$, $\Phi''$ are  bounded in \(\RR\). If we set $\xi_n=\Phi(\psi_n)$, then $(\xi_n)$ is  a  sequence of smooth functions such that
$\xi_n =1$ on $K$ and $0\leq \xi_n\leq 1$ in $Q$. Moreover, we have
\begin{equation}\label{zs}
\xi_n \to 0\quad \hbox{in $L^p(0,T;V)$.}
\end{equation}
Since $(\xi_n)_t= \Phi'(\psi_n)(\psi_n)_t$, for every \(\phi \in L^\infty(Q) \cap L^p(0, T; V)\) with compact support in \(Q\) we have
\[
\begin{split}
&\left| \int_Q (\xi_n)_t\,\phi\,dxdt\right| 
 \leq \|(\psi_n)_t\|_{L^{p'}(0,T;V')}\|\Phi'(\psi_n)\,\phi\|_{L^{p}(0,T;V)}\\
& \leq \|(\psi_n)_t\|_{L^{p'}(0,T;V')} \big(\|\phi\|_{L^{\infty}(Q)} \|\Phi''\|_{L^{\infty}(\RR)} \|\psi_n\|_{L^p(0, T; V)} +\|\Phi'\|_{L^{\infty}(\RR)}  \|\phi\|_{L^p(0, T; V)} \big).
\end{split}
\]
By the strong convergence of $(\psi_n)$ in $L^p(0,T;V)$ and of $((\psi_n)_t)$ in $L^{p'}(0,T;V')$,
\begin{equation}\label{zs1}
\lim_{n \to \infty}{\int_Q (\xi_n)_t\,\phi \, dxdt} = 0.
\end{equation}
Given $\vare>0$, let $\omega \subset Q$ be an open set such that 
\[
K \subset \omega \quad \text{and} \quad |\mu|(\omega\backslash K)\leq \vare,
\]
and let $\varphi$ be a cut-off function for $K$ whose support is contained in $\omega$.
We have 
$$
\int_{K} \, d\mu = \int_Q \varphi\xi_n \, d\mu - \int_{\omega\backslash K}\varphi \xi_n \, d\mu 
$$
so that
$$
|\mu(K)|\leq \left|\int_Q f\varphi\xi_n \, dxdt -\int_Q g(\varphi \xi_n)_t \, dxdt +\int_0^T \langle \chi,\varphi\xi_n\rangle \right| \, dt + \vare. 
$$
It is easy to check, using \rife{zs}, that both 
$$
\int_Q f\varphi\xi_n \, dxdt \quad \text{and} \quad \int_0^T \langle \chi,\varphi\xi_n\rangle \, dt,
$$
go to zero as $n$ goes to infinity. Moreover,  thanks to  \eqref{zs1} and  since $g\in L^p (0,T, V)\cap\liq$, we deduce that 
$$
\int_Q g  (\varphi \xi_n)_t=\int_Q g(\varphi)_t \xi_n+\int_Q g \varphi  (\xi_n)_t \to 0.
$$
Therefore, as $n\to \infty$, we get $|\mu(K)|\leq \vare$, and since $\vare$ is arbitrary  this concludes the proof.
\end{proof}

There exist diffuse measures  whose \emph{time derivative} part $g$ is essentially unbounded. The following example provides a  typical case.

\begin{example}\label{unb}
\rm
Given $0<t_0 <T$, let 
\[
\mu= \delta_{t_0 }\otimes h ,
\]
where $h \in L^1(\Omega)$. By \cite[Theorem~2.15]{dpp}, $\mu$ is diffuse. We claim that if \eqref{cap3c1-bis} holds for some $g \in L^\infty(Q)$, then 
$h \in L^\infty(\Omega)$.\\
For simplicity, let $p=2$ (the case $p\neq 2$ can be handled in a similar way by using the $p$-Laplacian)   and let $u\in L^2 (t_0,T;\huz)\cap C([t_0,T];\luo)$ be a solution of (see \cite{po}),
\begin{equation*}\label{quad}
\begin{cases}
    u_{t}-\Delta u + |\nabla u|^2 = 0 & \text{in}\ (t_0,T)\times\Omega,\\
    u=h & \text{on}\ \{t_0\} \times \Omega,\\
    u = 0 & \text{on}\ (t_0, T) \times \partial\Omega.
  \end{cases}
\end{equation*}
Denoting by $\tilde u$ the extension of $u$ in $Q$ as identically zero on $(0,t_0) \times \Omega$, then $ \tilde{u}\in\sobl$ and
\be\label{por}
    \tilde{u}_{t}-\Delta \tilde{u} + |\nabla \tilde{u}|^2 =  \mu \quad \text{in}\ Q.
\ee
Let $f\in L^{1}(Q)$, $g\in L^{p}(0,T;V)$ and $\chi\in \pw-1p'$ be such that
\[
\mu = f + g_t + \chi \quad \text{in } \mathcal{D'}(Q).   
\]
Since \rife{por} also provides  a   decomposition of $\mu$, by \cite[Lemma~2.29]{dpp} we have
\[
\tilde{u}-g \in C([0,T];\luo).
\]
Set $w = \tilde{u}-g$. Since $w\in C([0,T];\luo)$, we have for every $\varphi\in C_c^\infty(\Omega)$,
$$
\lim_{t \nearrow t_{0}}\into |w(t,x)\varphi(x)|\, dx=\into |w(t_0,x)\varphi(x)|\, dx.
$$
Since we have, for almost every  \(t \in (0, t_0)\), 
$$
\into |w(t,x)\varphi(x)|\, dx = \int_\Omega |g(t, x) \varphi(x)| \, dxdt \leq \|g\|_{\liq} \|\varphi\|_{\luo},
$$
it follows that  
$$
\into |w(t_0,x)\varphi(x)|\, dx\leq \|g\|_{\liq}\|\varphi\|_{\luo}.
$$
Therefore, $w(t_0, \cdot)\in \elle\infty$ and  
\[
\|w(t_0, \cdot)\|_{\lio}\leq\|g\|_{\liq}.
\]
On the other hand, since $\tilde u = w + g$ on $Q$,
$$
\into |\tilde u(t,x)\varphi(x)| \, dx \le \into |w(t,x)\varphi(x)| \, dx + \|g\|_{\liq}\|\varphi\|_{\luo},
$$
for all $\varphi\in C_c^\infty(\Omega)$. Hence, using the fact that $\tilde{u}=u\chi_{[t_0,T)}$ converges to $h$ as $t  \searrow t_0$, 
$$
\into |h\varphi|\, dx\leq \into|w(t_0,x)\varphi(x)| \, dx + \|g\|_{\liq}\|\varphi\|_{\luo} \leq 2\|g\|_{\liq}\|\varphi\|_{\luo},
$$
which  implies that  $h\in L^\infty(\Omega)$.
\qed
\end{example}

\vskip1em

In view of Example~\ref{unb}, Theorem~\ref{app} is actually the best one can expect, since it shows that the class of measures written as in \eqref{cap3c1-bis} with  $g$ bounded is dense (in the strong topology)  in $\cM_0(Q)$.

Before giving the proof of this result, we  point out  a few more things.  First,  it is often useful  to work with measures compactly supported on the parabolic cylinder. The next lemma provides a tool for such situations. The proof of this result can be obtained as a straightforward modification of \cite[Lemma~2.25]{dpp}.

\begin{lemma}\label{compact}
 Let $\mu \in \cM_0(Q)$. Then,  for every $\theta \in C_c^\infty(Q)$ such that $0\leq \theta\leq 1$,
 \[
 \tilde \mu=\theta\mu
 \]
 is a diffuse measure with compact support on $Q$ such that 
 $$
 \|\tilde\mu\|_{\mathcal{M}(Q)}\leq \|\mu\|_{\mathcal{M}(Q)}.
 $$
 Moreover, if $\mu= f + g_t + \chi $ is a decomposition of $\mu$ as in \eqref{cap3c1-bis} with $\chi=\dive(H)$, $H\in L^{p'}(Q)$, then 
 $$
 \tilde \mu= \tilde{f} + \tilde{g}_t + {\tilde \chi}, 
 $$
with 
\begin{align*}
\tilde{f} & =\theta f-H\cdot\nabla\theta-\theta_t g\in L^{1}(Q),\\
\tilde{g} & =\theta g\in L^{p}(0,T;V),\\
\tilde{\chi}& =\dive(\theta H)\in \pw-1p'. 
\end{align*}
In particular, if $g\in L^{\infty}(Q)$, then $\tilde g \in L^{\infty}(Q)$.
\end{lemma}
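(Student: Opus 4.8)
The plan is to verify Lemma~\ref{compact} by direct computation, treating it as a routine consequence of the Leibniz rule applied to the decomposition $\mu = f + g_t + \chi$ with $\chi = \dive(H)$. First I would check the statements that do not involve the decomposition: that $\tilde\mu = \theta\mu$ is diffuse, and the norm bound $\|\tilde\mu\|_{\cM(Q)} \le \|\mu\|_{\cM(Q)}$. The norm bound is immediate since $0 \le \theta \le 1$, so $|\tilde\mu|(Q) = \int_Q \theta \, d|\mu| \le |\mu|(Q)$. That $\tilde\mu$ is diffuse follows because any Borel set $E$ with $\capp(E) = 0$ satisfies $\tilde\mu(E) = \int_E \theta \, d\mu = 0$ as $\mu(E) = 0$; compact support of $\tilde\mu$ in $Q$ is clear from $\supp\tilde\mu \subset \supp\theta \Subset Q$.

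Next I would carry out the algebraic identity. Writing everything against a test function $\varphi \in C_c^\infty(Q)$, one has
\[
\langle \theta\mu, \varphi \rangle = \langle \mu, \theta\varphi \rangle = \int_Q f\theta\varphi + \langle g_t, \theta\varphi \rangle + \langle \dive H, \theta\varphi\rangle.
\]
For the middle term, $\langle g_t, \theta\varphi\rangle = -\int_Q g\,(\theta\varphi)_t = -\int_Q g\theta_t\varphi - \int_Q g\theta\varphi_t = \int_Q \langle (\theta g)_t, \varphi\rangle - \int_Q g\theta_t\varphi$ in the distributional sense, i.e.\ $\theta g_t = (\theta g)_t - \theta_t g$. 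For the last term, $\langle \dive H, \theta\varphi\rangle = -\int_Q H\cdot\nabla(\theta\varphi) = -\int_Q H\cdot\nabla\theta\,\varphi - \int_Q H\cdot\nabla\varphi\,\theta = -\int_Q (H\cdot\nabla\theta)\varphi + \langle \dive(\theta H), \varphi\rangle$, i.e.\ $\theta\,\dive H = \dive(\theta H) - H\cdot\nabla\theta$. Collecting, $\tilde\mu = \tilde f + \tilde g_t + \tilde\chi$ with $\tilde f = \theta f - H\cdot\nabla\theta - \theta_t g$, $\tilde g = \theta g$, $\tilde\chi = \dive(\theta H)$, exactly as claimed.

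Finally I would confirm the function-space memberships. Since $\theta \in C_c^\infty(Q)$, multiplication by $\theta$ (and by $\theta_t$, $\nabla\theta$) preserves $L^1(Q)$, so $\theta f \in L^1(Q)$; as $g \in L^p(0,T;V) \hookrightarrow L^p(Q)$ and $\theta_t$ is bounded, $\theta_t g \in L^p(Q) \subset L^1(Q)$; and $H \in L^{p'}(Q)$ with $\nabla\theta$ bounded gives $H\cdot\nabla\theta \in L^{p'}(Q) \subset L^1(Q)$ (here one uses $Q$ bounded). Hence $\tilde f \in L^1(Q)$. For $\tilde g = \theta g$: since $g(t,\cdot) \in V = \sob \cap \elle2$ and $\theta(t,\cdot) \in C_c^\infty(\Omega)$, the product $\theta g$ lies in $V$ for a.e.\ $t$ with $\|\theta g\|_V \le C\|g\|_V$, so $\tilde g \in L^p(0,T;V)$. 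For $\tilde\chi = \dive(\theta H)$: as $\theta H \in L^{p'}(Q) = L^{p'}(0,T;L^{p'}(\Omega))$ and $\dive$ maps $L^{p'}(\Omega)$ into $W^{-1,p'}(\Omega)$, we get $\tilde\chi \in L^{p'}(0,T;W^{-1,p'}(\Omega)) = \pw-1p'$. The last assertion is trivial: if $g \in L^\infty(Q)$, then $\tilde g = \theta g \in L^\infty(Q)$ since $\theta$ is bounded. I do not expect any genuine obstacle here; the only point requiring mild care is justifying the distributional Leibniz manipulations for the $g_t$ and $\dive H$ terms, which is standard, and keeping track that all stray terms ($\theta_t g$ and $H\cdot\nabla\theta$) land in $L^1(Q)$ rather than in some weaker space — this is where the cut-off being smooth and compactly supported, together with boundedness of $Q$, is used.
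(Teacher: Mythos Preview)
Your proposal is correct and is exactly the direct Leibniz-rule verification that the paper has in mind when it says the result ``can be obtained as a straightforward modification of \cite[Lemma~2.25]{dpp}''; the paper does not spell out the proof itself. One small imprecision: for diffuseness you should note that $|\tilde\mu| = \theta|\mu|$ (since $\theta \ge 0$), so $|\tilde\mu|(E) = 0$ whenever $\capp(E)=0$, rather than only checking $\tilde\mu(E)=0$.
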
   

\medskip
We will also need an important property enjoyed by the convolution of diffuse measures. We first recall the following definition  (see \cite{BrePon:05a} and also \cite{MarPon:08}):

\smallskip
\begin{e-definition}\label{def-diff}
A sequence of measures $(\mu_n)$ in $Q$ is \emph{equidiffuse} if for every $\vare>0$ there exists $\eta>0$ such that
for every Borel measurable set \(E \subset Q\),
\[
\capp(E)<\eta \quad \Longrightarrow \quad |\mu_n|(E)<\vare \quad \forall n\geq 1. 
\]  
\end{e-definition}
\smallskip

\begin{proposition}\label{equi}
If $\mu \in \cM_0(Q)$, then the sequence $(\rho_n \ast\mu)$ is equidiffuse.  
\end{proposition}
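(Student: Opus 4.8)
The plan is to show that if $\mu \in \cM_0(Q)$, the convolutions $\rho_n * \mu$ form an equidiffuse sequence, i.e.\ their total variation on a Borel set is controlled uniformly in $n$ by a function of the capacity of that set tending to $0$. The natural strategy is by contradiction: suppose $(\rho_n * \mu)$ is not equidiffuse. Then there exist $\vare_0 > 0$ and a sequence of Borel sets $E_j \subset Q$ with $\capp(E_j) \to 0$ but $|\rho_{n_j} * \mu|(E_j) \ge \vare_0$ for some indices $n_j$. We may assume $\capp(E_j) < 2^{-j}$, and by inner/outer regularity of capacity replace $E_j$ by open sets $\omega_j$ with $\capp(\omega_j) < 2^{-j}$ and $(\rho_{n_j} * \mu)$-mass at least $\vare_0$ on $\omega_j$; summing, the open set $A_m = \bigcup_{j \ge m} \omega_j$ satisfies $\capp(A_m) \to 0$ as $m \to \infty$, so $\capp\big(\bigcap_m A_m\big) = 0$, and hence $|\mu|\big(\bigcap_m A_m\big) = 0$ because $\mu$ is diffuse.

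The key analytic step is to transfer mass bounds on $\rho_{n_j}*\mu$ back to mass bounds on $\mu$ itself. Here I would use the elementary estimate: for a nonnegative measure $\sigma$ and an open set $\omega$,
\[
\int_\omega (\rho_n * \sigma) \, dx\,dt = \int_Q (\rho_n * \chi_\omega)(s,y) \, d\sigma(s,y) \le \sigma\big( \omega_{1/n} \big),
\]
where $\omega_{1/n}$ denotes the $\tfrac1n$-neighbourhood of $\omega$ (using $\supp \rho_n \subset B_{1/n}(0)$ and $\int \rho_n = 1$). Splitting $\mu = \mu^+ - \mu^-$ into its Jordan parts and applying this to each, one gets $|\rho_n * \mu|(\omega) \le |\mu|(\omega_{1/n})$. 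Applying this with $\omega = \omega_j$ and $n = n_j$, the contradiction hypothesis gives $|\mu|\big((\omega_j)_{1/n_j}\big) \ge \vare_0$ for all $j$. The delicate point is that the enlarged sets $(\omega_j)_{1/n_j}$ need not have small capacity, so one cannot directly conclude. To handle this I would, for each $j$, shrink the issue: since $\capp(\omega_j)$ is small, one can instead work with a capacitary potential. Concretely, fix $v_j \in W$ with $v_j \ge \chi_{\omega_j}$ a.e.\ and $\|v_j\|_W < 2^{-j}$; then $\{v_j > 1/2\}$ is a cap-quasi open neighbourhood of $\omega_j$ with small capacity, and crucially the functions $\rho_{n_j} * v_j$ converge to $v_j$ in $W$ (or at least weakly), so for $n_j$ chosen large enough (depending on $j$) the set $U_j := \{\rho_{n_j} * v_j > 1/4\}$ is open, contains $(\omega_j)_{1/n_j}$ once $1/n_j$ is small relative to $j$, and satisfies $\capp(U_j) \le 4\|\rho_{n_j}*v_j\|_W \le C\,2^{-j}$. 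This is exactly the freedom we have: $n_j$ in the contradiction hypothesis can be taken arbitrarily large.

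With $U_j$ open, $\capp(U_j) \le C\,2^{-j}$, and $|\mu|(U_j) \ge |\mu|\big((\omega_j)_{1/n_j}\big) \ge \vare_0$, the set $G = \bigcap_m \bigcup_{j \ge m} U_j$ is a Borel (indeed $G_\delta$) set with $\capp(G) = 0$, hence $|\mu|(G) = 0$ since $\mu$ is diffuse. On the other hand, $|\mu|\big(\bigcup_{j\ge m} U_j\big) \le \sum_{j\ge m} |\mu|(Q) $ is not directly useful, so instead I argue: by finiteness of $|\mu|$ and $\capp(U_j) \to 0$, a Borel--Cantelli-type argument shows that $|\mu|(U_j) \to 0$ cannot hold under $|\mu|(U_j) \ge \vare_0$ unless the $U_j$ accumulate on a positive-capacity set; more cleanly, $|\mu|(\limsup_j U_j) \ge \limsup_j |\mu|(U_j) \ge \vare_0 > 0$ by reverse Fatou (since $|\mu|$ is finite), while $\capp(\limsup_j U_j) = 0$, contradicting that $\mu$ is diffuse. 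The main obstacle in this whole scheme is precisely the enlargement of the neighbourhood when passing from $\rho_n * \mu$ to $\mu$ — handling it requires the regularization of the capacitary potential described above and exploiting that the bad indices $n_j$ may be chosen as large as we wish. The rest is bookkeeping with regularity of capacity and of convolutions, for which I would cite the standard facts already used in the excerpt (e.g.\ \cite[Lemma~2.25]{dpp}).
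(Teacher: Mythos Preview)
Your overall strategy---contradiction, pass to open sets of small capacity, use capacitary potentials, and extract a zero-capacity limsup set with positive $|\mu|$-mass---is sound in spirit and close to the paper's argument. But the proof has two real gaps at the point you flag as ``delicate''.

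\textbf{First}, the indices $n_j$ are \emph{not} a free parameter. In the negation of equidiffuse, each $n_j$ comes paired with its set $E_j$: you get ``for each $\eta$ there exist $E$ \emph{and} $n$''. You then build $\omega_j$ and the potential $v_j$ from $E_j$; at that point $n_j$ is already fixed, so you cannot afterwards ``choose $n_j$ large enough depending on $v_j$''. One can argue that $n_j\to\infty$ (since each $\rho_n*\mu$ is individually a bounded smooth density, hence diffuse), but that is weaker than having $n_j$ large \emph{relative to} $v_j$.

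\textbf{Second}, and more seriously, the inclusion $U_j=\{\rho_{n_j}*v_j>1/4\}\supset(\omega_j)_{1/n_j}$ is unjustified and generally false. At a point $(t,x)$ at distance $\approx 1/n_j$ from $\omega_j$, the convolution $\rho_{n_j}*v_j(t,x)$ averages $v_j$ over $B_{1/n_j}(t,x)$; if $\omega_j$ is much thinner than $1/n_j$ (think of a set of small capacity) and $v_j$ decays quickly outside $\omega_j$, this average can be arbitrarily small. So there is no reason the $1/n_j$-enlargement stays inside $U_j$, and indeed the $1/n_j$-neighbourhood of a set of small capacity need \emph{not} have small capacity at all.

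The paper sidesteps both issues by never invoking a set inclusion. It estimates directly
\[
\int_{\omega_n}\rho_n*\mu\;\le\;\int_Q T_1(\varphi\zeta_n)\,(\rho_n*\mu)\,dxdt\;=\;\int_Q \big(\check\rho_n*T_1(\varphi\zeta_n)\big)\,d\mu
\]
(plus a boundary term controlled by a cutoff $\varphi$), and then shows that $\check\rho_n*(\varphi\zeta_n)\to 0$ in $W$, hence cap-quasi everywhere along a subsequence, hence $\mu$-a.e.\ by diffuseness; dominated convergence finishes. Your reverse-Fatou device is essentially the set-theoretic shadow of this argument, but to make it work you would need sets $U_j$ with $|\mu|(U_j)\ge\varepsilon_0$ and summable capacity, and your construction does not deliver them.
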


\begin{proof}
It suffices to establish the result when $\mu \geq 0$; in the general case we can apply the conclusion to the positive and the negative parts of $\mu$. \\
Assume by contradiction that $(\rho_n\ast\mu)$ is not equidiffuse. 
Passing to a subsequence if necessary, there exist $\varepsilon > 0$ and a sequence $(E_n)$ of Borel subsets of $Q$ such that
\[
\capp{(E_n)} \le \frac{1}{n} \quad \text{and} \quad \int_{E_n} \rho_n * \mu \ge \varepsilon
\]
for every $n \ge 1$. By definition of capacity, there exists an open subset $\omega_n \subset Q$ such that
\[
E_n \subset \omega_n \quad \text{and} \quad \capp{(\omega_n)} \le \frac{2}{n}.
\]
Let $(\zeta_n)$ be a sequence in $W$ such that
\begin{equation*}\label{3.0}
\zeta_n \geq \chi_{\omega_n} \; \text{a.e.}  \quad \text{and} \quad \|\zeta_n\|_W \le \frac{3}{n}. 
\end{equation*}

\medskip
Let $A \Subset Q$ be an open set such that $\mu(Q \setminus A) < \frac{\varepsilon}{2}$ and let $\varphi \in C_c^\infty(Q)$ be such that
\[
0 \le \varphi \le 1 \; \text{in $Q$} \quad \text{and} \quad \varphi = 1  \; \text{on $A$}.
\]
Since $\varphi$ and $\zeta_n$ are nonnegative and $T_1$ is concave on $\RR^+$,
\[
T_1(\zeta_n) \le T_1(\varphi\zeta_n) +  T_1((1-\varphi)\zeta_n).
\]
Given a sequence $(\rho_n)$ of mollifiers satisfying \eqref{2.1a}, for every $n \ge 1$, we have
\[
\begin{split}
\varepsilon  \le \int_{\omega_n} \rho_n \ast \mu \, dxdt 
& \le \int_Q T_1(\zeta_n) (\rho_n \ast \mu) \, dxdt \\
& \le \int_Q T_1(\varphi\zeta_n) (\rho_n \ast \mu) \, dxdt + \int_Q T_1((1 - \varphi)\zeta_n) (\rho_n \ast \mu) \, dxdt\\
& \le \int_Q \check\rho_n \ast T_1(\varphi\zeta_n)\, d\mu + \int_{Q \setminus A}  \rho_n \ast \mu \, dxdt.
\end{split}
\]
 Since $ A$ is open, we have (see e.g. \cite[Section 1.9, Thm 1]{EG})
\begin{equation}\label{3.1a}
 \limsup\limits_{n\to \infty} \int_{Q \setminus A}  \rho_n \ast \mu \, dxdt\leq  \mu(Q \setminus A).
\end{equation}
 We now show that
\begin{equation}\label{3.1}
\lim_{n \to \infty} \int_Q \check\rho_n\ast T_1(\varphi \zeta_n) \, d\mu \to 0.
\end{equation}
Indeed, we have $\|\varphi\zeta_n\|_W \to 0$ and since $\varphi$ has compact support,
\[
\|\check\rho_n \ast(\varphi\zeta_n)\|_W \to 0.
\]
Passing to a subsequence, there exists a Borel set $F \subset Q$ such that $\capp{(F)} = 0$ and
\[
\check\rho_n \ast(\varphi\zeta_n)(x) \to 0 \quad \forall x \in Q \setminus F.
\]
Since the measure $\mu$ is diffuse, we deduce that $(\check\rho_n \ast(\varphi\zeta_n))$ converges a.e.\@ with respect to $\mu$. Since \(\varphi\zeta_n\) is nonnegative,
\[
0 \le \check\rho_n \ast T_1(\varphi\zeta_n) \le  \check\rho_n \ast\varphi\zeta_n.
\]
Thus,
\[
\check\rho_n \ast T_1(\varphi\zeta_n) \to 0 \quad \text{$\mu$-a.e.}
\]
Assertion \eqref{3.1} is now a consequence of the dominated convergence theorem.

By \eqref{3.1a}--\eqref{3.1}, we deduce that
\[
\varepsilon \le  \mu(Q \setminus A).
\]
This contradicts our choice of $A$. Therefore, the sequence $(\rho_n \ast \mu)$ is equidiffuse.
\end{proof}

We now present the
 
\begin{proof}[Proof of Theorem~\ref{app}]  
Let $\mu_n = \rho_n * \mu$, where $(\rho_n)$ is a sequence of mollifiers satisfying \eqref{2.1a}. We denote by $u_n$ the solution of \eqref{appeq} with data $\mu_n$ and $u_0=0$. 
For $k > 0$ and $\delta>0$, consider
the functions $S_{k,\delta}$ and $T_{k,\delta}$ given by \eqref{skd} and \eqref{tkd}, respectively.
Recall that $T_{k,\delta}$ converges pointwise to $T_{k}$ as $\delta \to 0$.\\
Given $\varphi \in C_c^\infty(Q)$, multiply the equation solved by $u_n$ by $\skdun \varphi$. We then have
\begin{multline}\label{equa}
 \dys 
 \tkdun_t  -\dive\left(  \skdun|\nabla u_n|^{p-2}\nabla u_n\right) \\ \dys 
 = \skdun\mu_n +\frac{1}{\delta}|\nabla u_n|^p  {\rm sign}(u_n)\chi_{\{k\leq |u_n|<k+\delta\}} \quad \text{in}\ \mathcal{D}'(Q).
\end{multline}
Using $(1-\skdun)  {\rm sign}(u_n)$ as test function in the equation \eqref{appeq} for $u_n$ we obtain 
\begin{equation}\label{esta}
\frac{1}{\delta} \underset{\{k\leq |u_n|<k+\delta\}}{\int}|\nabla u_n|^p\leq \int_{Q} \big|1-\skdun \big|| \mu_n| \leq \underset{\{|u_n|>k\}}{\int} |\mu_n|.
\end{equation}
Let us set
\[
\nu_{n}^{k} = (\tkun)_t -\Delta_{p}(\tkun).
\]
Thanks to \rife{equa}, the right-hand side of \eqref{equa} remains bounded in $L^{1}(Q)$ as $\delta \to 0$, then we deduce that $\nu_n^k$ is a finite measure in $Q$ and
\[
\begin{split}
\int_Q |\nu_n^k| 
& \le \liminf_{\delta \to 0}{\bigg\{\int_Q |\skdun\mu_n| + \frac{1}{\delta} \underset{\{k\leq |u_n|<k+\delta\}}{\int}|\nabla u_n|^p \bigg\}}\\
& \le \underset{\{|u_n|\le k\}}{\int} |\mu_n| + \underset{\{|u_n|>k\}}{\int} |\mu_n| = \|\mu_n\|_{\cM(Q)} \le \|\mu\|_{\cM(Q)}.
\end{split}
\]
In particular, $(\nu_n^k)$ remains uniformly bounded in $\cM(Q)$ as $n \to \infty$. 

We now show that 
\begin{equation}
\label{3.12}
\nu_n^k \overset{*}{\rightharpoonup} \nu^k \quad \text{weakly\(^*\) in \(\cM(Q)\)},
\end{equation}
where 
\[
\nu^k= (\tku)_t -\Delta_{p}(\tku).
\] 
To this purpose, we recall that by classical results on parabolic equations with measure data (see e.g.~Proposition~\ref{pro} below), there exists a function $u\in L^1(Q)$ such that (taking a subsequence if necessary) $u_n\to u$ and $\nabla u_n \to \nabla u$ a.e.\@ on $Q$. In particular,  since $|\nabla T_k(u_n)|^{p-2}\nabla T_k(u_n)$ is bounded in $L^{p'}(Q)$ and almost everywhere converges to $|\nabla T_k(u)|^{p-2}\nabla T_k(u)$, then it weakly converges to the same limit in $L^{p'}(Q)$. Therefore, we have
$$
\tkun \rightharpoonup \tku \quad \text{weakly in $\psob$} \quad 
$$
and
$$
\Delta_p \tkun \rightharpoonup \Delta_p \tku \quad
\text{weakly in $\pw-1p'$}
$$
as $n \to \infty$. Together with the fact that $(\nu_n^k)$ is uniformly bounded in $\cM(Q)$, this implies \eqref{3.12}. Moreover, since $T_k(u) \in \psob \cap L^\infty(Q)$, it follows from Proposition~\ref{gr} that $\nu^k$ is a diffuse measure.

By \eqref{equa}--\eqref{esta} we also have
\begin{equation}\label{est2}
\begin{split}
\dys \int_{Q}|\nu_{n}^{k}-\mu_n |
&\dys \leq \liminf_{\delta \to 0}\int_{Q}\left|\skdun\mu_{n} +\frac{|\nabla u_n|^{p}}{\delta}{\rm sign}(u_n)\chi_{\{k\leq |u_n|<k+\delta\}}-\mu_n\right|\\
& \dys \leq \underset{\{|u_n| >k\}}{\int}| \mu_n| + \limsup_{\delta \to 0}{\underset{\{k\leq |u_n|<k+\delta\}}{\int} \frac{|\nabla u_n|^p}{\delta} } \leq 2 \underset{\{|u_n| >k\}}{\int}|\mu_n|.
\end{split}
\end{equation}
Recall that by Proposition~\ref{equi} the sequence $(\mu_n)$ is equidiffuse. Applying Theorem~\ref{stimcap} we can fix $k > 0$ sufficiently large (depending only on $\eps > 0$) so that the right-hand side of \eqref{est2} is $\leq \eps$, $\forall n \geq 1$.
From \eqref{est2} and the lower semicontinuity of the norm with respect to the weak$^*$ convergence,  we obtain
\begin{equation*}
\|\nu^k -\mu\|_{\mathcal{M}(Q)}\leq\liminf_{n \to \infty}\int_{Q}|\nu_{n}^{k}-\mu_n |\leq 2\liminf_{n \to \infty} \underset{\{|u_n| >k\}}{\int} |\mu_n| \leq \eps.
\end{equation*}
This concludes the proof of Theorem~\ref{app}.
\end{proof}

\section{Renormalized formulation}

In this section, we come back to the construction of the approximation in the proof of Theorem~\ref{app} and we  develop that idea in connection with  a renormalized formulation for solutions of \eqref{appeq-new} when $\mu$ is a diffuse measure.  Because of the intrinsic interest of renormalized formulations with measure data, we deal with the more general initial boundary value problem
\begin{equation}\label{renpb}
\begin{cases}
    u_t-\dive(\adixt {\nabla u} )=\mu & \text{in}\ Q,\\
    u=u_0  & \text{on}\ \{0\} \times\Omega,\\
 u=0 &\text{on}\ (0,T)\times\partial\Omega.
  \end{cases}
\end{equation} 
 In all the following, we assume that $a : Q \times \RR^N \to \RR^N$ satisfies \eqref{coercp}--\eqref{monot}, that 
$$
u_0\in \elle1
$$ 
and that $\mu$ is a diffuse measure, i.e.
$$
\mu\in\cM_0(Q).
$$
 A notion of renormalized solution for problem \eqref{renpb} when $\mu$ is a diffuse measure was introduced in \cite{dpp} and in the same paper the existence and uniqueness of such a  solution is proved. In \cite{DP}  a similar notion of entropy solution is also defined, and proved to be equivalent to that of renormalized solution.  Our goal is to give here a new   definition which, in contrast with the previous ones, is 
not formulated in terms of  a decomposition of $\mu$ like in \eqref{cap3c1}. The next definition is certainly closer to the one used for conservation laws in \cite{BCW} and to one of the existing formulations in the elliptic case (see \cite{DalMal:97,DMOP}). 

\begin{definition}\label{rendef}
Let $\mu\in \cM_0(Q)$. A function $u\in L^1(Q)$ is a renormalized solution of problem \eqref{renpb} if $T_k(u)\in \psob$ for every $k>0$ and if there exists a  sequence $(\la_k)$ in $\pmisure$ such that
\be\label{limnu}
\lim\limits_{k\to \infty} \| \la_k\|_{\pmisure} =0,
\ee
and
\begin{multline}\label{req}
-\int_Q T_k(u)\vfi_t\,dxdt +\int_Q \adixt {\nabla T_k(u)}\nabla \vfi\,dxdt
\\
=\int_Q \vfi\,d\mu+\int_Q \vfi\,d\la_k+\into T_k(u_0)\vfi(0, x)\,dx 
\end{multline}
 for every \(k > 0\) and $\vfi\in C^\infty_c([0,T)\times \Omega)$.
\end{definition}

\begin{remark}
\label{remark4.1}
\rm
It is straightforward to check that, by approximation, one can take  as test function in \eqref{req} any $\vfi\in W^{1,\infty}(Q)$ such that $\varphi = 0$ on $(\{T\} \times \Omega) \cup ((0,T)\times \partial \Omega)$.
\end{remark}

Some considerations are in order concerning 
Definition~\ref{rendef}. First of all, observe that \eqref{req} implies that $(T_k(u))_t-\dive(\adixt{\nabla T_k(u))}$ is a finite measure and
$$
(T_k(u))_t-\dive(\adixt{\nabla T_k(u)})=\mu+\la_k\qquad \hbox{in $\pmisure$.}
$$
This provides a decomposition of the measure \(\mu+\la_k\) of the form \eqref{cap3c1-bis} with \(g \in L^\infty(Q)\).
In view of Proposition~\ref{gr}, the left hand side is a diffuse measure. Since $\mu$ itself is  diffuse, the consequence is that the measures $\la_k$ are diffuse. Moreover, condition \eqref{limnu} implies that the left hand side is  a strong approximation of $\mu$. In particular, \emph{the existence of a  renormalized solution in the sense of Definition~\ref{rendef} implies as a corollary the statement of Theorem~\ref{app}}.  

Finally, since $T_k(u)\in L^p(0, T; V)$, we have $(T_k(u))_t\in W'$\footnote{indeed, for every $v \in L^p(0, T; V)$, we have $v_t \in
W'$ in the following sense:
$$
\langle v_t, \vfi\rangle_{(W',W)}= -\int_0^T\langle v,\vfi_t \rangle_{(V,V')}\,dt\qquad \hbox{for every $\vfi\in W$}
$$
and since
$$
\left| \int_0^T\langle v,\vfi_t \rangle_{(V,V')}\,dt\right| \leq \|v\|_{L^p(0, T; V )}\|\vfi_t\|_{L^{p'}(0, T; V')}\leq \|v\|_{L^p(0, T;V )}\|\vfi\|_{W}
$$
we have $v_t\in W'$ and $\|v_t\|_{W'} \leq \|v\|_{L^p(0, T;V )}$.
}
and, due to assumption \rife{cont}, $\adixt{\nabla T_k(u)}\in L^{p'}(Q)$. Therefore, we conclude that
$$
(T_k(u))_t-\dive(\adixt{\nabla T_k(u)})\in W'\cap \pmisure.
$$

Another important fact is that we can recover from equation \eqref{req} the standard estimates known for nonlinear potentials. In order to prove such estimates  and further properties of the renormalized formulation, we will need a  few technical ingredients.

First of all, recall that any $z\in W$ admits a  unique cap-quasi continuous representative; henceforth, by 
identifying $z$ with its cap-quasi continuous representative, the integral 
$\int_Q z\,d\mu$ is well defined for every diffuse measure $\mu$ and every $z\in W$.  
Unfortunately, 
given \(z \in W\), a smooth truncation of \(z\) need not belong to \(W\). In this case, one is led  to consider the larger space
\begin{equation*}\label{spaceS}
S=\{ z\in L^p (0,T;V); z_t \in L^{p'}(0,T;\w-1p' ) +  \luq \}
\end{equation*}
endowed with its norm
$$
\| z\|_S= \|z\|_{L^p (0,T;V)}+ \|z_t\|_{L^{p'}(0,T;\w-1p' ) +  \luq }.
$$
Indeed,

\begin{lemma}
\label{tech1}
For every $v\in W$ and every function $\Phi\in C^2(\RR)$ such that $\Phi'$ and $\Phi''$ are bounded, we have $\Phi(v)\in S$ and the application $v\mapsto \Phi (v)$ is continuous from $W$ to $S$. Moreover, if $v_t=\dive(G)+g$ with $G\in L^{p'}(Q)$, $g\in L^{p'}(0,T;\elle 2)$, then
we have 
\be\label{cap-tec1}
(\Phi(v))_t=\dive(\Phi'(v)G)+ \Phi'(v)g - \Phi''(v)G\cdot \nabla v \quad \hbox{in ${\mathcal D}'(Q)$.}
\ee
 
\end{lemma}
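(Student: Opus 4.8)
\textbf{Proof strategy for Lemma~\ref{tech1}.}

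The plan is to first establish the identity \eqref{cap-tec1} for $v \in W$ enjoying the extra regularity $v_t = \dive(G) + g$ with $G \in L^{p'}(Q)$ and $g \in L^{p'}(0,T;\elle2)$, and then to obtain both the membership $\Phi(v) \in S$ and the continuity of $v \mapsto \Phi(v)$ by a density argument. For the identity, I would test the definition of the distributional time derivative of $v$ against $\Phi'(v)\varphi$ with $\varphi \in C_c^\infty(Q)$. Since $v \in W$, one may legitimately pair $v_t$ with $\Phi'(v)\varphi \in L^p(0,T;V)$ (note $\Phi'$ bounded and $\Phi' \circ v \cdot \varphi$ has the required integrability because $\nabla(\Phi'(v)) = \Phi''(v)\nabla v \in L^p(Q)$ as $\Phi''$ is bounded and $\nabla v \in L^p(Q)$). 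The integration-by-parts formula \eqref{parts}-type argument, combined with the chain rule applied to the smooth-in-time approximations of $v$ (using density of $C_c^\infty([0,T]\times\Omega)$ in $W$), yields
\[
\int_0^T \langle (\Phi(v))_t, \varphi\rangle \, dt = \int_0^T \langle v_t, \Phi'(v)\varphi\rangle \, dt,
\]
and then substituting $v_t = \dive(G) + g$ and expanding $\nabla(\Phi'(v)\varphi) = \Phi''(v)\varphi \nabla v + \Phi'(v)\nabla\varphi$ produces exactly \eqref{cap-tec1}. From this identity one reads off that $(\Phi(v))_t = \dive(\Phi'(v)G) + \big(\Phi'(v)g - \Phi''(v)G\cdot\nabla v\big)$, where $\Phi'(v)G \in L^{p'}(Q)$ (as $\Phi'$ bounded), $\Phi'(v)g \in L^{p'}(0,T;\elle2)$, and $\Phi''(v)G\cdot\nabla v \in L^1(Q)$ by Hölder (since $G \in L^{p'}(Q)$ and $\nabla v \in L^p(Q)$ and $\Phi''$ is bounded). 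Together with $\Phi(v) \in L^p(0,T;V)$ — which holds because $\Phi(v) \in L^p(0,T;\sob)$ from the chain rule and $\Phi(v) \in L^p(0,T;\elle2)$ since $|\Phi(v)| \le |\Phi(0)| + \|\Phi'\|_\infty|v|$ — this shows $\Phi(v) \in S$ for such $v$.

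To pass to general $v \in W$ and simultaneously get continuity, I would use that $W \hookrightarrow L^{p}(0,T;V)$ with $v_t \in L^{p'}(0,T;V') = L^{p'}(0,T;\dualsob) + L^{p'}(0,T;\elle2)$, so every $v \in W$ admits (not necessarily uniquely, but that is harmless) a writing $v_t = \dive(G) + g$ with $G \in L^{p'}(Q)$, $g \in L^{p'}(0,T;\elle2)$, and moreover one can choose this splitting so that $\|G\|_{L^{p'}(Q)} + \|g\|_{L^{p'}(0,T;\elle2)} \le C\|v_t\|_{L^{p'}(0,T;V')}$. Given $v_n \to v$ in $W$, take such splittings $v_n{}_t = \dive(G_n) + g_n$ and $v_t = \dive(G) + g$ with $G_n \to G$ in $L^{p'}(Q)$, $g_n \to g$ in $L^{p'}(0,T;\elle2)$, and $v_n \to v$ in $L^p(0,T;V)$; passing to a subsequence, $v_n \to v$ and $\nabla v_n \to \nabla v$ a.e.\ in $Q$. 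Then $\Phi(v_n) \to \Phi(v)$ in $L^p(0,T;V)$ follows from dominated convergence using the bounds on $\Phi',\Phi''$, and formula \eqref{cap-tec1} applied to each $v_n$ shows $(\Phi(v_n))_t - (\Phi(v))_t = \dive\big(\Phi'(v_n)G_n - \Phi'(v)G\big) + \big(\Phi'(v_n)g_n - \Phi'(v)g\big) - \big(\Phi''(v_n)G_n\cdot\nabla v_n - \Phi''(v)G\cdot\nabla v\big)$, each bracket tending to $0$ in the appropriate norm ($L^{p'}(Q)$, $L^{p'}(0,T;\elle2)$, $L^1(Q)$ respectively) by a standard product-convergence argument (strong convergence of one factor, boundedness plus a.e.\ convergence of the other, Vitali). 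This gives $\|\Phi(v_n) - \Phi(v)\|_S \to 0$ along the subsequence, and since the limit is independent of the subsequence, the full sequence converges; in particular $\Phi(v) \in S$ for all $v \in W$.

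The main obstacle I anticipate is the rigorous justification of the chain rule \eqref{cap-tec1} at the level of the time derivative, i.e.\ that $(\Phi(v))_t = \Phi'(v)v_t$ in the weak sense when $v \in W$ but $\Phi(v)$ a priori only lies in the larger space $S$: one cannot directly pair $(\Phi(v))_t$ against test functions in $W$, so the computation must be carried out against $C_c^\infty(Q)$ and one must ensure all the integrations by parts are licit. The clean way around this is to approximate $v$ by smooth functions $v_n \in C_c^\infty([0,T]\times\Omega)$ in $W$ (available by \cite[Theorem~2.11]{dpp}), for which the classical chain rule holds pointwise, and then pass to the limit using the a.e.\ convergence of $v_n$ and $\nabla v_n$ together with uniform integrability — essentially the same limiting machinery as in the continuity part. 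A secondary technical point is controlling the term $\Phi''(v)G\cdot\nabla v$: it is only in $L^1(Q)$, which is exactly why the target space $S$ allows an $L^1(Q)$ component in the time derivative, and one must check that this is consistent with the required continuity (convergence in $L^1(Q)$, which the Vitali argument delivers).
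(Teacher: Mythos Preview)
Your proposal is correct and, once you settle on the ``clean way'' you describe in the last paragraph, it is exactly the paper's argument: approximate $v$ by smooth $v_n$ via the density of $C_c^\infty([0,T]\times\Omega)$ in $W$, use that the chain rule \eqref{cap-tec1} holds pointwise for smooth functions, and pass to the limit term by term using the convergences $G_n\to G$ in $L^{p'}(Q)$, $g_n\to g$ in $L^{p'}(0,T;\elle2)$, $v_n\to v$ in $L^p(0,T;V)$ together with the boundedness of $\Phi',\Phi''$; the continuity of $v\mapsto\Phi(v)$ follows from the same limiting computation. Your initial attempt to derive the chain rule directly by pairing $v_t$ with $\Phi'(v)\varphi$ is not needed (and, as you note, is where the difficulty lies), so you may as well drop it and go straight to the approximation.
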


\begin{proof}
It follows from \cite[Theorem~2.11]{dpp} that $C^\infty_c([0,T]\times \Omega)$ is dense in $W$. Then there exists a sequence of smooth functions $(v_n)$  converging to $v$ in $W$. Let $v_t=\dive(G)+g$ with $G\in L^{p'}(Q)$, $g\in L^{p'}(0,T;\elle 2)$; the convergence of $(v_n)$ to $v$ in $W$ implies that $v_n\to v$ in $L^p(0,T;V)$ and
there exist \(G_n \in L^{p'}(Q)\) and \(g_n \in L^{p'}(0,T;\elle 2)\) such that $(v_n)_t=\dive(G_n)+ g_n$ and  $G_n\to G$ in $L^{p'}(Q)$ and $g_n\to g$ in $ L^{p'}(0,T;\elle 2)$. Since the equality  \rife{cap-tec1} is true for  $v_n$, passing to the limit (which is possible thanks to the properties of $\Phi$) we recover \rife{cap-tec1} for $v$. The continuity of $T$ is established in a similar way using identity \eqref{cap-tec1}.
\end{proof}

Bounded functions in \(S\) satisfy a capacitary estimate for the parabolic capacity; we refer the reader to \cite[Theorem~3 and  Lemma~2]{pe2} for a proof of the following 

\begin{lemma}
\label{tech2}
If $z\in S\cap L^\infty(Q)$, then \(z\) admits a unique cap-quasi continuous representative. Moreover, we have
$$
\capp(\{|z|>k\})\leq \frac Ck \max{\left\{  [z]^{\frac1p}\,,\,[z]^{\frac1{p'}}\right\}},
$$
where 
\begin{multline*}\label{cap-tec2}
[z]= \inf\Big\{ \|z\|_{L^p(0,T;\sob )}^p+ \|(z_t)_1\|_{L^{p'}(0,T;\duale)}^{p'}
\\
+ \|z\|_\infty \, \|(z_t)_2\|_{L^1(Q)}+ \|z\|_{\limitate 2}^2 \Big\}
\end{multline*}
and the infimum is taken over all decomposition of $z_t= (z_t)_1+(z_t)_2$ with $(z_t)_1\in L^{p'}(0,T;\duale)$ and $(z_t)_2\in L^1(Q)$.
\end{lemma}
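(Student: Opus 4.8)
The plan is to dominate $z$ from above by an element $\xi$ of $W$ whose norm is controlled by $\max\{[z]^{1/p},[z]^{1/p'}\}$, and then to read off the capacity bound from Lemma~\ref{capquasi}. Since $\{|z|>k\}\subseteq\{z^+>k\}\cup\{z^->k\}$, $\capp$ is subadditive, and $z^{\pm}\in S\cap L^\infty(Q)$ with decompositions obtained from those of $z$ via the chain rule \eqref{cap-tec1} (apply it to smooth convex/concave approximations of $s\mapsto s^{\pm}$ with bounded derivatives and pass to the limit, which does not increase the quantities entering $[z]$), it suffices to treat $z\ge 0$; I would do this first for $z$ smooth and then pass to the limit.

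\emph{The core estimate for smooth $z$.} Fix $\varepsilon>0$ and a splitting $z_t=(z_t)_1+(z_t)_2$ with $(z_t)_1=\dive(G)$, $G\in L^{p'}(Q)^N$, $(z_t)_2=g\in L^1(Q)$, almost realizing $[z]$, and set $M:=\|z\|_{L^\infty(Q)}$. I would let $\xi\in W$ solve the backward problem
\[
\begin{cases}
-\xi_t-\Delta_p\xi=-\dive\bigl(G+|\nabla z|^{p-2}\nabla z\bigr)+|g| & \text{in }Q,\\
\xi=z & \text{on }\{T\}\times\Omega,\\
\xi=0 & \text{on }(0,T)\times\partial\Omega,
\end{cases}
\]
which is well posed in $W$ because for smooth $z$ one has $|g|\in L^{p'}(0,T;V')$ and $z(T,\cdot)\in\elle2$. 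Since $-z_t-\Delta_p z=-\dive(G+|\nabla z|^{p-2}\nabla z)-g$ and $|g|+g\ge0$ with the same terminal datum, the comparison principle — test the difference with $(z-\xi)^+$ and use \eqref{monot} — gives $\xi\ge z$ a.e.\@ in $Q$, hence $\xi\ge k$ a.e.\@ on $\{z>k\}$. Then I would estimate $\|\xi\|_W$: testing the equation with $\xi$ on $(\tau,T)$ and using Young's inequality, the $\dive$-term is absorbed at the price of $C\bigl(\|G\|_{L^{p'}(Q)}^{p'}+\|z\|_{\psob}^{p}\bigr)$, the terminal term is $\tfrac12\|z(T)\|_{\elle2}^2\le\tfrac12\|z\|_{\limitate2}^2$, and $\|\xi_t\|_{\pw-1p'}$ is bounded from the equation exactly as in the derivation of \eqref{stizt}. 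As $\|G\|_{L^{p'}(Q)}^{p'}\simeq\|(z_t)_1\|_{\pw-1p'}^{p'}$, all of these contributions are $\le C[z]$; granting the control of $\int_Q|g|\,\xi$ (see below), one gets $\|\xi\|_W\le C\max\{[z]^{1/p},[z]^{1/p'}\}$, and since $\{z>k\}$ is open for $z$ continuous, Lemma~\ref{capquasi} yields $\capp(\{z>k\})\le\|\xi/k\|_W$.

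\emph{Passage to the limit and the quasicontinuous representative.} For general $z\in S\cap L^\infty(Q)$ I would pick smooth $z_n$ with $z_n\to z$ in $L^p(0,T;V)\cap C([0,T];\elle2)$ (the latter is available since $z$ bounded forces $z\in C([0,T];\elle2)$), $(z_n)_t=\dive(G_n)+g_n$, $G_n\to G$ in $L^{p'}(Q)$, $g_n\to g$ in $L^1(Q)$, $\|z_n\|_{L^\infty(Q)}\le M$. Applying the core estimate to $z_n-z_m$ — whose $[\,\cdot\,]$ now tends to $0$, crucially because $z_n\to z$ also in $\limitate2$ — shows $\capp(\{|z_n-z_m|>\eta\})\to0$ for every $\eta>0$; hence, along a subsequence, $(z_n)$ converges cap-quasi uniformly, producing the (unique) cap-quasicontinuous representative of $z$ and showing that $\{z>k\}$ and $\{z<-k\}$ are cap-quasi open. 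Moreover the dominating functions $\xi_n$ are bounded in $W$ by $C\max\{[z]^{1/p},[z]^{1/p'}\}$, so up to a further subsequence $\xi_n\rightharpoonup\xi$ weakly in $W$ with $\xi\ge z$ a.e., hence $\xi\ge z$ cap-quasi everywhere; thus $\xi/k\ge\chi_{\{z>k\}}$ cap-q.e., and Lemma~\ref{capquasi} gives $\capp(\{z>k\})\le\|\xi/k\|_W\le\tfrac Ck\max\{[z]^{1/p},[z]^{1/p'}\}$. Adding the symmetric estimate for $\{z<-k\}$ completes the argument.

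\emph{The main obstacle.} The delicate point — and the reason the mixed quantity $\|z\|_\infty\,\|(z_t)_2\|_{L^1(Q)}$ appears in $[z]$ — is the control of $\int_Q|g|\,\xi$ in the energy estimate: the $L^1$ part $g=(z_t)_2$ lies neither in $W$ nor, with uniform bounds along the regularization, in $L^{p'}(0,T;V')$, so $\xi$ need not be bounded and $\int_Q|g|\,\xi$ is not a priori $\le C[z]$. One splits $\xi=\min(\xi,2M)+(\xi-2M)^+$: the first part contributes $\int_Q|g|\min(\xi,2M)\le2M\|g\|_{L^1(Q)}=2\|z\|_\infty\|(z_t)_2\|_{L^1(Q)}$, exactly twice the corresponding term in $[z]$, while $(\xi-2M)^+$ must be used as a test function in the $\xi$-equation — it vanishes at $t=T$ because $z\le M$ — and the remaining $L^1$ pairing absorbed through a renormalization-type argument that hinges on the boundedness of $z$. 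Making this last step rigorous is where I expect the real work to lie; it is carried out in \cite[Theorem~3 and Lemma~2]{pe2}.
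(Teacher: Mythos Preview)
The paper does not prove this lemma at all: it simply refers the reader to \cite[Theorem~3 and Lemma~2]{pe2}. Your proposal therefore goes considerably further than the paper, and your final sentence shows you land on the same reference for the one genuinely hard step.

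Your strategy---dominate $z$ by a backward $p$-Laplace potential $\xi\in W$, estimate $\|\xi\|_W$ in terms of $[z]$, then invoke Lemma~\ref{capquasi}---is exactly the mechanism the paper uses to prove Theorem~\ref{stimcap}, so the outline is sound and in the right spirit. The reduction to $z\ge 0$ via Lemma~\ref{tech1} is fine (the extra $\Phi''(z)G\cdot\nabla z$ term in $((\Phi(z))_t)_2$ vanishes in the limit since $\nabla z=0$ a.e.\ on $\{z=0\}$), and the comparison $\xi\ge z$ is correct. The approximation step producing the cap-quasi continuous representative is also the right idea.

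The one place where your argument is not yet a proof is precisely the one you flag: testing the $\xi$-equation with $(\xi-2M)^+$ still leaves $\int|g|(\xi-2M)^+$ on the right-hand side, so the loop is not closed by the manipulation you describe. Something further is needed---in \cite{pe2} this is handled by a different organization of the auxiliary problem and the estimates, not by a single additional test function. Since you explicitly defer this point to \cite{pe2}, your proposal is consistent with the paper's treatment; just be aware that the ``renormalization-type argument'' you allude to is not quite as simple as splitting $\xi$ and testing once more.
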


We obtain in particular the following corollary which allows one to pass from an inequality almost everywhere to an inequality cap-quasieverywhere.

\begin{corollary}
\label{tech4}
If $v\in W$ and $v\leq M$ almost everywhere in $Q$, then $v\leq M$ cap-quasi everywhere in $Q$. 
\end{corollary}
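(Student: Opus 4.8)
The plan is to deduce Corollary~\ref{tech4} from Lemma~\ref{tech2} (and Lemma~\ref{tech1}) by reducing the general statement to a capacitary estimate for a suitable bounded function in $S$. First I would reduce to the case $M=0$ by replacing $v$ with $v-M$: since $v\in W$, the translate $v-M$ also belongs to $W$ (the constant has zero time derivative and lies in $L^p(0,T;V)$ only up to the boundary issue, so more carefully one works with $(v-M)^+$ directly). So the real object of study is $(v-M)^+$, which is $0$ a.e.\ in $Q$ by hypothesis, and the goal is to show it is $0$ cap-quasi everywhere, i.e.\ that $\capp(\{(v-M)^+>\epsilon\})=0$ for every $\epsilon>0$.

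The key step is to apply Lemma~\ref{tech1} to a smooth, bounded, nondecreasing approximation $\Phi$ of the positive part: fix a function $\Phi\in C^2(\RR)$ with $\Phi'$, $\Phi''$ bounded, $\Phi\equiv 0$ on $(-\infty,0]$, $\Phi$ nondecreasing, and $\Phi(s)\le s^+$. By Lemma~\ref{tech1}, $\Phi(v-M)\in S$ (again handling the constant shift: one may instead apply the lemma to $v$ with the $C^2$ function $s\mapsto\Phi(s-M)$, whose derivatives are still bounded), and clearly $\Phi(v-M)\in L^\infty(Q)$ since $v\in W\subset L^\infty(0,T;L^2(\Omega))$ need not give an $L^\infty$ bound — so here I would instead use that $\Phi$ itself is bounded, hence $\Phi(v-M)\in L^\infty(Q)$ directly. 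Then Lemma~\ref{tech2} gives
\[
\capp\bigl(\{|\Phi(v-M)|>\delta\}\bigr)\leq \frac{C}{\delta}\max\bigl\{[\Phi(v-M)]^{\frac1p},[\Phi(v-M)]^{\frac1{p'}}\bigr\}
\]
for every $\delta>0$, where $[\cdot]$ is the quantity from Lemma~\ref{tech2}. But $\Phi(v-M)=0$ a.e.\ in $Q$ because $v-M\le 0$ a.e.\ and $\Phi\equiv 0$ on $(-\infty,0]$; hence $[\Phi(v-M)]=0$ (every ingredient in the infimum — the $L^p(0,T;V)$ norm, the $L^{p'}$ norm of the time-derivative pieces, the $L^\infty$ times $L^1$ term, the $L^\infty(0,T;L^2)$ norm — vanishes for the zero function), and therefore $\capp(\{|\Phi(v-M)|>\delta\})=0$ for all $\delta>0$. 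Since the cap-quasicontinuous representative of $\Phi(v-M)$ agrees a.e.\ with $0$, and two cap-quasicontinuous functions equal a.e.\ are equal cap-quasi everywhere, we conclude $\Phi(v-M)=0$ cap-quasi everywhere.

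Finally I would unwind this back to $v$: on the set $\{v>M+\epsilon\}$ one has $v-M>\epsilon$, so $\Phi(v-M)\ge\Phi(\epsilon)>0$ provided $\Phi$ is chosen strictly positive on $(0,\infty)$ (which is compatible with all the requirements above). Hence $\{v>M+\epsilon\}\subset\{\Phi(v-M)\ge\Phi(\epsilon)\}$, a set of zero capacity by the previous paragraph, so $\capp(\{v>M+\epsilon\})=0$ for every $\epsilon>0$. Letting $\epsilon\to 0$ through a sequence and using subadditivity of capacity, $\capp(\{v>M\})=0$, which is exactly the assertion that $v\le M$ cap-quasi everywhere. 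The main obstacle I anticipate is purely bookkeeping: making sure the constant shift by $M$ stays inside the hypotheses of Lemmas~\ref{tech1} and~\ref{tech2} (e.g.\ that one is applying Lemma~\ref{tech1} to $v\in W$ with an admissible $C^2$ nonlinearity rather than to $v-M\notin W$), and checking that the cap-quasicontinuous representative is genuinely well defined so that the passage from "a.e." to "cap-quasi everywhere" is legitimate — both of which are routine given the density of $C_c^\infty([0,T]\times\Omega)$ in $W$ used throughout the paper.
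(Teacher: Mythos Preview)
Your proposal is correct and follows essentially the same route as the paper: choose a bounded $\Phi\in C^2(\RR)$ with $\Phi',\Phi''$ bounded, $\Phi\equiv 0$ on $(-\infty,0]$ and $\Phi>0$ on $(0,\infty)$, apply Lemma~\ref{tech1} (to $v$ with the nonlinearity $s\mapsto\Phi(s-M)$) to get $\Phi(v-M)\in S\cap L^\infty(Q)$, observe it vanishes a.e., and invoke Lemma~\ref{tech2} to conclude it vanishes cap-quasi everywhere, whence $v\le M$ cap-quasi everywhere. Your explicit handling of the constant shift and of the final $\epsilon\to 0$ step is more careful than the paper's, but the argument is the same.
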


\begin{proof}
Let us take a bounded, nondecreasing function $\Phi\in C^2(\RR)$ such that \(\Phi', \Phi''\) are bounded,  $\Phi(s)\equiv 0$ if $s\leq 0$ and $\Phi(s)>0$ if $s>0$. By Lemma~\ref{tech1} we deduce that $\Phi(v-M)\in S\cap L^\infty(Q)$. Since $v-M\leq 0$ almost everywhere, then $\Phi(v-M) = 0$ almost everywhere. It follows from Lemma~\ref{tech2} that the unique cap-quasicontinuous representative of $\Phi(v-M)$ is the function identically zero, that is $\Phi(v-M)=0$ cap-quasi everywhere. Therefore $v\leq M$ cap-quasi everywhere.
\end{proof}

We now study the pointwise convergence of sequences in \(S\): 

\begin{lemma}
\label{tech3}
For every bounded sequence $(z_n)$ in $S\cap L^\infty(Q)$, if \(z_n \to z\) in \(S\),
then there exists a subsequence $(z_{n_k})$ converging to $z$ cap-quasi everywhere.
\end{lemma}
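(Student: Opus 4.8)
The plan is to bound, via the capacitary estimate of Lemma~\ref{tech2}, the $p$-parabolic capacity of the super-level sets of the differences $z_n-z$, and then to pass to a subsequence along which a Borel--Cantelli type argument for $\capp$ gives convergence cap-quasi everywhere.

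First I would check that $z\in\liq$: since $(z_n)$ is bounded in $\liq$ and $z_n\to z$ in $S\subset L^p(Q)$, taking weak limits in $L^q(Q)$ and letting $q\to\infty$ yields $\|z\|_{\liq}\le\sup_n\|z_n\|_{\liq}$. Hence $w_n:=z_n-z$ belongs to $S\cap\liq$, is bounded in $\liq$, and $\sigma_n:=\|w_n\|_S\to 0$. The core of the argument is to prove that the quantity $[w_n]$ of Lemma~\ref{tech2} tends to $0$. Fixing for each $n$ a decomposition $(w_n)_t=\dive(G_n)+g_n$ with $G_n\in L^{p'}(Q)$, $g_n\in L^1(Q)$ and $\|G_n\|_{L^{p'}(Q)}+\|g_n\|_{L^1(Q)}\le 2\sigma_n$, the first three terms in the definition of $[w_n]$ relative to this decomposition clearly vanish, since $\|w_n\|_{\psob}^p\le\sigma_n^p$, $\|\dive(G_n)\|_{L^{p'}(0,T;W^{-1,p'}(\Omega))}^{p'}\le(2\sigma_n)^{p'}$ and $\|w_n\|_{\liq}\|g_n\|_{L^1(Q)}\le 2\sigma_n\sup_n\|w_n\|_{\liq}$; hence everything reduces to showing that $\|w_n\|_{L^\infty(0,T;L^2(\Omega))}\to 0$.

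For that last point I would invoke an energy identity. Although $w_n$ need not belong to $W$, the extra information $w_n\in\liq$ makes it possible to mollify $w_n$ in the time variable, write the energy identity for the regularizations, and pass to the limit (using $\nabla w_n\in L^p(Q)$, $G_n\in L^{p'}(Q)$, $g_n\in L^1(Q)$ and $w_n\in\liq$), obtaining for all $a,b\in[0,T]$
\[
\tfrac12\|w_n(b)\|_{L^2(\Omega)}^2-\tfrac12\|w_n(a)\|_{L^2(\Omega)}^2=-\int_a^b\!\!\into G_n\cdot\nabla w_n\,dx\,dt+\int_a^b\!\!\into g_n\,w_n\,dx\,dt .
\]
Therefore $\bigl|\,\|w_n(b)\|_{L^2(\Omega)}^2-\|w_n(a)\|_{L^2(\Omega)}^2\,\bigr|\le 2\|G_n\|_{L^{p'}(Q)}\|\nabla w_n\|_{L^p(Q)}+2\|g_n\|_{L^1(Q)}\|w_n\|_{\liq}=:\delta_n$, with $\delta_n\to 0$. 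Picking $a_n\in[0,T]$ with $\|w_n(a_n)\|_{L^2(\Omega)}^2\le\frac1T\|w_n\|_{L^2(Q)}^2$ and recalling that $\|w_n\|_{L^2(Q)}\to 0$ (which follows from $\|w_n\|_{\psob}\to 0$ and the uniform $\liq$ bound), one gets $\|w_n\|_{L^\infty(0,T;L^2(\Omega))}^2\le\frac1T\|w_n\|_{L^2(Q)}^2+\delta_n\to 0$, and thus $[z_n-z]=[w_n]\to 0$.

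Finally I would extract a subsequence $(z_{n_j})$ with $[z_{n_j}-z]\le 16^{-j\max\{p,p'\}}$, so that Lemma~\ref{tech2} applied to $z_{n_j}-z\in S\cap\liq$ with level $4^{-j}$ gives
\[
\capp\bigl(\{|z_{n_j}-z|>4^{-j}\}\bigr)\le\frac{C}{4^{-j}}\,16^{-j}=C\,4^{-j},
\]
the super-level sets being understood for the cap-quasicontinuous representatives of Lemma~\ref{tech2}, which agree cap-quasi everywhere with the pointwise differences. Since $\sum_j 4^{-j}<\infty$ and $\capp$ is countably subadditive, the set $N=\bigcap_{m\ge1}\bigcup_{j\ge m}\{|z_{n_j}-z|>4^{-j}\}$ has zero $p$-parabolic capacity, and every $(t,x)\in Q\setminus N$ satisfies $|z_{n_j}(t,x)-z(t,x)|\le 4^{-j}$ for all large $j$, hence $z_{n_j}(t,x)\to z(t,x)$. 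The delicate point is the third step: deducing convergence in $L^\infty(0,T;L^2(\Omega))$ from convergence in $S$ together with the $\liq$ bound, which forces one to establish and carefully justify the energy identity for functions lying in $S\cap\liq$ rather than in $W$.
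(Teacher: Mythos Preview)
Your proposal is correct and follows essentially the same route as the paper: first show that $[z_n-z]\to 0$, then apply the capacitary estimate of Lemma~\ref{tech2} to the level sets of $z_{n_k}-z$ along a rapidly converging subsequence, and conclude by a Borel--Cantelli argument for $\capp$. The paper's proof is considerably more terse: it simply asserts that the $L^\infty(Q)$ bound together with $z_n\to z$ in $S$ gives $[z_n-z]\to 0$, without isolating the $L^\infty(0,T;L^2(\Omega))$ term, and then runs the same subadditivity argument with levels $2^{-k}$ instead of your $4^{-j}$. Your additional work on the energy identity for functions in $S\cap L^\infty(Q)$ is exactly what is needed to justify that assertion, and your identification of this as the delicate point is accurate; the paper implicitly relies on this fact (which ultimately goes back to the results of \cite{pe2}) without spelling it out.
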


\begin{proof}
This is  a classical argument; we present it here for the convenience of the reader. Since \((z_n)\) is bounded in \(L^\infty(Q)\) and \(z_n \to z\) in \(S\), we have (with the notations of Lemma~\ref{tech2})
\[
[z_n - z] \to 0.
\]
Take a subsequence $(z_{n_k})$ such that
$$
\sum\limits_{k=1}^\infty  2^k  \max{\big\{ [z_{n_k}-z]^{\frac1p}\,,\,[z_{n_k}-z]^{\frac1{p'}}\big\}}
<\infty\,.
$$
Define the sets
$$
E_k= \big \{ (x,t)\in Q\,:\, | z_{n_k}- z |>2^{-k}\big \}, \quad F_m=\bigcup\limits_{k=m}^\infty E_k,\quad F_\infty= \bigcap\limits_{m=0}^\infty F_m.
$$
By the subadditivity of the parabolic capacity and by the capacitary estimate from Lemma~\ref{tech2}, we have
$$
\capp(F_m)\leq \sum\limits_{k=m}^\infty \capp(E_k)\leq C \sum\limits_{k=m}^\infty 2^k \max{\big\{  [z_{n_k}-z]^{\frac1p}, [z_{n_k}-z]^{\frac1{p'}}\big\}}.
$$
Hence $\capp(F_m)\to 0$, which implies $\capp(F_\infty)=0$. Since $z_{n_k}(x)\to z(x)$ for every $x\in Q\setminus F_\infty$, the conclusion follows.
\end{proof}

We can now show how to extend the class of test functions in \rife{req}.

\begin{proposition}\label{testW} If $u$ is a renormalized solution of \eqref{renpb}, then
\begin{multline}\label{req2}
- \int_0^T \langle T_k(u), v_t \rangle \,dt +\int_Q \adixt {\nabla T_k(u)}\nabla v\,dxdt
\\
= \int_Q  v\,d\mu+\int_Q   v\,d\la_k+\into T_k(u_0)v(0, x)\,dx 
\end{multline}
for every $v\in W\cap\parelle\infty$ such that $v=0$ on $\{T\} \times \Omega$.
\end{proposition}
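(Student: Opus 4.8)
The plan is to extend the class of test functions in \eqref{req} from $C_c^\infty([0,T)\times\Omega)$ to $v\in W\cap\parelle\infty$ with $v(T,\cdot)=0$ by a density and regularization argument, carefully tracking each term. The natural idea is to approximate such a $v$ by smooth functions in a way compatible with all the pairings appearing in \eqref{req}: the duality with $\mu$ and $\la_k$ (which requires convergence in the cap-quasicontinuous sense, since $\mu$ and $\la_k$ are only diffuse), the convergence in $\psob$ (for the elliptic term, since $\adixt{\nabla T_k(u)}\in L^{p'}(Q)$), and the parabolic term, which should be read as a duality pairing $-\int_0^T\langle T_k(u),v_t\rangle\,dt$ with $T_k(u)\in L^p(0,T;V)$ and $v_t\in W'$ (this is precisely the pairing written in \eqref{req2}, and it coincides with $-\int_Q T_k(u)\vfi_t$ on smooth $\vfi$).

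First I would recall, via \cite[Theorem~2.11]{dpp}, that $C_c^\infty([0,T]\times\Omega)$ is dense in $W$; by a standard time-truncation (multiplying by a cut-off $\zeta_m(t)$ vanishing near $t=T$ and letting $m\to\infty$) one reduces to approximating $v$ by $\vfi_n\in C_c^\infty([0,T)\times\Omega)$ with $\vfi_n\to v$ in $W$. For such $\vfi_n$, equation \eqref{req} holds. Then I would pass to the limit term by term: the term $\int_Q\adixt{\nabla T_k(u)}\nabla\vfi_n$ converges to $\int_Q\adixt{\nabla T_k(u)}\nabla v$ since $\nabla\vfi_n\to\nabla v$ in $L^p(Q)$ and $\adixt{\nabla T_k(u)}\in L^{p'}(Q)$; the term $\int_\Omega T_k(u_0)\vfi_n(0,x)\,dx$ converges because $\vfi_n\to v$ in $W\subset C^0([0,T];\elle2)$ so $\vfi_n(0,\cdot)\to v(0,\cdot)$ in $\elle2$ (and $T_k(u_0)$ is bounded); and the parabolic term $-\int_Q T_k(u)(\vfi_n)_t$, rewritten as $-\int_0^T\langle T_k(u),(\vfi_n)_t\rangle\,dt$, converges to $-\int_0^T\langle T_k(u),v_t\rangle\,dt$ since $(\vfi_n)_t\to v_t$ in $W'$ (using $\|v_t\|_{W'}\leq\|v\|_{L^p(0,T;V)}$ from the footnote) and $T_k(u)\in L^p(0,T;V)\subset W$.

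The main obstacle is the convergence of the measure terms $\int_Q\vfi_n\,d\mu$ and $\int_Q\vfi_n\,d\la_k$ to $\int_Q v\,d\mu$ and $\int_Q v\,d\la_k$. Since $\mu$ and $\la_k$ are diffuse but not in $L^1$, weak convergence in $W$ is not enough; one needs convergence of (a subsequence of) the cap-quasicontinuous representatives $\mu$-a.e.\ and $\la_k$-a.e., together with a dominating bound to invoke dominated convergence. To get this I would choose the approximating sequence more carefully: extract a subsequence of $\vfi_n$ converging to $v$ cap-quasi everywhere (possible because $\vfi_n\to v$ in $W$ and hence, by the capacitary estimate of Lemma~\ref{tech2} applied as in the proof of Lemma~\ref{tech3}, up to a subsequence cap-quasi everywhere), so that $\vfi_n\to v$ both $\mu$-a.e.\ and $\la_k$-a.e.\ because $\mu,\la_k$ are diffuse; for the domination, the construction of the approximants (cut-off plus mollification, or a truncation in $W$) can be arranged so that $\|\vfi_n\|_{L^\infty(Q)}$ stays bounded by a constant times $\|v\|_{L^\infty(Q)}$, which gives an $L^\infty$ bound dominating with respect to the finite measures $|\mu|$ and $|\la_k|$. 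Then the dominated convergence theorem yields $\int_Q\vfi_n\,d\mu\to\int_Q v\,d\mu$ and likewise for $\la_k$, completing the passage to the limit and establishing \eqref{req2}. I would also remark that the identity extends by the same reasoning to $v\in W\cap\parelle\infty$ vanishing on $\{T\}\times\Omega$ without further regularity, which is the precise statement claimed.
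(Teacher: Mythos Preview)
Your overall strategy is right, and most of the limit passages are handled correctly. The genuine gap is in the domination step for the measure terms. You assert that the approximants $\vfi_n\in C_c^\infty([0,T)\times\Omega)$ can be chosen to converge to $v$ in $W$ \emph{and} to remain uniformly bounded in $L^\infty(Q)$. This is exactly the delicate point, and it is not justified by the density result \cite[Theorem~2.11]{dpp}, which gives no $L^\infty$ control. Your two suggested fixes both run into trouble: a smooth truncation $\Phi(\vfi_n)$ is bounded, but $(\Phi(\vfi_n))_t = \Phi'(\vfi_n)(\vfi_n)_t$ produces, via \eqref{cap-tec1}, an extra term $\Phi''(\vfi_n)\,G\cdot\nabla\vfi_n$ which is only in $L^1(Q)$ and in general not in $L^{p'}(0,T;V')$, so $\Phi(\vfi_n)$ need not lie in $W$ and you lose the $W$-convergence you rely on for the parabolic term; direct mollification, on the other hand, does not obviously preserve membership in $W^{1,p}_0(\Omega)$ without first cutting off in space, which again perturbs the $W$-convergence in a way that needs separate justification.

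The paper resolves this by giving up $W$-convergence: it sets $w_n=\Phi(v_n)$ with $v_n\to v$ in $W$ and uses Lemma~\ref{tech1} to get $w_n\to v$ in the larger space $S$, together with the uniform bound $\|w_n\|_\infty\le\|\Phi\|_\infty$. The parabolic term is then handled because $T_k(u)\in L^p(0,T;V)\cap L^\infty(Q)$, so it pairs with elements of $L^{p'}(0,T;W^{-1,p'})+L^1(Q)$; the measure terms go through by cap-q.e.\ convergence (from $v_n\to v$ in $W$, transported through $\Phi$) plus dominated convergence. A separate time cut-off $\xi_\vep$ is used to enforce vanishing at $t=T$, since $w_n$ need not vanish there. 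If you want to repair your argument, the cleanest route is precisely this: accept that the bounded approximants converge only in $S$, and exploit the boundedness of $T_k(u)$ to make that suffice for the time-derivative term.
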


\begin{proof}
Since $C_c^\infty([0,T]\times \Omega)$ is dense in $W$, there exists  a sequence $(v_n)$ in $C^\infty_c([0,T]\times \Omega)$ converging to $v$ in $W$. Let $M$ such that $\|v\|_{\parelle\infty}\leq M$, and take a function  $\Phi: \RR\to \RR$ which is  $C^\infty$ and  such that $\Phi(s)=s $ when $|s|<2M$ and $\Phi'$ has compact support.  By Lemma~\ref{tech1}, we have that $\Phi(v_n)\to \Phi(v)=v$ in $S$.
Let us  then  call $w_n=\Phi(v_n)$.
 Note that \(w_n\) is not an admissible function since it need not vanish on \(\{T\} \times \Omega\). For this purpose, choose 
 $\vfi=w_n\xi$ in \eqref{req} where $\xi\in W^{1,\infty}(0,T)$ and is compactly supported in $[0,T)$;  \(\varphi\) can be used as a test function in view of Remark~\ref{remark4.1}. We get  
\begin{multline*}
-\int_Q T_k(u)w_n\xi_t\, dxdt -\int_Q T_k(u)\xi(w_n)_t\,dxdt +\int_Q \adixt {\nabla T_k(u)}\nabla w_n\xi\,dxdt
\\
= \int_Q w_n\xi\,d\mu+\int_Q w_n\xi\,d\la_k+\into T_k(u_0)w_n(0, x)\xi(0)\,dx .
\end{multline*}
Since $w_n\to v$ in $S$ and $(w_n)$ is uniformly bounded, we can  pass to the limit in the left hand side. Moreover, since $v_n\to v$ in $W$, there exists a subsequence such that $ v_n\to  v$ cap-quasi everywhere, hence $\mu$-a.e. and $\la_k$-a.e., since the two measures are diffuse. Being $\Phi$ smooth, we have $w_n\to \Phi( v)= v$ 
$\mu$-a.e. and $\la_k$-a.e. Since $(w_n)$ is uniformly bounded we can pass to the limit in the right hand side by dominated convergence, obtaining
\begin{multline*}
-\int_Q T_k(u)v\xi_t\, dxdt -   \int_0^T \langle T_k(u)\xi, v_t \rangle \,dt +\int_Q \adixt {\nabla T_k(u)}\nabla v \, \xi\,dxdt
\\
= \int_Q  v\xi\,d\mu+\int_Q  v\xi\,d\la_k+\into T_k(u_0)v(0, x)\xi(0)\,dx .
\end{multline*}
 Given \(\eps \in (0, T)\), we now apply this identity with the function \(\xi_\eps : [0, T] \to \RR\) given by $\xi_\vep(t)= 1-\frac{(t-T+\vep)^+}\vep$; in particular $\xi_\vep=1$ in $(0,T-\vep)$, $\xi_\vep(T)=0$ and $\xi_\vep\to 1$. Since $v\in C^0([0,T];\elle2)$ and $v(T)=0$, we have
$$
\left| \int_Q T_k(u)v(\xi_\vep)_t \, dxdt\right|  \leq \frac1\vep \int_{T-\vep}^T \into | T_k(u)v| \, dxdt\leq k \frac1\vep \int_{T-\vep}^T \|v(t)\|_{\elle1} \, dt \to 0.
$$
By dominated convergence we can pass to the limit in all other terms, hence we deduce \eqref{req2}.
\end{proof}

One of  the main roles of Proposition ~\ref{testW} is to give us a way to use test functions of the form $\psi(u)$.
A difficulty arises from the fact that $u$ (and also $T_k(u)$) may not have a cap-quasi continuous representative. In order to overcome this point, our strategy will be  to use approximations through so-called Steklov time-averages for functions 
 \(z : (0, T) \times \Omega \to \RR\). More precisely, given \(\eps \in (0, T)\), define for every \(h \in (0, \eps)\) the functions \(z_h : (0, T-\eps) \times \Omega \to \RR\) and  \(z_{-h} : [\eps, T] \times \Omega \to \RR\) by
\be\label{stek}
  z_h(t,x)= \frac1h \int_t^{t+h} z(s, x) \, ds \quad \text{and} \quad z_{-h}(t,x)= \frac1h \int_{t-h}^t z(s,x) \, ds.
\ee

In the context of parabolic equations we can deal with Steklov averages as follows. Given \(w : Q \to \RR\), if \(\varphi\) is a function with compact support in \([0, T) \times \Omega\), then \(w_h \varphi\) has a meaning in \(Q\) even though the function \(w_h\) is not defined on \([T-h, T] \times \Omega\). Concerning \(w_{-h} \varphi\), one can consider this function on the parabolic cylinder $(\epsilon, T) \times \Omega$ for every \(h \in (0, \epsilon)\). Once we get the desired estimates independently of the parameter \(h\), we can let \(h \to 0\) and then \(\epsilon \to 0\). Another approach consists in taking a sequence of smooth functions \((w_{0, j})\) converging in \(L^1(\Omega)\) to \(w(0, \cdot)\) and for each \(j\) we take an extension of \(w(t, \cdot)\) as \(w_{0, j}\) if \(t < 0\); once we get uniform estimates with respect to \(j\), we can let \(j \to \infty\).

Observe that, for every $z\in L^p(0,T; V )$, we have $z_h\in W$, hence $z_h$ admits a cap-quasi continuous representative. In addition, whenever $z\in L^\infty(Q)$, we have $|z_h|\leq \|z\|_\infty$ q.e. (i.e.~except of a set of zero capacity). Indeed, for any $M$ such that $|z|\leq M$ a.e., we also have $|z_h|\leq M$ a.e. and since $z_h\in W$  we deduce from Corollary~\ref{tech4}  that $|z_h|\leq M$ q.e.\@ as well. 
Some further property of the Steklov averages with respect to capacity will be useful. For instance,

\begin{lemma}\label{cap-stek}
Let $z\in S$. Then, for every $\psi\in C_c^\infty(0,T)$,
\[
z_h \psi \to z \psi \quad \text{in S}.
\]
If in addition \(z \in L^\infty(Q)\), then for every sequence \((h_n)\) of positive numbers converging to \(0\), there exists a subsequence $(h_{n_k})$ such that 
\[
{z_{h_{n_k}}} \to  z \quad \text{q.e. in $Q$.}
\]
\end{lemma}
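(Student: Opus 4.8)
The plan is to prove the two assertions of Lemma~\ref{cap-stek} separately, both relying on the capacitary estimate of Lemma~\ref{tech2} applied to the space $S$.

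\textbf{First assertion: $S$-convergence of $z_h\psi$.} Fix $\psi\in C_c^\infty(0,T)$ and let $z\in S$, so $z\in L^p(0,T;V)$ and $z_t=\dive(G)+g$ with $G\in L^{p'}(Q)$ and $g\in\luq$. The convergence in $L^p(0,T;V)$ of $z_h\psi$ to $z\psi$ is the standard property of Steklov averages (time-translations) in Bochner spaces, since $z\in L^p(0,T;V)$ and $\psi$ is compactly supported in $(0,T)$ (so the truncation at $T-h$ is irrelevant for $h$ small). For the time-derivative part, I would compute $(z_h\psi)_t=(z_h)_t\,\psi+z_h\,\psi_t$; the term $z_h\psi_t$ converges to $z\psi_t$ in $L^p(0,T;V)$, hence in $L^{p'}(0,T;\duale)+\luq$; and $(z_h)_t\psi=(z_t)_h\,\psi=\bigl(\dive(G_h)+g_h\bigr)\psi$, where $G_h\to G$ in $L^{p'}(Q)$ and $g_h\to g$ in $\luq$ (again the Steklov-average convergence in those spaces). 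Writing $\dive(G_h)\psi=\dive(G_h\psi)-G_h\cdot\nabla\psi$ and noting $\nabla\psi=0$ (since $\psi$ depends only on $t$, $\nabla_x\psi=0$; here one must be slightly careful about $\psi'$ appearing in the $g$-type part) gives convergence of the $\dive$-part in $L^{p'}(0,T;\duale)$ and of the $g_h\psi$ part in $\luq$. Collecting these, $(z_h\psi)_t\to(z\psi)_t$ in $L^{p'}(0,T;\duale)+\luq$, which together with the $L^p(0,T;V)$-convergence is precisely $z_h\psi\to z\psi$ in $S$.

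\textbf{Second assertion: q.e.\ convergence when $z\in L^\infty(Q)$.} Here I would choose a single $\psi\in C_c^\infty(0,T)$ with $0\le\psi\le1$ and $\psi\equiv1$ on a prescribed subinterval $(\delta,T-\delta)$; then it suffices to prove the q.e.\ convergence on $(\delta,T-\delta)\times\Omega$, and let $\delta\to0$ through a countable sequence (the exceptional sets of capacity zero form a countable union, still of capacity zero). On $(\delta,T-\delta)\times\Omega$ we have $z_h\psi=z_h$, and by the first part $z_h\psi\to z\psi=z$ in $S$; moreover $z_h\psi$ is bounded in $L^\infty(Q)$ uniformly in $h$, since $|z_h|\le\|z\|_\infty$ (as observed right before the lemma, using Corollary~\ref{tech4}) and $0\le\psi\le1$. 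Thus $(z_h\psi)$ is a bounded sequence in $S\cap L^\infty(Q)$ converging in $S$ to $z\psi$, and Lemma~\ref{tech3} furnishes a subsequence $(h_{n_k})$ along which $z_{h_{n_k}}\psi\to z\psi$ cap-quasi everywhere, hence $z_{h_{n_k}}\to z$ q.e.\ on $(\delta,T-\delta)\times\Omega$. A diagonal argument over $\delta=1/m$ yields a single subsequence working q.e.\ on all of $Q$.

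\textbf{Main obstacle.} The delicate point is the bookkeeping of the time-derivative term in the first assertion: one must track how the Steklov averaging interacts with the decomposition $z_t=\dive(G)+g$ and with differentiation of the cutoff $\psi$, ensuring that no spurious unbounded term appears and that each piece lands in the correct space ($L^{p'}(0,T;\duale)$ or $\luq$). Once the $S$-convergence is cleanly established, the q.e.\ statement is a routine consequence of the capacitary estimate Lemma~\ref{tech2} together with Lemma~\ref{tech3}, exactly as in the proof of that lemma; the only extra care needed is the localization via $\psi$ and the diagonalization to cover $Q$ up to a single null set.
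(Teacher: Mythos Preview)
Your proposal is correct and follows essentially the same route as the paper: you expand $(z_h\psi)_t$ via the product rule and the identity $(z_h)_t=(z_t)_h=\dive(G_h)+g_h$, then use the standard Steklov-average convergences $G_h\to G$ in $L^{p'}(Q)$, $g_h\to g$ in $L^1(Q)$, $z_h\to z$ in $L^p(0,T;V)$ (the paper places $z_h\psi_t$ directly in the $L^1(Q)$ piece, but your detour through $L^p(0,T;V)\hookrightarrow L^1(Q)$ is equally valid). For the second assertion your localization by $\psi\equiv 1$ on $(\delta,T-\delta)$, invocation of Lemma~\ref{tech3}, and diagonal extraction over $\delta=1/m$ are exactly what the paper does with the sequence $\psi_j$.
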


\begin{proof}
If $\psi\in C_c^\infty(0,T)$, then $z_h \psi \in S$. In particular, if $z_t= \dive(G)+g$, with $G\in L^{p'}(Q)$, $g\in L^1(Q)$, since $\psi$ has compact support in $(0,T)$ we have 
\[
(z_h \psi)_t= \dive(G_h\psi)+ g_h\psi + z_h \psi_t
\]
where $G_h$ and $g_h$ are Steklov averages of $G$ and $g$, respectively.  Since $G_h\to G$ in $L^{p'}(Q)$, $g_h\to g$ in $L^1(Q)$ and $z_h\to z$ in $L^1(Q)$, we have that $(z_h\psi)_t $ converges to $(z\psi)_t$ in $L^{p'}(0,T;\duale)+L^1(Q)$; on the other hand  $z_h$ converges to $z$ in $L^p(0,T;V)$ so that $z_h\psi\to z\psi $ in $L^p(0,T;V)$. Therefore, we conclude that $z_h\psi \to z\psi$ in $S$. If in addition  $z \in L^\infty(Q)$, then $(z_{h_n} \psi)$ is bounded in $S\cap L^\infty(Q)$ and,
from Lemma~\ref{tech3},  we conclude that  $z_{h_n}\psi$ admits a subsequence converging cap-quasi everywhere. We now take $\psi_j\in C_c^\infty(0,T)$ such that $\psi_j = 1$ in $(\frac 1j,T-\frac1j)$; there exists a subsequence $(h_{n_j})$ and a set $F_j \subset Q$ such that $\capp(F_j)=0$ and $z_{h_{n_j}}\to z$ in $\big( (\frac 1j,T-\frac1j)\times \Omega\big) \setminus F_j$. Using a diagonal argument we can construct  a subsequence $(h_{n_k})$ such that $z_{h_{n_k}}\to z$ in $Q\setminus F$, where $F=\bigcup\limits_{j=1}^\infty F_j$ and $\capp(F)=0$.
\end{proof}

The idea of using Steklov averages in connection with the renormalized formulation is  developed in \cite{BlPo}. Following this latter paper, we deduce in particular the following result:

\begin{lemma}\label{testu}
If $u$ is a renormalized solution of \eqref{renpb}, then
\begin{multline}\label{Sh}
- \int_Q \Psi(T_k(u))\xi_t \, dxdt - \into \Psi(T_k(u_0))\xi(0, x)\,dx+ \int_Q \adixt{\nabla T_k(u)}\nabla (\psi(T_k(u))\xi) \, dxdt
\\
\leq \int_Q {(\psi(T_k(u)))_h}\xi\, d\mu+\|\psi\|_\infty  \|\xi\|_\infty\|\la_k\| + o(1)_h,
\end{multline} 
and
\begin{multline}\label{S-h}
- \int_Q \Psi(T_k(u))\xi_t  \, dxdt - \into \Psi(T_k(u_0))\xi(0, x)\,dx+ \int_Q \adixt{\nabla T_k(u)}\nabla (\psi(T_k(u))\xi)\, dxdt
\\
\geq \int_Q {(\psi(T_k(u)))_{-h}}\xi\, d\mu - \|\psi\|_\infty  \|\xi\|_\infty\|\la_k\| - o(1)_h,
\end{multline} 
for every nondecreasing $\psi\in W^{1,\infty}(\RR)$ and every nonnegative $\xi \in C^\infty_c([0,T)\times \overline \Omega)$ such that $\psi(0)\xi=0 $ on $(0, T) \times \partial \Omega$,   where   $\Psi(r)=\int_0^r \psi(s)ds$ and $o(1)_h\to 0$ as $h\to 0$. 
\end{lemma}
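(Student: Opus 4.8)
The plan is to use, as test functions in the extended formulation \eqref{req2} of Proposition~\ref{testW}, the forward (resp.\ backward) Steklov averages $v=(\psi(T_k(u)))_h\,\xi$ and $v=(\psi(T_k(u)))_{-h}\,\xi$, and then to let $h\to 0$ while keeping the term carrying $\mu$ in its averaged form. These are admissible test functions: since $\psi$ is Lipschitz and $T_k(u)\in\psob\cap\parelle\infty$, the function $\psi(T_k(u))$ lies in $L^p(0,T;V)$ and is bounded, so its Steklov average belongs to $W\cap\parelle\infty$ (recall that $z\in L^p(0,T;V)$ implies $z_h\in W$); for $h$ small relative to the time-support of $\xi$ the product vanishes on $\{T\}\times\Omega$, and the hypothesis $\psi(0)\xi=0$ on $(0,T)\times\partial\Omega$ is exactly what makes $(\psi(T_k(u)))_{\pm h}\,\xi$ have zero trace on the lateral boundary. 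Crucially, we integrate only the Steklov averages---never $T_k(u)$ or $\psi(T_k(u))$ itself, which need not have a cap-quasicontinuous representative---against the diffuse measures $\mu$ and $\la_k$; this is meaningful because $(\psi(T_k(u)))_{\pm h}\in W$.

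On the right-hand side of \eqref{req2} the term carrying $\mu$ is left untouched, which is why $\int_Q(\psi(T_k(u)))_{\pm h}\,\xi\,d\mu$ appears in \eqref{Sh}--\eqref{S-h}. Since $|(\psi(T_k(u)))_{\pm h}|\le\|\psi\|_\infty$ a.e.\ and hence, by Corollary~\ref{tech4}, cap-quasi everywhere, the term carrying $\la_k$ is bounded by $\|\psi\|_\infty\|\xi\|_\infty\|\la_k\|$. For the diffusion term, $\adixt{\nabla T_k(u)}\in L^{p'}(Q)$ by \eqref{cont}, and, since the spatial gradient commutes with the time-Steklov average, $\nabla\big((\psi(T_k(u)))_{\pm h}\,\xi\big)=(\psi(T_k(u)))_{\pm h}\,\nabla\xi+\xi\,(\psi'(T_k(u))\nabla T_k(u))_{\pm h}\to\nabla\big(\psi(T_k(u))\,\xi\big)$ in $L^p(Q)$; hence this term converges to $\int_Q\adixt{\nabla T_k(u)}\,\nabla\big(\psi(T_k(u))\,\xi\big)\,dxdt$, with an error $o(1)_h$. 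The initial term $\int_\Omega T_k(u_0)(\psi(T_k(u)))_{\pm h}(0,x)\,\xi(0,x)\,dx$ will be reproduced, up to $o(1)_h$, by the left-endpoint contribution generated in the analysis of the parabolic term below, and the two cancel; here one uses the extension of $T_k(u)$ to negative times by (a smooth approximation of) $T_k(u_0)$, as in the discussion of Steklov averages preceding Lemma~\ref{cap-stek}.

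The heart of the matter is the parabolic term $A_h:=-\int_0^T\langle T_k(u),\partial_t\big((\psi(T_k(u)))_h\,\xi\big)\rangle\,dt$. Since $\partial_t\big((\psi(T_k(u)))_h\,\xi\big)\in\parelle\infty$ and $T_k(u)\in\parelle\infty$, this is an ordinary integral; writing $v=T_k(u)$ and $w=\psi(v)$, we have $\partial_t(w_h\,\xi)=(\partial_t w_h)\,\xi+w_h\,\xi_t$ with $\partial_t w_h(t,x)=\tfrac1h\big(w(t+h,x)-w(t,x)\big)$. A discrete integration by parts in time---shifting the time variable by $h$, dropping the right endpoint (where $\xi=0$) and extracting the left endpoint (the companion of the initial term above)---rewrites $A_h$, up to $o(1)_h$ and that initial contribution, as
\[
\tfrac1h\int_0^T\!\!\int_\Omega\psi(v(t))\,\big(v(t)-v(t-h)\big)\,\xi(t-h)\,dxdt
\]
plus two further contributions converging to $-\int_Q v\psi(v)\xi_t\,dxdt$ and $+\int_Q v\psi(v)\xi_t\,dxdt$ respectively, which cancel. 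To the displayed quantity I would apply the convexity of $\Psi$ (recall $\Psi'=\psi$ is nondecreasing): the pointwise inequality $\psi(a)(a-b)\ge\Psi(a)-\Psi(b)$ with $a=v(t)$, $b=v(t-h)$, together with $\xi\ge0$, bounds it below by
\[
\tfrac1h\int_0^T\!\!\int_\Omega\big(\Psi(v(t))-\Psi(v(t-h))\big)\,\xi(t-h)\,dxdt,
\]
which converges, by a telescoping argument using the extension past $t=0$ and $\xi(T)=0$, to $-\int_Q\Psi(T_k(u))\xi_t\,dxdt-\int_\Omega\Psi(T_k(u_0))\xi(0,x)\,dx$. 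Feeding this lower bound for $A_h$ into \eqref{req2}, cancelling the initial terms, and using the estimates of the previous paragraph, one obtains \eqref{Sh}. Inequality \eqref{S-h} follows in the same way from the backward average $(\psi(T_k(u)))_{-h}$: the time-shift now evaluates $\psi$ at $t-h$, so one invokes instead the reverse convexity inequality $\psi(b)(a-b)\le\Psi(a)-\Psi(b)$ and gets the opposite estimate.

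The main obstacle is precisely this last point: organizing the discrete integration by parts so that convexity is applied to a single one-signed quantity, verifying that the remaining ``non-convex'' pieces cancel in the limit $h\to0$, and checking that the left-endpoint time contribution matches the initial datum term of \eqref{req2}---so that no information about the time-trace of $T_k(u)$, beyond the extension device, is needed. Everything else---admissibility of $v$, $L^p$-convergence of the Steklov averages, the cap-quasi-everywhere bound by $\|\psi\|_\infty$---is routine given Proposition~\ref{testW}, Lemma~\ref{cap-stek} and Corollary~\ref{tech4}.
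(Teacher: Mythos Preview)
Your approach is essentially the same as the paper's: both use $v=(\psi(T_k(u)))_h\,\xi$ (resp.\ the backward average) as test function in \eqref{req2}, leave the $\mu$-term in averaged form, bound the $\la_k$-term via the cap-quasi everywhere estimate $|(\psi(T_k(u)))_h|\le\|\psi\|_\infty$, pass to the limit in the diffusion term by $L^p$-continuity of Steklov averages, and handle the parabolic term through the convexity of $\Psi$. The only difference is packaging: the paper outsources the entire parabolic-term estimate to \cite[Lemma~2.1]{BlPo} (for \eqref{Sh}) and \cite[Lemma~2.3]{BlPo} (for \eqref{S-h}), which encapsulate precisely the discrete-integration-by-parts plus convexity-inequality argument you sketch by hand, including the handling of the initial datum.
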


\begin{proof}
We choose in \eqref{req2} 
\[
v(t,x)=\xi \, (\psi(T_k(u)))_h= \xi\frac1h \int_t^{t+h} \psi(T_k(u))(s, x) \, ds,
\]
where $\xi$, $\psi$ have the properties stated above. Using \cite[Lemma~2.1]{BlPo} we have
\begin{multline*}
\liminf\limits_{h\to 0} \bigg\{ -\int_Q \left(T_k(u)-T_k(u_0)\right) \bigg(\xi\frac1h \int_t^{t+h} \psi(T_k(u))(s, x) \, ds \bigg)_t\,dxdt \bigg\}
\\
\geq - \int_Q \xi_t \int_0^{T_k(u)}\psi(r)dr\,dxdt- \into \xi(0, x) \int_0^{T_k(u_0)}\psi(r) \,drdx.
\end{multline*}
Therefore, we get from \rife{req2}
\begin{multline*} 
- \int_Q \xi_t \int_0^{T_k(u)}\psi(r)dr\,dxdt- \into \xi(0, x) \int_0^{T_k(u_0)}\psi(r) \,drdx \\
+\int_Q \adixt {\nabla T_k(u)}\nabla \left(\xi(\psi(T_k(u)))_h \right)\,dxdt
\\
 \leq \int_Q  \xi \, (\psi(T_k(u)))_h\, 
\,d\mu+\int_Q   \xi (\psi(T_k(u)))_h\,d\la_k  + o(1)_h.
\end{multline*}
In the energy term we can let $h$ go to zero since the averages are continuous in $\psob$. We also   use   that $|(\psi(T_k(u)))_h|\leq   \|\psi\|_\infty$ cap-quasi everywhere in the term with $\la_k$, and we obtain \eqref{Sh}. The proof of \eqref{S-h} is identical using now \cite[Lemma 2.3]{BlPo} for the time derivative. 
\end{proof}

We immediately deduce the following

\begin{corollary}\label{corest} If $u$ is a renormalized solution of \eqref{renpb}, then
\begin{multline*}
\into \int_0^{u(\tau)} \psi(r) \, drdx +\int_0^\tau\into \adixt{\nabla u}{\nabla u}\psi'(u)\,dxdt
\\
\leq \into \int_0^{u_0}  \psi(r)\,drdx + \|\psi\|_\infty \|\mu\|_{\pmisure} \quad \hbox{for a.e. $\tau\in (0,T)$},
\end{multline*}
for every nondecreasing $\psi\in W^{1,\infty}(\RR)$ such that $\psi(0)=0$ and $\psi' $ has compact support. 
\end{corollary}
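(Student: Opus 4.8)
The plan is to derive Corollary~\ref{corest} from Lemma~\ref{testu}, more precisely from the inequality \eqref{Sh}, by choosing a suitable test function $\xi$ that localizes in time and then passing to the limit in the Steklov parameter $h$ and in the localization parameters. First I would fix $k > 0$ large enough so that $\psi$ is constant outside $(-k, k)$ (recall $\psi'$ has compact support), so that $\psi(T_k(u)) = \psi(u)$ and $\psi'(T_k(u))\nabla T_k(u) = \psi'(u)\nabla u$ almost everywhere; hence $\nabla(\psi(T_k(u))\xi) = \xi \psi'(u)\nabla u + \psi(u)\nabla\xi$, and since $\psi(0) = 0$ one also gets $\Psi(T_k(u)) = \Psi(u)$ with $\Psi(r) = \int_0^r \psi(s)\,ds$. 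In \eqref{Sh} I would take $\xi(t,x) = \theta_\eta(t)$, a function of $t$ alone with $\theta_\eta$ nonincreasing, $\theta_\eta \equiv 1$ on $[0, \tau]$, $\theta_\eta \equiv 0$ on $[\tau + \eta, T)$, affine in between; this is admissible in Lemma~\ref{testu} since $\psi(0)\xi = 0$ trivially (indeed $\psi(0) = 0$).

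With this choice the energy term becomes $\int_Q \theta_\eta\, \adixt{\nabla u}\nabla u\,\psi'(u)\,dxdt$, which by the coercivity assumption \eqref{coercp} is nonnegative and, as $\eta \to 0$, converges to $\int_0^\tau\!\int_\Omega \adixt{\nabla u}\nabla u\,\psi'(u)\,dxdt$ for a.e.\ $\tau$ (using that $\theta_\eta \uparrow \chi_{(0,\tau)}$ and monotone convergence, the integrand being nonnegative). The term $-\int_Q \Psi(u)\,(\theta_\eta)_t\,dxdt = \frac1\eta\int_\tau^{\tau+\eta}\!\int_\Omega \Psi(u)\,dxdt$ converges to $\int_\Omega \Psi(u(\tau))\,dx$ for a.e.\ $\tau$, by the Lebesgue differentiation theorem, once one knows $t \mapsto \int_\Omega \Psi(u(t))\,dx$ is integrable — this follows since $|\Psi(u)| \le \|\psi\|_\infty |u|$ and $u \in L^1(Q)$ (indeed $u \in C([0,T];L^1(\Omega))$ by the regularity of renormalized solutions). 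The initial datum term is simply $-\int_\Omega \Psi(T_k(u_0))\,dx = -\int_\Omega \Psi(u_0)\,dx$ for $k$ large (here one should be slightly careful if $u_0$ is merely $L^1$: since $\psi'$ has compact support, $\Psi(T_k(u_0)) \to \Psi(u_0)$ in $L^1(\Omega)$ as $k \to \infty$, and actually they agree once $k$ exceeds the support of $\psi'$ only if $u_0$ is bounded — in general one keeps $T_k(u_0)$ and notes $\Psi(T_k(u_0)) \to \Psi(u_0)$, so the final estimate holds after letting $k \to \infty$ at the end).

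On the right-hand side of \eqref{Sh} the term $\|\psi\|_\infty\|\xi\|_\infty\|\la_k\| = \|\psi\|_\infty\|\la_k\|_{\pmisure}$ is harmless since $\|\la_k\| \to 0$; but more simply, for the stated estimate one just bounds it, together with $\int_Q (\psi(T_k(u)))_h\,\theta_\eta\,d\mu$, by $\|\psi\|_\infty\|\mu\|_{\pmisure} + \|\psi\|_\infty\|\la_k\|_{\pmisure}$, using $|(\psi(T_k(u)))_h| \le \|\psi\|_\infty$ cap-quasi everywhere (noted after Lemma~\ref{cap-stek}) and hence $|\mu|$-a.e.\ since $\mu$ is diffuse, together with $0 \le \theta_\eta \le 1$. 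Letting $h \to 0$ first (the $o(1)_h$ term vanishes, and the energy term is continuous in $\psob$ so no issue there), then $\eta \to 0$, and finally $k \to \infty$ to replace $\Psi(T_k(u_0))$ by $\Psi(u_0)$ and to absorb $\|\la_k\| \to 0$, yields exactly the asserted inequality.

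The main obstacle I anticipate is the justification of passing to the limit $\eta \to 0$ in the time-derivative term, i.e.\ proving the ``almost every $\tau$'' statement cleanly: one needs that $\tau \mapsto \int_\Omega \Psi(u(\tau))\,dx$ is well defined pointwise (not just a.e.) and that the Steklov-type difference quotient converges to it a.e. This is where the $C([0,T];L^1(\Omega))$-regularity of the renormalized solution — or, failing that, the identification of a good representative of $t \mapsto u(t)$ — must be invoked; it is standard but it is the one place where some care about the meaning of ``$u(\tau)$'' is genuinely needed. A secondary, purely bookkeeping, difficulty is the order of limits: one must take $h \to 0$ strictly before $\eta \to 0$ (so that the cap-quasi-everywhere bound on the Steklov average is available when estimating the $\mu$-integral), and $k \to \infty$ last; done in this order everything goes through by dominated/monotone convergence.
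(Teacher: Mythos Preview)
Your approach is essentially the paper's own proof, only spelled out in more detail: the paper just says to use \eqref{Sh} with $\xi=\xi(t)$, $0\le\xi\le1$, bound the $\mu$-term trivially, let $h\to0$, then $k\to\infty$ using \eqref{limnu}, and finally choose $\xi$ as a smooth approximation of $\chi_{(0,\tau)}$.

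One small slip worth flagging: the claim ``since $\psi(0)=0$ one also gets $\Psi(T_k(u))=\Psi(u)$'' is false --- even once $k$ exceeds the support of $\psi'$, the primitive $\Psi$ keeps growing linearly (with slope $\psi(\pm\infty)$), so $\Psi(T_k(u))\neq\Psi(u)$ on $\{|u|>k\}$. You correctly spot this issue for $u_0$ later, and the fix is exactly the one you give there: keep $\Psi(T_k(u))$ throughout, do the $\eta\to0$ limit to land on $\int_\Omega\Psi(T_k(u(\tau)))\,dx$, and only then let $k\to\infty$ using $|\Psi(T_k(u))|\le\|\psi\|_\infty|u|$ and $u\in L^\infty(0,T;L^1(\Omega))$ for dominated convergence. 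With that correction your argument is complete and matches the paper.
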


\begin{proof}  
We use \eqref{Sh}, where we take $\xi=\xi(t)$, with $0\leq \xi\leq 1$.   First we estimate in an obvious way the term with $\mu$.  Then we let $h\to 0$. Finally, since $\psi'$ has compact support, it is also possible to let $k\to \infty$,  using \eqref{limnu}. With  a standard choice of $\xi$ (e.g.\@ a smooth approximation of $\chi_{(0,\tau)}$) we conclude.
\end{proof}

\begin{remark} 
\rm
When $\mu\in L^1(Q)$, one can pass to the limit in \eqref{Sh} and \eqref{S-h} when $h\to 0$ using the continuity of Steklov approximations in $L^1$ and the fact that $\psi$ is bounded. Choosing $\psi'$ with compact support also allows to let $k\to \infty$ and one obtains  
\begin{multline*}
- \int_Q \Psi(u)\xi_t \, dxdt- \into \Psi(u_0)\xi(0, x)\,dx
+ \int_Q \adixt{\nabla u}\nabla (\psi(u)\xi) \, dxdt\\ = \int_Q \psi(u) \xi \mu\,dxdt .
\end{multline*}
Of course one can replace here $\psi$ with $-\psi$ and $\xi$ with $-\xi$, hence the equality will be true for any   Lipschitz function $\psi$ and any $\xi$. We then recover the usual renormalized formulation in case of $L^1$-data. The asymptotic estimate for the energy will  also be proved later  (see \eqref{asy-bm} in Proposition~\ref{stimfett} below), and this proves that this formulation is equivalent to the one given in   \cite{BlMu} for $L^1$-data. 
\end{remark}

We are  now able to show that any renormalized solution satisfies the usual estimates and is, in particular, a distributional solution. To this purpose, we only need to precise  what we mean by $\nabla u$ when $u$ need not belong to any Sobolev space. We follow the definition of generalized gradient introduced in \cite{B6} for functions $u$ whose truncations belong to a  Sobolev space:

\begin{definition}
Let $u:Q\to \mathbb R$ be a measurable function which is almost everywhere finite and such that $T_k(u)\in \psob $ for every $k>0$. Then (see \cite[Lemma~2.1]{B6}) there exists a unique vector-valued function $U$ such that
$$
U= \nabla T_k(u)\chi_{\{|u|<k\}}\quad \hbox{a.e. in  $Q$},\quad \forall  k>0.
$$ 
This function $U$ will be called the gradient of $u$, hereafter denoted by $\nabla u$. When $u\in L^1(0,T;W^{1,1}_0(\Omega))$, it coincides with the usual distributional gradient.
\end{definition}

 We recall the definition of a distributional solution of \eqref{renpb}. Notice that such a  definition  makes sense  for any measure $\mu$, not necessarily diffuse, even if  in our context we are always dealing  with diffuse measures.
 
\begin{definition}
A function $u\in L^1(Q)$ is a distributional solution of problem \eqref{renpb} if  $T_k(u)\in \psob$ for every $k>0$, if    $| \nabla u |^{p-1}  \in L^{1}(Q)$,  and if
\begin{equation}\label{weak}
-\int_Q u \varphi_t \, dxdt +\int_Q a(t,x,\nabla u)  \cdot\nabla \varphi \, dxdt =\int_Q \varphi\ d\mu + \into u_0 \varphi( 0,x) \, dx ,
\end{equation}
for any $\varphi\in C^{\infty}_{c}([0,T)\times\Omega)$. 
\end{definition}

We then have

\begin{proposition}\label{ren-dis}
If $u$ is a renormalized solution of \eqref{renpb}, then for every $k>0$ and $\tau\leq T$,
\be\label{baseR}
\into \Theta_k (u)(\tau) \, dx + \int_{0}^{\tau} \!\! \into |\nabla \tku |^p \, dx dt \leq C k\left( \|\mu\|_{\mathcal{M}(Q)}+ \|u_0\|_{\elle1}\right),
\ee
where $\Theta_k(s)= \int_0^s T_k(t) \,dt$.
Therefore,  
\[
u\in L^\infty(0,T;\elle1)
\]
and
\[
| \nabla u |^{p-1} \in L^{r}(Q) \quad \text{and} \quad a(t,x,\nabla u) \in L^{r}(Q)
\]
for any $r<\frac{N+p'}{N+1}$. Moreover, $u$ is a distributional solution.
\end{proposition}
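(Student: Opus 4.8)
The plan is to first establish the energy estimate \eqref{baseR} by testing the renormalized equation with a (Steklov-regularized) truncation of $u$, and then to deduce the integrability of $u$ and of $\nabla u$ by the standard Marcinkiewicz-type interpolation argument, exactly as for the classical theory of parabolic equations with measure data. For \eqref{baseR}, I would apply Corollary~\ref{corest}, or rather Lemma~\ref{testu}, with the choice $\psi = T_k$ (more precisely $\psi = T_{k,\delta}$ and then $\delta \to 0$, since $\psi$ must have $\psi'$ with compact support; note $\psi(0)=0$ and $\psi$ nondecreasing, so it is admissible). With $\Psi = \Theta_k$ this gives directly
\[
\into \Theta_k(u)(\tau)\,dx + \int_0^\tau \!\! \into \adixt{\nabla u}\cdot\nabla u \, \chi_{\{|u|<k\}}\,dxdt \le \into \Theta_k(u_0)\,dx + k\|\mu\|_{\pmisure},
\]
for a.e.\@ $\tau$, and using the coercivity \eqref{coercp} together with $\Theta_k(u_0)\le k|u_0|$ and $\Theta_k(u)(\tau)\ge \frac12 [T_k(u)(\tau)]^2 \ge 0$ we obtain \eqref{baseR} (absorbing $\alpha$ into $C$). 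In particular $\frac12\|T_k(u)(\tau)\|_{\elle2}^2 \le Ck M$ for a.e.\@ $\tau$; dividing by $k$ and letting the level set $\{|u|>k\}$ shrink, or directly estimating $\into |u|(\tau) \le \into |T_k(u)|(\tau) + \dots$, one concludes $u \in L^\infty(0,T;\elle1)$ with the bound depending only on $M = \|\mu\|_{\pmisure}+\|u_0\|_{\elle1}$.

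Next, the integrability of the gradient: from \eqref{baseR} we have $\int_Q |\nabla T_k(u)|^p \le CkM$ for every $k\ge 1$, and from the $L^\infty(0,T;\elle1)$ bound we have $\mis\{|u|>k\} \le C M/k$. These are precisely the two ingredients of the classical lemma (going back to \cite{BGO} in the elliptic setting, and its parabolic analogues) giving $|\nabla u| \in M^{q}(Q)$ for the Marcinkiewicz exponent $q = \frac{N+p'}{N+1}\,(p-1)$ after suitable normalization, hence $|\nabla u|^{p-1} \in L^r(Q)$ for every $r < \frac{N+p'}{N+1}$; the growth condition \eqref{cont} together with $b \in L^{p'}(Q) \subset L^r(Q)$ then yields $\adixt{\nabla u} \in L^r(Q)$ for the same range of $r$. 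I would carry this out by the standard splitting: for $\lambda>0$, estimate $\mis\{|\nabla u|>\lambda\} \le \mis\{|u|>k\} + \mis\{|\nabla T_k(u)|>\lambda\} \le CM/k + Ck M/\lambda^p$, optimize in $k$ by taking $k \sim \lambda^{p/2}$ (or, more carefully, the exponent dictated by the two competing powers), and integrate $\lambda^{r-1}$ against this distribution function over $\lambda \in (1,\infty)$, checking the threshold $r < \frac{N+p'}{N+1}$.

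Finally, to see that $u$ is a distributional solution, I would pass to the limit $k \to \infty$ in the renormalized identity \eqref{req} (which holds for $\varphi \in C_c^\infty([0,T)\times\Omega)$): the term $\int_Q T_k(u)\varphi_t \to \int_Q u\varphi_t$ by dominated convergence using $u \in L^1(Q)$; the term $\int_Q \adixt{\nabla T_k(u)}\cdot\nabla\varphi \to \int_Q \adixt{\nabla u}\cdot\nabla\varphi$ because $\adixt{\nabla T_k(u)} = \adixt{\nabla u}\,\chi_{\{|u|<k\}}$ off a null set (by the definition of generalized gradient and \eqref{cont} applied on $\{|u|\ge k\}$, $\adixt{\nabla T_k(u)}$ is supported where $\nabla T_k(u)=0$, hence equals $a(t,x,0)$, bounded by $\beta b$ — one must be slightly careful here, but since $\adixt{0}$ contributes $\int_{\{|u|\ge k\}}\adixt{0}\cdot\nabla\varphi \to 0$ by dominated convergence, the limit is $\int_Q \adixt{\nabla u}\cdot\nabla\varphi$, which is finite by the integrability just proved); the term $\int_Q T_k(u_0)\varphi(0,\cdot) \to \into u_0\varphi(0,\cdot)$ by $u_0\in\elle1$; and crucially $\int_Q \varphi\,d\la_k \to 0$ by \eqref{limnu}. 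This gives \eqref{weak}. The step I expect to require the most care is the gradient integrability computation — getting the sharp exponent $\frac{N+p'}{N+1}$ right from the interplay of the two estimates — together with the verification that the energy-term passage to the limit in the distributional identity is legitimate (i.e.\@ controlling $\adixt{\nabla T_k(u)}$ on the set $\{|u|\ge k\}$); both are routine but delicate, and I would lean on the cited results \cite{BGO, B6} and the estimate \eqref{baseR} to keep the argument short.
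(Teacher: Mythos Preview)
Your proposal is correct and follows essentially the same route as the paper: apply Corollary~\ref{corest} with $\psi=T_k$ (note that $T_k'$ already has compact support, so no regularization is needed) together with the coercivity \eqref{coercp} to get \eqref{baseR}, invoke the classical results \cite{bdgo,BG} for the gradient regularity, and let $k\to\infty$ in \eqref{req} using \eqref{limnu} to obtain the distributional formulation. One small simplification: since \eqref{coercp} forces $a(t,x,0)=0$, on $\{|u|\ge k\}$ we have $\adixt{\nabla T_k(u)}=0$ exactly, so the passage to the limit in the energy term is cleaner than you fear.
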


\begin{proof}
Estimate \eqref{baseR} immediately follows from Corollary~\ref{corest} taking $\psi= T_k$ and using assumption \eqref{coercp}. When $k=1$ one deduces that $ u\in L^\infty(0,T;\elle1)$. Following the results in  \cite{bdgo,BG}, one also deduces the regularity for $|\nabla u|^{p-1}$, hence for $a(t,x,\nabla u)$ in view of \eqref{cont}. In particular, we have $a(t,x,\nabla u)\in L^1(Q)$, and letting $k\to +\infty$ in \eqref{req} we obtain \eqref{weak}, i.e. $u$ is a distributional solution.
\end{proof}


\begin{remark} \rm When $\mu\in \pmisure\cap L^{p'}(0,T;\duale)$ and $u_0\in \elle2$,  there exists a  solution of \eqref{renpb} in the  usual   weak sense (see \cite{l}). Such a solution is also  a renormalized solution. Indeed, in this case $u\in W$ and  satisfies the capacitary estimates of Theorem~\ref{stimcap2}. Proceeding like in the proof of Theorem~\ref{app} one obtains the renormalized formulation for $u$.
\end{remark}

We will now prove that the problem \eqref{renpb} is well-posed in the class of renormalized solutions. Thanks to the robustness of the formulation, the easiest part here is the uniqueness, which comes from the following comparison principle.

\begin{theorem}\label{contrazione}
 Let $u_1$, $u_2$ be two renormalized solutions of problem \eqref{renpb} with data $(u_{01}, \mu_1)$ and $(u_{02}, \mu_2)$ respectively. Then we have
\be\label{contrabis}
\into (u_1-u_2)^+(t)\, dx \leq \| (u_{01}-u_{02})^+\|_{\elle1}+  \| (\mu_1-\mu_2)^+\|_{\pmisure}
\ee
for almost every $t\in (0,T)$. In particular, if  $u_{01}\leq u_{02}$ and $\mu_1\leq \mu_2$ (in the sense of measures), we have $u_1\leq u_2$ a.e. in $Q$. As a consequence, there exists at most one renormalized solution of problem \eqref{renpb}.
\end{theorem}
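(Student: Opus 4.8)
The plan is to prove the $L^1$-contraction estimate \eqref{contrabis} by testing the equations for $u_1$ and $u_2$ against a suitable approximation of $\operatorname{sign}^+(u_1-u_2)$, and then to derive uniqueness as an immediate corollary. Since neither $u_i$ nor $T_k(u_i)$ need admit a cap-quasicontinuous representative, the test function cannot be chosen directly; instead I would work with the Steklov-averaged inequalities \eqref{Sh} and \eqref{S-h} from Lemma~\ref{testu}, which are tailored exactly for this purpose. Concretely, fix $k>0$ large, a small parameter $\delta>0$, and let $\psi = \psi_\delta$ be a nondecreasing Lipschitz approximation of $\operatorname{sign}^+$ (e.g.\@ $\psi_\delta(s)=\min\{s^+/\delta,1\}$), so that $\psi_\delta'$ has compact support. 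The idea is to use \eqref{Sh} for $u_1$ with $\psi$ evaluated at $T_k(u_1)-T_k(u_2)_h$ (and \eqref{S-h} for $u_2$ symmetrically), or more transparently to mimic the computation of Corollary~\ref{corest} but now for the difference: test the $u_1$-equation with $\psi_\delta\big((T_k(u_1))_h - T_k(u_2)\big)\xi$ and the $u_2$-equation with the same factor, subtract, and pass to the limit $h\to 0$ using Lemma~\ref{cap-stek} (which gives q.e.\@ convergence of the Steklov averages, hence $\mu_i$-a.e.\@ and $\la_{k,i}$-a.e.\@ convergence since those measures are diffuse).

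The key steps, in order, are: \textbf{(1)} Write \eqref{req2} for $u_1$ and $u_2$ (with their respective $\la_k$'s), using that $W\cap L^\infty(Q)$ test functions are admissible by Proposition~\ref{testW}; \textbf{(2)} choose the test function $v = \xi\,\psi_\delta\big((T_k(u_1)-T_k(u_2))_h\big)$, which lies in $W\cap L^\infty$ and vanishes at $t=T$, subtract the two identities, and handle the parabolic term via the integration-by-parts lemma of \cite{BlPo} (Lemma~2.1/2.3) exactly as in Lemma~\ref{testu}, producing in the limit $h\to 0$ a term $\int_\Omega \Psi_\delta\big((T_k(u_1)-T_k(u_2))(\tau)\big)\,dx - \int_\Omega \Psi_\delta\big(T_k(u_{01})-T_k(u_{02})\big)\,dx$ where $\Psi_\delta(r)=\int_0^r\psi_\delta$; \textbf{(3)} observe the diffusion term $\int_0^\tau\!\!\int_\Omega \big(a(t,x,\nabla T_k(u_1))-a(t,x,\nabla T_k(u_2))\big)\cdot\nabla(T_k(u_1)-T_k(u_2))\,\psi_\delta'(\cdots)\,dxdt \ge 0$ by the monotonicity assumption \eqref{monot}, so it may be dropped; \textbf{(4)} on the right-hand side estimate $\int_Q \xi\,\psi_\delta(\cdots)\,d(\mu_1-\mu_2) \le \|(\mu_1-\mu_2)^+\|_{\cM(Q)}$ using $0\le\psi_\delta\le 1$, and the $\la_k$-terms by $\|\la_{k,1}\|+\|\la_{k,2}\|$; \textbf{(5)} let $k\to\infty$ so that $\|\la_{k,i}\|\to 0$ by \eqref{limnu}, noting that on the set where $u_1\ne u_2$ the truncations eventually separate, so $\psi_\delta(T_k(u_1)-T_k(u_2))\to\psi_\delta(u_1-u_2)$; \textbf{(6)} let $\delta\to 0$ so that $\Psi_\delta(r)\to r^+$ and $\psi_\delta\to\operatorname{sign}^+$, obtaining with the usual choice of $\xi$ approximating $\chi_{(0,\tau)}$ the estimate \eqref{contrabis}; \textbf{(7)} for uniqueness apply \eqref{contrabis} with $u_{01}=u_{02}$, $\mu_1=\mu_2$ to both orderings, concluding $u_1=u_2$ a.e.

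The main obstacle is Step (2)--(5): making rigorous the passage to the limit in the time-derivative term and ensuring the Steklov-average manipulations from \cite{BlPo} apply to the \emph{difference} $T_k(u_1)-T_k(u_2)$ rather than a single solution. The delicate point is that $(T_k(u_i))_t \in W'\cap\cM(Q)$ only, so one must carefully split $(T_k(u_i))_t = \dive(a(t,x,\nabla T_k(u_i))) + (\mu_i+\la_{k,i})$ and treat the $L^{p'}$-divergence part and the measure part separately when integrating against the Steklov average; this is precisely where the diffuseness of $\mu_i$ and $\la_{k,i}$ is essential, so that the q.e.\@ convergence furnished by Lemma~\ref{cap-stek} upgrades to convergence $\mu_i$- and $\la_{k,i}$-a.e., allowing dominated convergence on the right-hand side. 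A secondary technical care is the simultaneous interplay of the three limits $h\to 0$, $k\to\infty$, $\delta\to 0$: one should take them in this order, keeping $k$ fixed while $h\to 0$ (so Lemma~\ref{testu} applies verbatim), then $k\to\infty$ with $\delta$ fixed (legitimate since $\psi_\delta'$ has compact support), and finally $\delta\to 0$.
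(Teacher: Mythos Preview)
Your proposal is essentially the paper's own proof. The paper also subtracts the two identities from Proposition~\ref{testW}, tests with the Steklov average $w_h\xi$ of $\frac{1}{\varepsilon}T_\varepsilon(T_k(u_1)-T_k(u_2))^+$ (your $\psi_\delta$), handles the time derivative via \cite[Lemma~2.1]{BlPo}, drops the second-order term by monotonicity, and then lets $h\to0$, $\varepsilon\to0$, $k\to\infty$ (you take $k\to\infty$ before $\delta\to0$, which also works since $\psi_\delta'$ has compact support).

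One small correction to your Step~(4): do not try to pass to the limit $h\to0$ in the measure terms and then invoke Lemma~\ref{cap-stek} for q.e.\ convergence---the function $\psi_\delta(T_k(u_1)-T_k(u_2))$ is not known to lie in $S$ (the time derivative of $T_k(u_i)$ is only a measure, not in $L^{p'}(0,T;W^{-1,p'})+L^1$), so Lemma~\ref{cap-stek} does not apply directly. Instead, as the paper does, bound the measure terms \emph{before} letting $h\to0$: since $0\le w_h\le1$ a.e.\ and $w_h\in W$, Corollary~\ref{tech4} gives $0\le w_h\le1$ cap-q.e., hence
\[
\int_Q w_h\xi\,d(\mu_1-\mu_2)\le \|\xi\|_\infty\|(\mu_1-\mu_2)^+\|_{\cM(Q)},\qquad
\Big|\int_Q w_h\xi\,d\la_{k,i}\Big|\le\|\xi\|_\infty\|\la_{k,i}\|_{\cM(Q)},
\]
and these bounds survive the subsequent limits unchanged. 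With this adjustment your outline matches the paper's proof.
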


\begin{proof}
Let   $\la_{k,1}$, $\la_{k,2}$ be the measures given by Definition~\ref{rendef} corresponding to $u_1$, $u_2$. Proposition~\ref{testW} implies
\begin{multline*}
-\int_Q (T_k(u_1)-T_k(u_2))v_t\,dxdt +\int_Q (\adixt {\nabla T_k(u_1)}-\adixt {\nabla T_k(u_2)})\nabla v\,dxdt
\\
= \int_Q  v\,d(\mu_1-\mu_2)+\int_Q  v\,d\la_{k,1}-\int_Q  v\,d\la_{k,2} +\into (T_k(u_{01})-T_k(u_{02}))v(0, x)\,dx 
\end{multline*}
for every $v\in W\cap\parelle\infty$ such that $v(T)=0$. Consider the function 
$$
w_h(t,x)=\frac1h\int_t^{t+h} \frac1\vep T_\vep\left( T_k(u_1)-T_k(u_2)\right)^+(s, x)\,ds,
$$
Given $\xi\in C_c^\infty([0,T))$, $\xi\geq 0$, take $v=w_h\xi$ as test function. Observe that both $w_h$ and $(w_h)_t$ belong to $L^p(0,T;V)\cap\parelle\infty$  for \(h > 0\) sufficiently small, hence $w_h\in W\cap\parelle\infty$.  Moreover we have
$$
w_h\to \frac1\vep T_\vep\left( T_k(u_1)-T_k(u_2)\right)^+\qquad \hbox{strongly in $\psob$.}
$$
Using that $0\leq w_h\leq 1$ almost everywhere, hence $0\leq  w_h\leq 1$ cap-quasi everywhere, we have
\begin{multline}\label{ineh}
-\int_Q \left[ (T_k(u_1)-T_k(u_2))-(T_k(u_{01})-T_k(u_{02}))\right] (w_h\xi)_t\,dxdt 
\\
+\int_Q (\adixt {\nabla T_k(u_1)}-\adixt {\nabla T_k(u_2)})\nabla w_h\xi\,dxdt
\\
\leq \|\xi\|_\infty \left( \| (\mu_1-\mu_2)^+\|_{\pmisure} + \|\la_{k,1}\|_{\pmisure} + \|\la_{k,2} \|_{\pmisure}\right).
\end{multline}
Using the monotonicity of $T_\vep(s)$ we have (see \cite[Lemma 2.1]{BlPo})
\begin{multline*}
\liminf\limits_{h\to 0} \left\{ -\int_Q \left[ (T_k(u_1)-T_k(u_2))-(T_k(u_{01})-T_k(u_{02}))\right] (w_h\,\xi)_t\,dxdt \right\}
\\
\geq -\int_Q \hat\Theta_\vep(T_k(u_1)-T_k(u_2))\xi_t\,dxdt- 
\into \hat\Theta_\vep(T_k(u_{01})-T_k(u_{02}))\xi(0)\,dx
\end{multline*}
where $\hat\Theta_\vep(s)=\int_0^s \frac1\vep T_\vep(r)^+dr$.  Therefore, letting $h\to0$ in \eqref{ineh} we obtain
\begin{multline*}
-\int_Q \hat\Theta_\vep(T_k(u_1)-T_k(u_2))\xi_t\,dxdt\\
+
\frac1\vep\int_Q (\adixt {\nabla T_k(u_1)}-\adixt {\nabla T_k(u_2)})\nabla T_\vep(T_k(u_1)-T_k(u_2))^+\xi\,dxdt
\\
\leq \into \hat\Theta_\vep(T_k(u_{01})-T_k(u_{02}))\xi(0 )\,dx\\
\m
+ \|\xi\|_\infty \left( \| (\mu_1-\mu_2)^+\|_{\pmisure} + \|\la_{k,1}\|_{\pmisure} + \|\la_{k,2} \|_{\pmisure}\right).
\end{multline*}
Using \eqref{monot} and letting $\vep\to 0$ we deduce
\begin{multline*}
-\int_Q (T_k(u_1)-T_k(u_2))^+\xi_t\,dxdt
\leq \into (T_k(u_{01})-T_k(u_{02}))^+\xi(0)\,dx\\
+ \|\xi\|_\infty \left( \| (\mu_1-\mu_2)^+\|_{\pmisure} + \|\la_{k,1}\|_{\pmisure} + \|\la_{k,2} \|_{\pmisure}\right)
\end{multline*}
and letting 
$k\to \infty$ we obtain, thanks to \eqref{limnu},
$$
-\int_Q (u_1-u_2)^+\xi_t\,dxdt\leq \|\xi\|_\infty\left( \|(u_{01}-u_{02})^+\|_{\elle1}+  \| (\mu_1-\mu_2)^+\|_{\pmisure} \right)
$$
for every nonnegative $\xi\in C_c^\infty[0,T)$.  Of course the same inequality holds for any $\xi\in W^{1,\infty}(0,T)$ with compact support in \([0, T)\). Take then $\xi(t) =1- \frac1\vep\,T_\vep(t-\tau)^+$, where $\tau\in (0,T)$;   since $u_1$, $u_2\in \limitate 1$, by letting $\vep \to 0$ we have
$$
-\int_Q (u_1-u_2)^+\xi_t\,dxdt= \frac1\vep\int_{\tau}^{\tau+\vep}\into (u_1-u_2)^+\,dxdt\to \into (u_1-u_2)^+(\tau)\,dx
$$
for almost every $\tau\in (0,T)$. Using in the right hand side that $\|\xi\|_\infty\leq 1$ we get  \eqref{contrabis}.
\end{proof}

\begin{remark}\rm
The above result continue to hold if we replace  condition \eqref{monot} with the more general
$$
[a(t,x,\xi) - a(t,x,\eta)] \cdot (\xi - \eta) \geq  0\qquad \forall \xi,\eta\in \rn. 
$$
\end{remark}

\begin{remark}\label{comp-anom}\rm 
If $\mu\in \pmisure\cap L^{p'}(0,T;\duale)$, then $\mu^+$ is a diffuse measure (since $\mu$ is diffuse) but we can not infer that $\mu^+$ belongs to $L^{p'}(0,T;\duale)$. However, if $u$ is a standard weak solution corresponding to $\mu$, and $v$ is a renormalized solution corresponding to $\mu^+$, one can deduce that $u\leq v$. It is enough to proceed as in the above proof  using  
\[
(t, x) \longmapsto \xi(t)\,\frac1h\int_t^{t+h} \frac1\vep T_\vep\left( u-T_k(v)\right)^+(s, x)\,ds
\]
as test function.
\end{remark}

We immediately deduce the $L^1$-contraction estimate.

\begin{corollary}\label{compari}
 Let $u_1$, $u_2$ be two renormalized solutions of problem \eqref{renpb} with data $(u_{01}, \mu_1)$ and $(u_{02}, \mu_2)$ respectively. Then we have
\be\label{contra}
\into |u_1-u_2|(t)\, dx \leq \|u_{01}-u_{02}\|_{\elle1}+  \| \mu_1-\mu_2\|_{\pmisure}
\ee
for almost every $t\in (0,T)$. 
\end{corollary}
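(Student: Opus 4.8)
The plan is to obtain the $L^1$-contraction estimate \eqref{contra} directly from the one-sided comparison estimate \eqref{contrabis} of Theorem~\ref{contrazione}, simply by applying the latter twice. The point is that \eqref{contrabis} is not symmetric in the two pairs of data, so besides applying it to $u_1$, $u_2$ with data $(u_{01},\mu_1)$, $(u_{02},\mu_2)$, I would also apply it once more after interchanging the roles of $u_1$ and $u_2$.

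Concretely, Theorem~\ref{contrazione} first gives, for a.e.\ $t\in(0,T)$,
\[
\into (u_1-u_2)^+(t)\,dx \leq \|(u_{01}-u_{02})^+\|_{\elle1}+\|(\mu_1-\mu_2)^+\|_{\pmisure},
\]
and, applied with $u_1$ and $u_2$ swapped, also
\[
\into (u_2-u_1)^+(t)\,dx \leq \|(u_{02}-u_{01})^+\|_{\elle1}+\|(\mu_2-\mu_1)^+\|_{\pmisure}
\]
for a.e.\ $t\in(0,T)$. Summing these two inequalities on the common (full-measure) set of times where both hold, and using the pointwise identity $|r|=r^{+}+(-r)^{+}$ (so that $|u_1-u_2|=(u_1-u_2)^{+}+(u_2-u_1)^{+}$, and likewise for the initial data), together with the Jordan-decomposition identity $\|\nu\|_{\pmisure}=|\nu|(Q)=\nu^{+}(Q)+\nu^{-}(Q)=\|\nu^{+}\|_{\pmisure}+\|\nu^{-}\|_{\pmisure}$ applied to $\nu=\mu_1-\mu_2$, one arrives at \eqref{contra}.

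There is no real obstacle here: the only point requiring care is that each application of Theorem~\ref{contrazione} holds only up to a $t$-null set, so one works on the intersection of the two complements, which still has full measure in $(0,T)$. Everything else is a direct rewriting via the decomposition of the absolute value into positive and negative parts and the additivity of the total variation over the Jordan decomposition.
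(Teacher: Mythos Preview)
Your proof is correct and is exactly the argument the paper has in mind: the corollary is stated there as an immediate consequence of Theorem~\ref{contrazione}, and the intended deduction is precisely to apply \eqref{contrabis} once and then with the roles of $(u_1,u_{01},\mu_1)$ and $(u_2,u_{02},\mu_2)$ interchanged, then add.
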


\vskip1em
As usual for nonlinear equations with measure data, we  will prove the existence of solutions  through  approximation of the data $\mu$, $u_0$  with smooth functions. We will need the following proposition which collects some known results in the literature.

\begin{proposition}\label{pro}
Let $(u_n)$ be a sequence of solutions of problem 
\begin{equation}\label{senzah}
\begin{cases}
    (u_n)_{t}-\dive(a(t,x,\nabla  u_n)) =\mu_n & \text{in}\ Q,\\
    u_n=u_{0n}  & \text{on}\ \{0\} \times \Omega,\\
 u_n=0 &\text{on}\ (0,T)\times\partial\Omega,
  \end{cases}
\end{equation} 
  where  $(u_{0n})$ strongly converges to $u_0$ in $L^1(Q)$ and $(\mu_n)$ is a bounded sequence in \(L^1(\Omega)\). Then,
\begin{equation}\label{pro1}
\|u_n\|_{L^\infty (0,T; L^1 (\Omega))}\leq C,
\end{equation}
and
\begin{equation}\label{pro2}
\int_Q|\nabla T_k (u_n)|^p\ dxdt\leq Ck \qquad \forall k>0.
\end{equation}
Moreover, there exists a measurable function $u$ such that $T_k (u)\in\psob$ for any $k>0$, $u\in L^\infty (0,T; L^1 (\Omega))$, and, up to  a subsequence,  we have
\be\label{conver}
\begin{split}
& u_n \rightarrow u \qquad \hbox{a.e. in $Q$    and strongly in $L^1 (Q)$,}\\
&T_k (u_n)\rightharpoonup T_k (u)\qquad \hbox{weakly in $ \psob$  and a.e. in $Q$,}\\
&\nabla u_n\rightarrow \nabla u \qquad \hbox{a.e. in $Q$.}\\
&|\nabla u_n|^{p-2}\nabla u_n\rightarrow |\nabla u|^{p-2}\nabla u\qquad  \hbox{ in $\luq$.}
\end{split}
\ee
\end{proposition}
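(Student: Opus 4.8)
All the assertions are by now classical; the plan is to recall the main steps and to point to \cite{bdgo,BG,B6} (and, for the elliptic counterpart of the nonlinear gradient compactness, to \cite{DMOP}) for the technical details. First I would derive the a~priori bounds by using $T_k(u_n)$ as a test function in \eqref{senzah}: rigorously one works with the Steklov time-averages of $u_n$, exactly as in the proof of Theorem~\ref{stimcap}, and then lets the averaging parameter tend to $0$. Combining \eqref{parts} with $\psi=\Theta_k$, the coercivity \eqref{coercp}, and the bound on $\|\mu_n\|_{\luq}$, one obtains for every $\tau\in[0,T]$
\[
\intO \Theta_k(u_n)(\tau)\,dx + \alpha\int_0^\tau\!\!\intO |\nabla T_k(u_n)|^p\,dxdt \le Ck\big(\|\mu_n\|_{\luq}+\|u_{0n}\|_{\elle1}\big),
\]
which gives \eqref{pro2}; taking $k=1$ and using $\Theta_1(s)\ge|s|-\tfrac12$ yields \eqref{pro1}. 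A Gagliardo--Nirenberg interpolation of Boccardo--Gallou\"et type (see \cite{BG,bdgo}) then upgrades \eqref{pro1}--\eqref{pro2} to boundedness of $u_n$ in some $L^s(Q)$, $s>1$, and of $|\nabla u_n|^{p-1}$ in $L^r(Q)$ for every $r<\frac{N+p'}{N+1}$; in particular both families are equi-integrable on $Q$, and by \eqref{cont} so is $(\adixt{\nabla u_n})$.

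Next I would obtain the pointwise convergence of $u_n$. The truncations $T_k(u_n)$ are bounded in $\psob$ (and, being bounded by $k$, in $L^p(0,T;V)$), while from the equation $(T_k(u_n))_t$ is bounded in $L^1(Q)+L^{p'}(0,T;W^{-1,p'}(\Omega))$; a compactness lemma of Simon type (as used in \cite{bdgo}) then makes $T_k(u_n)$ relatively compact in $L^1(Q)$. A diagonal argument over $k$, together with the $\pli$-bound, produces a measurable function $u$ with $T_k(u)\in\psob$, $u\in\pli$, and, up to a subsequence, $u_n\to u$ a.e.\ in $Q$; the equi-integrability of $(u_n)$ promotes this to strong convergence in $L^1(Q)$, and the uniform $\psob$-bound gives $T_k(u_n)\rightharpoonup T_k(u)$ weakly in $\psob$ (and a.e.).

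The main obstacle is the a.e.\ convergence of the gradients, which I would handle by the usual nonlinear device: test the equation for $u_n$ with $T_\delta\!\big(T_k(u_n)-(T_k(u))_h\big)\,\xi$, where $(T_k(u))_h$ is a Steklov average of the limit (an admissible test function, close to $T_k(u)$ in $\psob$) and $\xi$ is a cut-off, and then pass to the limit in the order $n\to\infty$, $h\to0$, $\delta\to0$ to prove
\[
\lim_{n\to\infty}\int_Q \big(\adixt{\nabla T_k(u_n)}-\adixt{\nabla T_k(u)}\big)\cdot\nabla\big(T_k(u_n)-T_k(u)\big)\,dxdt=0 .
\]
Since by \eqref{monot} the integrand is nonnegative, it converges to $0$ in $L^1(Q)$, and a pointwise Minty-type argument based again on \eqref{monot} forces $\nabla T_k(u_n)\to\nabla T_k(u)$ a.e.; as $k$ is arbitrary this yields $\nabla u_n\to\nabla u$ a.e.\ in $Q$. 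Finally, $|\nabla u_n|^{p-2}\nabla u_n\to|\nabla u|^{p-2}\nabla u$ a.e.\ and, being bounded in $L^r(Q)$ with $r>1$, is equi-integrable, so Vitali's theorem gives the last convergence in \eqref{conver}. I expect this gradient-compactness step to be the only genuinely delicate point; all the rest is a routine adaptation of well-documented arguments.
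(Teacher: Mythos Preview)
Your sketch is correct and follows the same classical Boccardo--Gallou\"et/BDGO route that the paper invokes; the paper's own proof is in fact nothing more than a list of references to \cite{BG}, \cite{BlMu}, \cite{po} and \cite{bdgo}, so you have actually supplied more detail than the authors do. The one noteworthy difference is in the gradient-compactness step: you regularize the target $T_k(u)$ by Steklov averages, whereas the paper points to the Landes time-regularization (as in \cite{po}) precisely because it absorbs the nonzero initial data $u_{0n}$ cleanly---the Landes approximation of $T_k(u)$ is built so that its value at $t=0$ matches $T_k(u_{0n})$, which makes the integration-by-parts in time go through without a boundary remainder. Your Steklov approach still works (the cut-off $\xi$ takes care of the endpoints), but it requires a little extra care to recover the initial condition, which is exactly the ``technical point'' the paper flags.
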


\begin{proof}  
Estimates \eqref{pro1}-\eqref{pro2} are classical since the work \cite{BG}.  The a.e. convergence of $(u_n)$ can be proved by using properties of truncations and the Marcinkiewicz estimates, see e.g. \cite{BlMu} or \cite{po}. The a.e. convergence of $(\nabla u_n)$ is proved in \cite{bdgo} when $u_{0n}=0$; however, their proof extends to the case of sequences $(u_{0n})$ converging in $L^1$, the only difference being to include the initial condition  $u_{0n}$ in the so-called Landes approximation. This latter technical point can be found e.g. in \cite{po}.
\end{proof}

In order to prove the existence of a renormalized solution, we 
consider the convolution
$$
\mu_n=  \rho_n \ast \mu,
$$
where $\rho_n$ is a sequence of standard mollifiers. In this way, we can use the equi-diffusion property (see Proposition~\ref{equi}) which plays  a crucial role in our approach.
We will  state the convergence result in  a slightly more general form which can be applied in case of lower order terms as well.

\begin{proposition}\label{jn-app} 
Let $(u_{0n})$ be a sequence converging strongly to $u_0$ in $\elle1$, let $(f_{n})$ be a sequence of bounded functions converging to $f$ strongly in $\parelle1$, and let $\mu_{n}= \mu\ast \rho_n$. Let $\ujn$ be the solution of 
\begin{equation}\label{appjn}
\begin{cases}
    (\ujn)_t-\dive(\adixt {\nabla \ujn} )=f_{n}+ \mu_n & \text{in}\ Q,\\
    \ujn =u_{0n}  & \text{on}\ \{0\} \times \Omega,\\
 \ujn=0 &\text{on}\ (0,T)\times\partial\Omega.
  \end{cases}
\end{equation} 
Then there exists  a function $u$ such that  \eqref{conver} holds true up to subsequences and  
 $u$ is a renormalized solution, in the sense of Definition~\ref{rendef}, corresponding to $f+\mu$.
\end{proposition}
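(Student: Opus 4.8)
The plan is to rerun the scheme of the proof of Theorem~\ref{app}, now dragging along the extra datum $f_n$, and to extract the renormalizing measures from the level sets of $u_n$. Since $(f_n+\mu_n)$ is bounded in $L^1(Q)$ — because $\|f_n\|_{L^1(Q)}$ is bounded and $\|\mu_n\|_{L^1(Q)}=\|\rho_n\ast\mu\|_{L^1(Q)}\le\|\mu\|_{\cM(Q)}$ — Proposition~\ref{pro} applies and produces a measurable function $u$ with $T_k(u)\in\psob$ for all $k>0$ and, along a subsequence, all the convergences in \eqref{conver}. As the constant in Theorem~\ref{stimcap2} depends on the initial datum only through its $\elle1$ norm, we may assume $u_{0n}\in\elle2$ (approximating the given sequence if necessary); then $u_n\in W$, and Theorem~\ref{stimcap2} gives $\capp(\{|u_n|>k\})\le C\max\{k^{-1/p},k^{-1/p'}\}$ for every $k\ge1$ and every $n$, with $C$ independent of $n$ since $\|f_n+\mu_n\|_{\cM(Q)}$ and $\|u_{0n}\|_{\elle1}$ are uniformly bounded. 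Next, exactly as in the derivation of \eqref{equa}--\eqref{esta} — test the equation for $u_n$ with $S_{k,\delta}(u_n)\vfi$ and with $(1-S_{k,\delta}(u_n))\,{\rm sign}(u_n)$, and then let $\delta\to0$ — the object $\nu_n^k:=(\tkun)_t-\dive(\adixt{\nabla\tkun})$ is a finite measure on $Q$ with $\|\nu_n^k\|_{\cM(Q)}\le\|f_n+\mu_n\|_{\cM(Q)}$ and, as in \eqref{est2}, $\int_Q|\nu_n^k-(f_n+\mu_n)|\le 2\int_{\{|u_n|>k\}}|f_n|+2\int_{\{|u_n|>k\}}|\mu_n|$.

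The crucial point — the only genuinely new one with respect to Theorem~\ref{app} — is to make the right-hand side of this last inequality small, uniformly in $n$, for $k$ large. For the $f_n$-part: the sequence $(f_n)$ is equi-integrable (it converges in $L^1(Q)$), while \eqref{pro1} yields $\meas{\{|u_n|>k\}}\le C/k$ uniformly in $n$, hence $\sup_n\int_{\{|u_n|>k\}}|f_n|\to0$ as $k\to\infty$. For the $\mu_n$-part: the sequence $(\mu_n)=(\rho_n\ast\mu)$ is equidiffuse by Proposition~\ref{equi}, so for every $\eps>0$ there is $\eta>0$ with $|\mu_n|(E)<\eps$ whenever $\capp(E)<\eta$; by the capacitary estimate just recalled we may fix $k$ so large that $\capp(\{|u_n|>k\})<\eta$ for all $n$, and then $\sup_n\int_{\{|u_n|>k\}}|\mu_n|\le\eps$. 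Setting $\omega(k):=\sup_n\int_Q|\nu_n^k-(f_n+\mu_n)|$, this gives $\omega(k)\to0$ as $k\to\infty$.

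It remains to pass to the limit for each fixed $k$. From $\tkun\rightharpoonup\tku$ weakly in $\psob$ and $\adixt{\nabla\tkun}\rightharpoonup\adixt{\nabla\tku}$ weakly in $L^{p'}(Q)$ (the latter from the a.e.\ convergence of $\nabla u_n$ together with the $L^{p'}$-bound coming from \eqref{cont} and \eqref{pro2}) one gets $\nu_n^k\to\nu^k:=(\tku)_t-\dive(\adixt{\nabla\tku})$ in $\mathcal{D}'(Q)$; combined with the uniform mass bound this forces $\nu_n^k\overset{*}{\rightharpoonup}\nu^k$ in $\cM(Q)$, so $\nu^k$ is a finite measure. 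Put $\la^k:=\nu^k-f-\mu$; since $f_n\to f$ in $L^1(Q)$ and $\mu_n\overset{*}{\rightharpoonup}\mu$, the measure $\la^k$ is the weak$^*$ limit of $\nu_n^k-(f_n+\mu_n)$, and lower semicontinuity of the total variation gives $\|\la^k\|_{\cM(Q)}\le\omega(k)\to0$, which is \eqref{limnu}. On the other hand, since $u_n\in W\subset C([0,T];\elle2)$, the function $\tkun$ lies in $L^p(0,T;V)\cap C([0,T];\elle2)$ with initial trace $T_k(u_{0n})$, whence $-\int_Q\tkun\vfi_t\,dxdt+\int_Q\adixt{\nabla\tkun}\nabla\vfi\,dxdt=\int_Q\vfi\,d\nu_n^k+\into T_k(u_{0n})\vfi(0,x)\,dx$ for every $\vfi\in C_c^\infty([0,T)\times\Omega)$; letting $n\to\infty$ — the left-hand side passes to the limit by bounded a.e.\ convergence of $\tkun$ and weak $L^{p'}$-convergence of $\adixt{\nabla\tkun}$, while on the right $\nu_n^k\overset{*}{\rightharpoonup}\nu^k$ and $T_k(u_{0n})\to T_k(u_0)$ in $\elle1$ — and inserting $\nu^k=f+\mu+\la^k$, one obtains precisely \eqref{req} with datum $f+\mu$. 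Together with \eqref{limnu} this shows that $u$ is a renormalized solution corresponding to $f+\mu$ in the sense of Definition~\ref{rendef}. The main obstacle is the uniform-in-$n$ control of the level-set integrals $\int_{\{|u_n|>k\}}|f_n+\mu_n|$: its $f_n$-part rests on equi-integrability plus the uniform decay of $\meas{\{|u_n|>k\}}$, and its $\mu_n$-part on coupling the capacitary estimate of Theorem~\ref{stimcap2} with the equidiffuse property of $(\rho_n\ast\mu)$ from Proposition~\ref{equi}; the rest is a routine repetition of the limit passage already performed for Theorem~\ref{app}.
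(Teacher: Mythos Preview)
Your proof is correct and follows essentially the same route as the paper's: derive the truncated equation, control the defect measure via $\int_{\{|u_n|>k\}}|f_n+\mu_n|$, and make this small uniformly in $n$ by combining the equi-integrability of $(f_n)$, the equidiffuse property of $(\rho_n\ast\mu)$ from Proposition~\ref{equi}, and the capacitary estimate of Theorem~\ref{stimcap2}. The only cosmetic difference is that the paper keeps the smoothing parameter $\delta$ and passes to the limit along a diagonal $\delta_n\to0$ together with $n\to\infty$, whereas you first let $\delta\to0$ (as in the proof of Theorem~\ref{app}) and then $n\to\infty$; both orderings work.
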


\begin{proof} We split the proof in four steps:

\vskip0.3em
\noindent
\emph{Step 1.} Basic estimates.
\smallskip

Both sequences $(\mu_n)$ and $(f_n)$ are bounded in \(L^1(Q)\).
Therefore the solutions $\ujn$ satisfy the estimates  recalled in Proposition~\ref{pro} and up to a subsequence \eqref{conver} holds. Moreover, Theorem~\ref{stimcap} applies and gives the estimate
\be\label{cap-unif}
\capp\{ |\ujn|>k\} \leq C \max{\big\{ k^{-\frac1p},k^{-\frac1{p'}} \big\}} \quad \forall k \ge 1,\quad \forall n \ge 1.
\ee
Finally, for $\de>0$ consider the function $h_\de(s)=\frac1\de (T_{k+\de}(s)-T_k(s))$, which is a piecewise linear  odd function vanishing for $|s|\leq k$ and constant for $|s|>k+\de$.  Let $\xi_\eps(t) =1- \frac1\vep\,T_\vep(t- T + 2\eps)^+$, where $\eps > 0$; this function belongs to \(W^{1, \infty}(0, T)\) and has compact support in \([0, T)\). By using $h_\de(\ujn) \xi_\eps$ as test function in \eqref{appjn} and letting \(\eps \to 0\), we have
\begin{multline*}
\frac1\de\int\limits_{\{k<|\ujn|<k+\de\}}\adixt{\nabla \ujn}\nabla \ujn\,dxdt
\\
\leq \int_Q h_\de(\ujn)\mu_n\,dxdt
+ \int_Q f_{n} h_\de(\ujn)\,dxdt+ \int\limits_{\{|u_{0n}|>k\}} |u_{0n}|\,dx,
\end{multline*}
which implies, in particular,
\begin{multline}
\label{codajn}
\frac1\de\int\limits_{\{k<|\ujn|<k+\de\}}\adixt{\nabla \ujn}\nabla \ujn \, dxdt
\\
\begin{aligned}
& \leq \int\limits_{\{|\ujn|>k\}} |\mu_n|\,dxdt
+ \int\limits_{\{|\ujn|>k\}}|f_{n}|\,dxdt+  \int\limits_{\{|u_{0n}|>k\}} |u_{0n}|\,dx.
\end{aligned}
\end{multline}

\vskip0.3em
\noindent
\emph{Step 2.} Equation satisfied by truncations.
\smallskip

For $\delta>0$ small, consider
the functions $S_{k,\delta}$ and $T_{k,\delta}$ given by \eqref{skd} and \eqref{tkd}, respectively.
Recall that $T_{k,\delta}$ converges pointwise to $T_{k}$ as $\delta \to 0$.\\
Given $\varphi \in C_c^\infty(Q)$, multiply the equation solved by $\ujn$ by $\skdujn \varphi$.  We then have
\begin{multline*}\label{equatjn}
\tkdujn_t  -\dive\left(  \skdujn\adixt{\nabla \ujn}\right)  
\\ = f_{n}+ \mu_n + (\skdujn-1)(f_{n}+\mu_n) \\+\frac{1}{\delta}\adixt{\nabla \ujn}\nabla \ujn {\rm sign}(u_n)\chi_{\{k< |u_n|<k+\delta\}} \quad \text{in}\ \mathcal{D}'(Q).
\end{multline*}
 Let
\[\label{defnu}
\Lambda^k_{n,\de}:= (\skdujn-1)(f_{n}+\mu_n) +\frac{1}{\delta}\adixt{\nabla \ujn}\nabla \ujn  {\rm sign}(u_{n})\chi_{\{k< |\ujn|<k+\delta\}}
\]
With this notation we have
\begin{equation}
\label{equatjn-bis}
(T_k(\ujn))_t-\dive\left(  \skdujn\adixt{\nabla \ujn}\right) = f_{n}+ \mu_n + \Lambda^k_{n, \delta} \quad \text{in}\ \mathcal{D}'(Q).
\end{equation}
Thanks to estimate \eqref{codajn}, and since $|S_{k,\de}|\leq 1$,  the functions \(\Lambda^k_{n,\de}\) are bounded in $L^{1}(Q)$ uniformly with respect to $\de$ and $n$.  Indeed, to be more precise, \eqref{codajn} and the definition of $S_{k,\de}$ imply the estimate 
\be\label{stimanu}
\begin{split}
\|\Lambda^k_{n,\de}\|_{\parelle1} 
& \leq 
\int\limits_{\{ |\ujn|>k\}} |f_{n}+ \mu_n|\,dxdt+ \frac1\de\int\limits_{\{k<|\ujn|<k+\de\}}\adixt{\nabla \ujn}\nabla \ujn\,dxdt\\
& \leq 
2\int\limits_{\{|\ujn|>k\}} |\mu_n|\,dxdt
+2 \int\limits_{\{|\ujn|>k\}}|f_{n}|\,dxdt+  \int\limits_{\{|u_{0n}|>k\}} |u_{0n}|\,dx\,.
\end{split}
\ee

\vskip0.3em
\noindent
\emph{Step 3.} Limit as $n$ goes to infinity.

\smallskip
Applying Proposition~\ref{pro} to the sequence $(\ujn)$,  there exists a function $u\in L^1(Q)$ such that $T_k(u)\in \psob$ for all $k>0$ and, up to a subsequence, 
$$
\ujn\to u,\quad \nabla \ujn\to \nabla u \quad \hbox{a.e. in $Q$.}
$$
Let now \((\delta_n)\) be a sequence of positive numbers converging to \(0\), as \(n \to \infty\).  Using the definition of $S_{k,\de}(t)$ (see \rife{skd}), the fact that $\nabla u=0$ a.e. in  $\{|u|=k\}$ and $\adixt 0=0$ (a consequence of \rife{coercp}), we deduce that
$$
S_{k, \delta_n}(u_n)\adixt{\nabla \ujn} \to  \adixt{\nabla T_k(u)}\quad \hbox{a.e. in $Q$.}
$$
Since $S_{k, \delta_n}(u_n)\adixt{\nabla \ujn}$ is bounded in $L^{p'}(Q)$, it also follows that
$$
S_{k, \delta_n}(u_n)\adixt{\nabla \ujn} \rightharpoonup \adixt{\nabla T_k(u)}\quad \hbox{weakly in $\pelle{p'}$.}
$$
By properties of convolution, clearly we have $\mu_n \overset{*}{\rightharpoonup} \mu$ in the weak$^*$ topology of measures.  Finally, since the sequence $(\Lambda^k_{n, \delta_n})_n$ is bounded in \(L^1(\Omega)\), 
there exists a bounded measure $\la_k$ such that, up to  a subsequence, \((\Lambda^k_{n, \delta_n})_n\) converges to $\la_k$ in the weak$^*$ topology of $C(\overline Q)'$.  
We deduce from \eqref{equatjn-bis} that $u$ satisfies 
\begin{multline*}
-\int_Q T_k(u)\vfi_t\,dxdt +\int_Q \adixt {\nabla T_k(u)}\nabla \vfi\,dxdt
\\
= \int_Q \vfi f\,dxdt+ 
\int_Q \vfi\,d\mu+\int_Q \vfi\,d\la_k+\into T_k(u_{0})\vfi(0, x)\,dx,
\end{multline*}
for every $\vfi\in C^\infty_c([0,T)\times \Omega)$.

\vskip0.3em
\noindent
\emph{Step 4.} Estimate on ${\la_k}$.

\smallskip
By weak$^*$ lower semicontinuity of the norm, we have
\[\label{wls}
\|\la_k\|_{\pmisure}\leq \liminf_{n\to \infty} \|\Lambda^k_{n, \delta_n}\|_{\pmisure}.
\]
Since, by Proposition~\ref{equi},  the sequence $(\mu_n)$ is equidiffuse,  thanks to the uniform estimate \eqref{cap-unif} we deduce that
$$
\lim\limits_{k\to \infty} \sup\limits_{n} \int\limits_{\{|\ujn|>k\}} |\mu_n|\,dxdt=0.
$$
By the equi-integrability of $(u_{0n})$ and $(f_{n})$, the last two terms in \eqref{stimanu} satisfy a similar property. Then, by \eqref{stimanu} we have
$$
\|\la_k\|_{\pmisure}\leq \liminf\limits_{n\to \infty} \|\Lambda^k_{n, \delta_n}\|_{\pmisure}  \leq  \vep_k,
$$
where $\vep_k$ is some quantity which tends to zero as $k\to\infty$. 
Letting   $k\to \infty$ we conclude that $\la_k$ satisfies \eqref{limnu}, hence $u$ is a renormalized solution.
\end{proof}

\vskip1em
We immediately deduce by the  previous results the following 

\begin{theorem}Let $\mu$ be a diffuse measure, and $u_0\in \elle1$. Assume that \eqref{coercp}-\eqref{monot} hold true. Then there exists  a unique renormalized solution of \eqref{renpb} in the sense of Definition~\ref{rendef}.
\end{theorem}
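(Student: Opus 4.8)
The plan is to assemble the statement directly from the two workhorses already established in Section~4: Proposition~\ref{jn-app} will give existence, and Theorem~\ref{contrazione} will give uniqueness, so that at this final stage essentially nothing new has to be done.

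For existence, I would choose the approximating data as follows. Take $\mu_n = \rho_n \ast \mu$, the mollification of $\mu$: this is a sequence of bounded (indeed $C^\infty$) functions with $\|\mu_n\|_{\pmisure} \le \|\mu\|_{\pmisure}$ and $\mu_n \overset{*}{\rightharpoonup} \mu$ in the weak$^*$ topology of measures, and by Proposition~\ref{equi} it is equidiffuse. Take $(u_{0n})$ to be any sequence in $C_c^\infty(\Omega)$ (or simply in $\elle1\cap\lio$) converging strongly to $u_0$ in $\elle1$. Then apply Proposition~\ref{jn-app} with $f_n \equiv 0$, hence $f \equiv 0$: the proposition produces a measurable function $u$ with $T_k(u)\in\psob$ for every $k>0$ and $u\in L^\infty(0,T;\elle1)$, which is, up to a subsequence, the a.e.\ limit of the approximating solutions and is a renormalized solution of \eqref{renpb} in the sense of Definition~\ref{rendef} corresponding to the datum $f+\mu = \mu$. (By Proposition~\ref{ren-dis} this $u$ is in particular a distributional solution, so the formulation is consistent.)

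For uniqueness, let $u_1$ and $u_2$ be two renormalized solutions of \eqref{renpb} with the \emph{same} data $(u_0,\mu)$. Applying Corollary~\ref{compari} (or directly Theorem~\ref{contrazione}) with $(u_{01},\mu_1)=(u_{02},\mu_2)=(u_0,\mu)$ yields
$$
\into |u_1-u_2|(t)\,dx \leq \|u_0-u_0\|_{\elle1} + \|\mu-\mu\|_{\pmisure} = 0
$$
for almost every $t\in(0,T)$, whence $u_1=u_2$ a.e.\ in $Q$.

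There is therefore no genuine obstacle remaining at this point: the content has already been distributed into the earlier results. It is worth recalling where the real difficulty sat. The existence step, via Proposition~\ref{jn-app}, crucially uses the capacitary estimate of Theorem~\ref{stimcap} (uniform along the approximation), the equidiffuse property of $(\rho_n\ast\mu)$ from Proposition~\ref{equi}, and the a.e.\ gradient convergence of Proposition~\ref{pro}; these are exactly what make the remainder terms $\Lambda^k_{n,\delta_n}$ converge to measures $\la_k$ satisfying \eqref{limnu}. The uniqueness step, via Theorem~\ref{contrazione}, relies on the robustness of Definition~\ref{rendef} — in particular on enlarging the test-function class as in Proposition~\ref{testW} and on the Steklov-averaging argument — which is precisely the advantage of a formulation not tied to the decomposition \eqref{cap3c1}. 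The final theorem is then just the synthesis of these facts.
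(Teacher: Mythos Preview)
Your proposal is correct and matches the paper's approach exactly: the paper itself gives no separate proof for this theorem, stating only that it is ``immediately deduce[d] by the previous results,'' namely existence from Proposition~\ref{jn-app} (with $f_n\equiv 0$) and uniqueness from Theorem~\ref{contrazione}/Corollary~\ref{compari}. Your summary of where the real work was done is accurate and in the spirit of the exposition.
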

\vskip1em

Let us point out that the same arguments used in the construction of a renormalized solution could also be employed for  a stability result of renormalized solutions corresponding to  data $\mu$ which are weakly 
converging and equidiffuse. 
 However, to this purpose one would need the capacitary estimate of Theorem \ref{stimcap} to hold  for renormalized solutions as well. A major problem here is that solutions with measure data may not possess a cap-quasicontinuous representative, hence  $\capp(\{|u| > k\})$ is not well defined for all renormalized solutions $u$. To overcome this obstacle, the following weak form of  Theorem~\ref{stimcap} can be proved.

\begin{proposition}\label{tm} Let $u$ be a renormalized solution of \eqref{renpb}. Then, for every $m \ge 1$ there exist positive functions $z_m$, $w_m
\in W$ such that:
\begin{itemize}
\item[(i)]  $-w_m\leq T_m(u)\leq z_m$ a.e. in $Q$.
\vskip0.5em
\item[(ii)] $  \|z_m\|_W+ \|w_m\|_W \leq C \max\big\{ m^{\frac1p},m^{\frac1{p'}}\big\}$
\end{itemize}
where $C=C(\|\mu\|_{\cM(Q)},\|u_0\|_{\elle1}, p, \alpha,\beta,\|b\|_{\pelle{p'}})$.
In particular,
 $$
 \capp(\{z_m>m\})+  \capp(\{w_m>m\})\leq C\max\left\{ \frac1{m^{\frac1p}},\frac1{m^{\frac1{p'}}}\right\}.
 $$ 
\end{proposition}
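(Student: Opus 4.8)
The plan is to transport the capacitary estimate of Theorem~\ref{stimcap2}, which is available only for genuine weak solutions, to an arbitrary renormalized solution $u$ of \eqref{renpb} by passing to the limit along the approximation used in Proposition~\ref{jn-app}. Concretely, I would fix a sequence $(u_{0n})$ in $C_c^\infty(\Omega)$ converging to $u_0$ in $\elle1$ and set $\mu_n=\rho_n\ast\mu$, with $(\rho_n)$ as in \eqref{2.1a}. Then $\mu_n$ is a bounded function on $Q$, hence $\mu_n\in\cM(Q)\cap\pw-1p'$ with $\|\mu_n\|_{\cM(Q)}\le\|\mu\|_{\cM(Q)}$, and $u_{0n}\in\elle2$ with $\sup_n\|u_{0n}\|_{\elle1}<\infty$, so that Theorem~\ref{stimcap2} applies to the weak solution $u_n\in W$ of \eqref{nonlinu} with data $(\mu_n,u_{0n})$. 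On the other hand, by Proposition~\ref{jn-app} applied with $f_n\equiv 0$, up to a subsequence $u_n\to\widetilde{u}$ a.e.\@ in $Q$, where $\widetilde{u}$ is a renormalized solution of \eqref{renpb} with datum $\mu$; since renormalized solutions are unique by Theorem~\ref{contrazione}, we have $\widetilde{u}=u$, and in particular $T_m(u_n)\to T_m(u)$ a.e.\@ in $Q$ for every $m\ge 1$.

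Next, I fix $m\ge 1$ and use the fact that the proof of Theorem~\ref{stimcap2} actually furnishes, for each $n$, nonnegative functions $z_{m,n},w_{m,n}\in W$ with
\[
-w_{m,n}\le T_m(u_n)\le z_{m,n}\quad\text{a.e.\@ in }Q
\]
and
\[
\|z_{m,n}\|_W+\|w_{m,n}\|_W\le C\max\Big\{m^{\frac1p},m^{\frac1{p'}}\Big\},
\]
where $C$ depends only on $\|\mu_n\|_{\cM(Q)}$, $\|u_{0n}\|_{\elle1}$, $\|b\|_{\pelle{p'}}$, $\alpha$, $\beta$ and $p$; by the uniform bounds obtained above, $C$ can be chosen independent of $n$. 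Being bounded in $W$, the sequences $(z_{m,n})_n$ and $(w_{m,n})_n$ converge, along a further subsequence, weakly in $W$ and a.e.\@ in $Q$ to nonnegative functions $z_m,w_m\in W$. Passing to the limit as $n\to\infty$ in the two pointwise inequalities, and using $T_m(u_n)\to T_m(u)$ a.e., gives (i), while the weak lower semicontinuity of $\|\cdot\|_W$ gives (ii).

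For the last assertion, since $z_m,w_m\in W$ they admit cap-quasicontinuous representatives, so the level sets $\{z_m>m\}$ and $\{w_m>m\}$ are cap-quasi open; applying Lemma~\ref{capquasi} with the competitors $z_m/m$ and $w_m/m$ and using $1-1/p=1/p'$ yields
\[
\capp(\{z_m>m\})+\capp(\{w_m>m\})\le\frac{\|z_m\|_W+\|w_m\|_W}{m}\le C\max\Big\{\frac1{m^{\frac1p}},\frac1{m^{\frac1{p'}}}\Big\}.
\]
I expect the main (and essentially only) delicate point to be the reduction carried out in the first paragraph: one must know that the \emph{arbitrary} renormalized solution of the statement coincides with the limit produced by Proposition~\ref{jn-app} — which is precisely where uniqueness, Theorem~\ref{contrazione}, is used — and one must check that the constant in Theorem~\ref{stimcap2} depends on $\mu_n$ and $u_{0n}$ only through $\|\mu_n\|_{\cM(Q)}$ and $\|u_{0n}\|_{\elle1}$, so that it survives the passage to the limit. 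This is also the reason the conclusion has to be phrased through the auxiliary functions $z_m,w_m$ rather than directly through $\capp(\{|u|>m\})$, which need not be well defined for renormalized solutions. Everything else — weak compactness in $W$, a.e.\@ convergence of the truncations $T_m(u_n)$, and lower semicontinuity of the norm — is routine.
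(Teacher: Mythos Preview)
Your proof is correct, but it takes a genuinely different route from the paper's.

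The paper works \emph{directly} at the renormalized level. In the nonnegative case ($\mu\ge 0$, $u_0\ge 0$), it defines $z_m$ by the same backward problem as in Step~2 of Theorem~\ref{stimcap} and then proves the Kato-type inequality
\[
(T_m(u))_t-\dive\big(a(t,x,\nabla T_m(u))\big)\ge 0
\]
for the renormalized solution itself, via Lemma~\ref{testu} (the Steklov-average machinery). Comparison with $z_m$ then gives $T_m(u)\le z_m$. For signed data, it compares $u$ with the renormalized solutions associated to $(\mu^+,u_0^+)$ and $(-\mu^-,-u_0^-)$ through Theorem~\ref{contrazione}. The paper explicitly remarks, right after the proof, that this avoids the regularising procedure of Step~5 of Theorem~\ref{stimcap}.

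Your approach instead \emph{revives} that regularising procedure: you produce $z_{m,n},w_{m,n}$ for the smooth approximations $u_n$ from the proof of Theorem~\ref{stimcap2}, pass to weak limits in $W$, and use uniqueness (Theorem~\ref{contrazione}) to identify the a.e.\ limit of $u_n$ with the given renormalized solution. This is shorter and bypasses Lemma~\ref{testu} entirely, at the price of relying on uniqueness and on extracting the auxiliary functions from the \emph{proof} (not the statement) of Theorem~\ref{stimcap2}. The paper's intrinsic argument, by contrast, works without approximating the data and is therefore more portable to situations (e.g.\ with additional lower-order terms) where uniqueness or a canonical approximating sequence is not yet in hand.
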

 
\begin{proof}
The proof is done following that of Theorem~\ref{stimcap}. Let us only precise a  few technical modifications. First of all, we start by  considering the case where $\mu\geq 0$  and $u_0 \ge 0$,  which implies that $u\geq 0$ by Theorem~\ref{contrazione}.  
The estimate in Step 1 of Theorem~\ref{stimcap} follows from Proposition~\ref{ren-dis}. Next we construct  
the function $z_m$ in the same way as in Step 2 of Theorem~\ref{stimcap} (replacing  $-\Delta_p(\cdot)$ with $-\dive(a(t,x,\nabla (\cdot) ))$): here we assume that $T_m(u(T))$ is well defined   (there is however no loss of generality, otherwise one can use some sequence $t_n\uparrow T$ such that $T_m(u)(t_n)$ is well defined, recall that $u\in L^\infty(0;T;\elle1)$ and such a sequence of Lebesgue points certainly exists). Using assumptions \eqref{coercp} and \eqref{cont}, one obtains the estimate on  $z_m$:
$$
\|z_m\|_W \leq C \max\{ m^{\frac1p},m^{\frac1{p'}}\}\qquad \hbox{where $C=C(\|\mu\|, \|u_0\|,p,\alpha,\beta, \|b\|_{\pelle{p'}})$.}
$$
Finally, to conclude that $z_m\geq T_m(u)$, we show that
\be\label{postr}
T_m(u)_t- \dive(a(t,x,\nabla T_m(u)))\geq 0.
\ee
To this purpose, we use  \eqref{Sh} with $\psi= - S_{m,\de}(r^+)$,  where $S_{m,\de}$ is defined in \eqref{skd}. Since $\mu\geq 0$,  we have, for every nonnegative $\xi\in C^\infty_c([0,T)\times \Omega)$,
$$
\int_Q \xi {\psi(T_k(u))_h}\,d\mu+\|\xi\|_\infty\|\psi\|_\infty\|\la_k\| \leq  \|\xi\|_\infty \|\la_k\| \to 0 .
$$
Then we let $h\to 0$, $k\to \infty$ and we obtain
\begin{multline*}
- \int_Q \xi_t \int_0^{u} S_{m,\de}(r^+) \,dr \, dxdt- \into \xi(0, x)\, \int_0^{u_0} S_{m,\de}(r^+) \, drdx
\\
+
\int_Q\adixt{\nabla u}\nabla (S_{m,\de}(u^+)\xi) \, dxdt\geq 0.
\end{multline*}
Recalling that $u\geq 0$, dropping the term with $S'_{m,\de}$ which is negative, and letting $\de\to0$ we deduce \eqref{postr}. Therefore we complete Step 3 of Theorem~\ref{stimcap} and the conclusion in case that $\mu\geq0$ and $u_0\geq 0$. Since $-\mu^-\leq \mu\leq \mu^+$ and \(-u_0^- \le u_0 \le u_0^+\), the general case can  be obtained  comparing $u$ with the solutions corresponding to  $\mu^+$, \(u_0^+\) and $-\mu^-$, \(-u_0^-\), which is possible  thanks to the comparison principle for renormalized solutions (Theorem~\ref{contrazione}). 
\end{proof}

Let us notice that, in the previous proof, we do not need anymore the regularizing procedure of Step 5 of Theorem~\ref{stimcap} since we can compare directly renormalized solutions which are defined even if data do not belong to $L^{p'}(0,T;\duale)$.

We deduce now an energy estimate for the  renormalized solutions which extends  the classical one known  in case of $L^1$ data.

\begin{proposition}\label{stimfett} Let $u$ be a renormalized solution of \eqref{renpb}. Then, for every $ m \geq 1$ there exists a  cap-quasi open set $E_m \subset Q$ such that 
\be\label{capem}
\capp(E_m)\leq C\max\left\{\frac{1}{m^{\frac{1}{p}}},\frac{1}{m^\frac{1}{p'}}\right\} 
\end{equation}
where $C = C( \|\mu\|_{\mathcal{M}(Q)}, \|u_0\|_{\elle1},  p, \al,\beta,\|b\|_{\pelle{p'}})$, and, for every $\de\in(0,1]$, 
\be\label{fett}
\frac1\de\int\limits_{\{m<|u|<m+\de\}} \adixt{\nabla u}\nabla u\,dxdt\leq \int\limits_{\{|u_0|>m\}} |u_0|\,dx + |\mu|\left(E_m\right).
\ee
In particular, we have
\be\label{asy-bm}
\lim\limits_{m\to \infty} \int\limits_{\{m<|u|<m+1\}}\!\!\!\adixt{\nabla u}\nabla u\,dxdt=0.
\ee
Moreover, we have the estimate
\be\label{estnum}
\|\la_m\|_{\cM(Q)} \leq  \int\limits_{\{|u_0|>m\}} |u_0|\,dx+ 2\,  |\mu|(E_m) 
\ee
where $\la_m$ is the measure associated to the renormalized equation of $u$.
\end{proposition}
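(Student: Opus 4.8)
The plan is to derive all four assertions from the weak capacitary estimate of Proposition~\ref{tm} together with the Steklov-averaged renormalized inequality \eqref{Sh} of Lemma~\ref{testu}. Fix $m\ge1$ and let $z_m,w_m\in W$ be the nonnegative functions provided by Proposition~\ref{tm}, so that $-w_m\le T_m(u)\le z_m$ a.e.\ in $Q$ and $\|z_m\|_W+\|w_m\|_W\le C\max\{m^{1/p},m^{1/p'}\}$. I would take $E_m:=\{z_m>m/2\}\cup\{w_m>m/2\}$, a cap-quasi open set (which I may enlarge to an open set of comparable $p$-parabolic capacity with $|\mu|(\partial E_m)=0$ if convenient). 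Applying Lemma~\ref{capquasi} to $\tfrac2m z_m$ and $\tfrac2m w_m$ gives $\capp(E_m)\le\tfrac2m(\|z_m\|_W+\|w_m\|_W)$, which is \eqref{capem}. Moreover, since $T_m(u)=m$ a.e.\ on $\{u>m\}$ and $T_m(u)=-m$ a.e.\ on $\{u<-m\}$, the comparisons $z_m\ge T_m(u)$ and $w_m\ge -T_m(u)$ force $\{|u|>m\}\subset E_m$ up to a Lebesgue-null set.

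To prove \eqref{fett}, I would apply \eqref{Sh} with $\psi=h_\delta:=\tfrac1\delta(T_{m+\delta}-T_m)$, which is nondecreasing, lies in $W^{1,\infty}(\RR)$ with $h_\delta(0)=0$, $\|h_\delta\|_\infty=1$, and $h_\delta'$ compactly supported, and with $\xi=\xi_\eps(t)$ a nonnegative $\xi_\eps\in C^\infty_c([0,T))$ with $\xi_\eps\uparrow1$ and $(\xi_\eps)_t\le0$. For $k>m+\delta$ the energy term in \eqref{Sh} equals $\tfrac1\delta\int_{\{m<|u|<m+\delta\}}\adixt{\nabla u}\nabla u\,\xi_\eps\,dxdt$ (as $\nabla\xi_\eps=0$ and $\nabla T_k(u)=\nabla u$ where $|u|<m+\delta$); the term $-\int_Q\Psi(T_k(u))(\xi_\eps)_t$ with $\Psi=\int_0^\cdot h_\delta\ge0$ is nonnegative and may be dropped; one has $\int_\Omega\Psi(T_k(u_0))\xi_\eps(0)\,dx\le\int_{\{|u_0|>m\}}|u_0|\,dx$; the error $\|h_\delta\|_\infty\|\xi_\eps\|_\infty\|\la_k\|$ tends to $0$ as $k\to\infty$ by \eqref{limnu}; and $(h_\delta(T_k(u)))_h=(h_\delta(u))_h$ for $k>m+\delta$ since $h_\delta$ is saturated. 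Letting $h\to0$, then $k\to\infty$, then $\eps\to0$ (using monotone convergence and $\adixt{\nabla u}\nabla u\ge0$ from \eqref{coercp}), one obtains \eqref{fett}, provided one shows that
\[
\limsup_{h\to0}\ \int_Q(h_\delta(u))_h\,\xi_\eps\,d\mu\ \le\ |\mu|(E_m).
\]

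This last inequality is \emph{the main obstacle}, precisely because $u$, hence $h_\delta(u)$, need not have a cap-quasicontinuous representative. Here one uses that $H:=h_\delta(u)\in\psob\cap L^\infty(Q)$ satisfies $|H|\le\chi_{\{|u|>m\}}\le\chi_{E_m}$ a.e.\ and $|H|\le1$ a.e., so its Steklov averages obey $|H_h|\le1$ a.e., hence (by Corollary~\ref{tech4}, since $H_h\in W$) cap-quasi everywhere and thus $\mu$-a.e., while $H_h$ is supported, in the Lebesgue sense, in a neighborhood of $E_m$ that shrinks to $E_m$ as $h\to0$; combining this with the q.e.\ convergence of Steklov averages of functions in $S\cap L^\infty(Q)$ (Lemmas~\ref{tech3} and~\ref{cap-stek}), with the choice of $E_m$ open with $|\mu|(\partial E_m)=0$, and with the diffuseness of $\mu$, one passes to the limit. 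Once \eqref{fett} holds, \eqref{asy-bm} follows by taking $\delta=1$: the first term on its right-hand side tends to $0$ since $u_0\in\elle1$, and $|\mu|(E_m)\to0$ because $\capp(E_m)\to0$ by \eqref{capem} and $\mu$ is diffuse — a Borel–Cantelli argument along a subsequence with summable capacities, together with the finiteness of $\mu$ (reverse Fatou), upgrades $\capp(E_m)\to0$ to $|\mu|(E_m)\to0$, hence to the full limit.

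Finally, for \eqref{estnum} I would revisit the approximation scheme of Proposition~\ref{jn-app}: the measure $\la_m$ is the weak$^*$ limit in $\cM(Q)$ of $\Lambda^m_{n,\delta_n}=(S_{m,\delta_n}(u_n)-1)\mu_n+\tfrac1{\delta_n}\adixt{\nabla u_n}\nabla u_n\,{\rm sign}(u_n)\chi_{\{m<|u_n|<m+\delta_n\}}$, so $\|\la_m\|_{\cM(Q)}\le\liminf_n\|\Lambda^m_{n,\delta_n}\|_{\cM(Q)}$; by \eqref{codajn}--\eqref{stimanu} (with vanishing $L^1$-part) one has $\|\Lambda^m_{n,\delta_n}\|_{\cM(Q)}\le\int_{\{|u_{0n}|>m\}}|u_{0n}|\,dx+2\int_{\{|u_n|>m\}}|\mu_n|$. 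The first term converges to $\int_{\{|u_0|>m\}}|u_0|\,dx$, while the second is bounded in the limit by $|\mu|(E_m)$ — by the same reasoning as in the proof of \eqref{fett}, using that the level sets $\{|u_n|>m\}$ lie, up to null sets, inside small-capacity sets converging to $E_m$, together with the equidiffuseness of $(\mu_n)$ (Proposition~\ref{equi}). Adding the two bounds yields \eqref{estnum}.
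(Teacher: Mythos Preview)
Your overall strategy for \eqref{capem}, \eqref{fett} and \eqref{asy-bm} is the right one and mirrors the paper: apply Lemma~\ref{testu} with a function $\psi$ of the type $\tfrac1\delta(T_{m+\delta}-T_m)$, choose $\xi=\xi_\eps(t)$ with $\xi_\eps\uparrow 1$, drop the nonnegative time term, let $k\to\infty$ using \eqref{limnu}, and build $E_m$ from the barrier functions of Proposition~\ref{tm}. However, the passage to the limit in the measure term
\[
\int_Q \big(h_\delta(T_k(u))\big)_h\,\xi_\eps\,d\mu
\]
is a genuine gap in your argument. You argue that $H=h_\delta(u)$ satisfies $|H|\le\chi_{E_m}$ a.e.\ and that $H_h$ is supported, \emph{in the Lebesgue sense}, near $E_m$. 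But $\mu$ is not absolutely continuous with respect to Lebesgue measure, so a.e.\ information on the support of $H_h$ does not control $\int_Q H_h\,d\mu$. You then invoke Lemmas~\ref{tech3} and~\ref{cap-stek}, which concern functions in $S\cap L^\infty(Q)$; yet $h_\delta(u)$ is not known to belong to $S$, precisely because $u$ may fail to have a cap-quasicontinuous representative --- this is the very obstacle the proposition is designed to handle, and it cannot be assumed away.

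The paper resolves this by a domination trick you do not use. Since $\psi_{m,\delta}$ saturates on $[m+\delta,\infty)$ with $\delta\le 1$, one has $\psi_{m,\delta}(T_k(u))=\psi_{m,\delta}(T_{m+1}(u))$ for $k>m+1$; then the comparison $-w_{m+1}\le T_{m+1}(u)\le z_{m+1}$ from Proposition~\ref{tm} yields pointwise $\psi_{m,\delta}(T_k(u))^+\le \psi_{m,\delta}(z_{m+1})$ and $\psi_{m,\delta}(T_k(u))^-\le \psi_{m,\delta}(w_{m+1})$. The crucial gain is that $z_{m+1},w_{m+1}\in W$, so by Lemma~\ref{tech1} (which requires $\psi_{m,\delta}\in C^2$, hence the paper first works with smooth $\psi_{m,\delta}$ and only afterwards approximates by the piecewise linear one) the functions $\psi_{m,\delta}(z_{m+1}),\psi_{m,\delta}(w_{m+1})$ lie in $S\cap L^\infty(Q)$. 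Now Lemma~\ref{cap-stek} gives cap-q.e.\ convergence of their Steklov averages, hence $|\mu|$-a.e.\ convergence since $\mu$ is diffuse, and dominated convergence yields $\limsup_{h\to0}\int_Q|(\psi_{m,\delta}(T_k(u)))_h|\,d|\mu|\le |\mu|(E_m)$ with $E_m=\{z_{m+1}>m\}\cup\{w_{m+1}>m\}$. Note in particular that one needs the barriers at level $m{+}1$, not $m$, to capture the full range where $\psi_{m,\delta}$ varies.

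For \eqref{estnum} you take a different route from the paper, going back to the approximating sequence of Proposition~\ref{jn-app} and bounding $\|\la_m\|$ by $\liminf_n\|\Lambda^m_{n,\delta_n}\|$. This is legitimate in principle (uniqueness identifies $u$ with the limit, and $\la_m$ is determined by $u$), but your claim that $\liminf_n\int_{\{|u_n|>m\}}|\mu_n|\le|\mu|(E_m)$ is not justified: equidiffuseness and Theorem~\ref{stimcap2} only give a bound that vanishes as $m\to\infty$, not the precise value $|\mu|(E_m)$ for fixed $m$. The paper instead works intrinsically: it applies \eqref{Sh} with $\psi=\psi_{m,\delta}(-s^-)$ and \eqref{S-h} with $\psi=\psi_{m,\delta}(s^+)$, subtracts, adds the renormalized identity \eqref{req}, and recognizes in the limit $\delta\to0$ the measure $(T_m(u))_t-\dive(\adixt{\nabla T_m(u)})-\mu=\la_m$ on the left-hand side; the right-hand side is controlled by the same domination argument as above and by the already established \eqref{fett}, giving \eqref{estnum} directly.
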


\begin{proof}  Let $\psi_{m,\de}(s)$ be a $C^2(\RR)$ function which is  nondecreasing, odd and  such that $\psi_{m,\de}(s)=0$ if $|s|\leq m$ and $\psi_{m,\de}(s)=1$ if $s\geq m+\de$.  We use \eqref{Sh} with $\xi\in C^\infty_c([0,T))$ and $\psi=\psi_{m,\de}$.  Assuming $k>m+1$, we obtain
\begin{multline*}
  - \int_Q \xi_t \int_0^{T_k(u)} \psi_{m,\de}(r) \,dr\,dxdt + \int\limits_{\{m<|u|<m+\de\}}\!\!\!\!\! \adixt{\nabla u}\nabla u\,\psi_{m,\de}'(u)\xi \,dxdt
  \\
\leq    \int_Q\xi  {\big(\psi_{m,\de}(T_k(u))\big)_h}\,d\mu+ \|\xi\|_\infty\|\la_k\|_{\cM(Q)} + \|\xi\|_\infty \int\limits_{\{|u_0|>m\}}\!\!\! |u_0| \,dx + o(1)_h
\end{multline*}
where $o(1)_h$ tends to zero as $h\to 0$. Taking $\xi_\vep\in C^\infty_c([0,T))$ such that $(\xi_\vep)_t\leq 0$  and $\xi_\vep\nearrow 1$ pointwise in $(0,T)$   we get
\begin{multline}\label{fettm}
\int\limits_{\{m<|u|<m+\de\}} \!\!\!\!\!\adixt{\nabla u}\nabla u \,\psi_{m,\de}'(u)\,dxdt 
\\ \leq   \int_Q | {\psi_{m,\de}(T_k(u))_h}|\,d|\mu|  +\|\la_k\|_{\cM(Q)}+ \int\limits_{\{|u_0|>m\}} \!\!\! |u_0| \,dx 
 +  o(1)_h.
\end{multline}
Since $k>m+1$, we have $\psi_{m,\de}(T_k(u))= \psi_{m,\de}(T_{m+1}(u))$, hence using Proposition~\ref{tm}  we have 
\be\label{stimm}
\psi_{m,\de}(T_k(u))^+ \leq \psi_{m,\de}(z_{m+1}) \quad \text{and} \quad  \psi_{m,\de}(T_k(u))^- \leq  \psi_{m,\de}(w_{m+1}),
\ee
and since the inequalities are preserved taking the average and the cap-quasi continuous representatives we deduce from \eqref{fettm}:
\begin{multline*}
\int\limits_{\{m<|u|<m+\de\}} \!\!\!\!\!\adixt{\nabla u}\nabla u \,\psi_{m,\de}'(u)\,dxdt\\
\leq \int_Q \left[  { (\psi_{m,\de}(z_{m+1}))_h}+  {(\psi_{m,\de}(w_{m+1}))_h} \right]\,d|\mu| +\|\la_k\|_{\cM(Q)}\\
+  \int\limits_{\{|u_0|>m\}} \!\!\! |u_0| \,dx 
+o(1)_h.
\end{multline*}
Applying   Lemma~\ref{tech1}, for any $z\in W$ we have   $ \psi_{m,\de}(z)\in S\cap L^\infty(Q)$; hence, it follows from  Lemma~\ref{cap-stek}
that, up to subsequences, ${  {(\psi_{m,\de}(z))_h}}\to  \psi_{m,\de}(  z)$ cap-quasieverywhere. We deduce, by  dominated convergence, 
that  $ (\psi_{m,\de}(z_{m+1}))_h$ and $ { (\psi_{m,\de}(w_{m+1}))_h}$ both converge in $L^1(d|\mu|)$. Then, passing to the limit 
as $h\to 0$ and then $k\to \infty$ we obtain
\begin{multline*}
\int\limits_{\{m<|u|<m+\de\}}\!\!\!\!\!\! \adixt{\nabla u}\nabla u\,\psi_{m,\de}'(u) \,dxdt   \\
\le \int_Q \left[  { (\psi_{m,\de}(z_{m+1}))}+  {(\psi_{m,\de}(w_{m+1}))} \right]d|\mu| + \int\limits_{\{|u_0|>m\}} \!\!\! |u_0| \,dx
.
\end{multline*}
Setting $E_m= \{(t,x): z_{m+1}> m\} \cup\{(t,x):  w_{m+1}>m\}$, we have, by definition of $\psi_{m,\de}$, that
$$
\int_Q \left[  { (\psi_{m,\de}(z_{m+1}))}+  {(\psi_{m,\de}(w_{m+1}))} \right]d|\mu| \leq |\mu|\left( E_m\right),
$$
hence we get 
\be\label{2inf}
\int\limits_{\{m<|u|<m+\de\}}\!\!\!\!\!\! \adixt{\nabla u}\nabla u\,\psi_{m,\de}'(u) \,dxdt \leq    |\mu|\left( E_m\right) + \int\limits_{\{|u_0|>m\}} \!\!\! |u_0| \,dx,
\ee
for every nondecreasing, odd function $\psi\in C^2(\RR)$ such that $\psi_{m,\de}=0$ if $|s|\leq m$ and $\psi_{m,\de}=1$ if $s\geq m+\de$. By approximations with $C^2$ functions, the same inequality will be satisfied by the piecewise linear function $\psi_{m,\de}=\frac1\de T_{\de}(s-T_m(s))$, hence we get \eqref{fett}.  Since
$$
\capp(E_m)\leq \frac{\|z_{m+1}\|_W}{m}+ \frac{\|w_{m+1}\|_W}{m} \,,
$$
the estimate \eqref{capem} follows from Proposition~\ref{tm}. Then, since $\mu$ is diffuse, \rife{fett} implies  \eqref{asy-bm}.

Finally, we want to  estimate $\|\la_m\|_{\cM(Q)}$. To this aim, 
we use  Lemma~\ref{testu} and in particular \eqref{Sh}
with $\psi= \psi_{m,\de}(-s^-)$ and $\eqref{S-h}$ with $\psi= \psi_{m,\de}(s^+)$,  again with $\psi_{m,\de} \in C^2(\RR)$ (and with the same properties as above). We subtract the two inequalities and  we find
\begin{multline*}
-\int_Q \xi_t \int_{T_k(u_0)}^{T_k(u)} (\psi_{m,\de}(-r^-)- \psi_{m,\de}(r^+))(r) \, dr\\
 +
\int_Q \adixt{\nabla T_k(u)}\nabla [\xi(\psi_{m,\de}(-T_k(u)^-)- \psi_{m,\de}(T_k(u)^+))] \, dxdt
\\
\leq \int_Q \xi  {\psi_{m,\de}(-T_k(u)^-)_h} \, d\mu - \int_Q \xi  {\psi_{m,\de}(T_k(u)^+)_{-h}} \, d\mu + 
2 \|\xi\|_\infty \|\la_k\|_{\cM(Q)}+o(1)_h.
\end{multline*}
We add  \eqref{req} (with $\vfi=\xi$) to the above inequality, and we  set $\zeta_{m,\de}(s)= \psi_{m,\de}(-s^-)- \psi_{m,\de}(s^+) +1$. Note that $\zeta_{m,\de}(s)= - {\rm sign}(s) \psi_{m,\de}(s)+1$ and that $\zeta_{m,\de}\to \chi_{|s|\leq m}$ as $\de\to 0$.  We obtain
\begin{multline*}
-\int_Q \xi_t \int_{T_k(u_0)}^{T_k(u)} \zeta_{m,\de}(r)dr\\ +
\int_Q \adixt{\nabla T_k(u)}\nabla [\xi(\zeta_{m,\de}(T_k(u))]dxdt
- \int_Q \xi d\mu- \int_Q \xi\,d\la_k
\\
\leq \int_Q \xi  {\psi_{m,\de}(-T_k(u)^-)_h}d\mu - \int_Q \xi  {\psi_{m,\de}(T_k(u)^+)_{-h}}d\mu + 
2 \|\xi\|_\infty \|\la_k\|_{\cM(Q)} +o(1)_h.
\end{multline*}
Using \eqref{stimm} and since $\zeta_{m,\de}(T_k(u))= \zeta_{m,\de}(u)$ for $ k>m+1$ we deduce
\begin{multline*}
-\int_Q \xi_t \int_{u_0}^{u} \zeta_{m,\de}(r) \, dr
 +
\int_Q \adixt{\nabla  u}\nabla [\xi(\zeta_{m,\de}(u)] \,dxdt - \int_Q \xi \, d\mu 
\\
\le \|\xi\|_\infty \int_Q \left[  { (\psi_{m,\de}(z_{m+1}))_{-h}}+  {(\psi_{m,\de}(w_{m+1}))_h} \right]\,d|\mu| + 
3 \|\xi\|_\infty \|\la_k\|_{\cM(Q)}+o(1)_h,
\end{multline*}
The right hand side can be estimated as   before, letting $h\to 0$,  obtaining
\begin{multline*}
-\int_Q \xi_t \int_{u_0}^{u} \zeta_{m,\de}(r) \, dr
 +
\int_Q \adixt{\nabla u}\nabla \xi \zeta_{m,\de}(u)\,dxdt - \int_Q \xi \, d\mu 
\\
\leq   - \int_Q \adixt{\nabla u}\nabla u\,\zeta_{m,\de}'(u) \xi\, dxdt + \|\xi\|_\infty |\mu|(E_m) + 
3 \|\xi\|_\infty \|\la_k\|_{\cM(Q)}
\end{multline*}
Now let $k\to \infty$ using \eqref{limnu}. Since $\zeta_{m,\de}' = - {\rm sign}(s) \psi_{m,\de}'(s)$, thanks to \eqref{2inf} we end up with  
\begin{multline*}
-\int_Q \xi_t \int_{u_0}^{u} \zeta_{m,\de}(r) \, dr
 +
\int_Q \adixt{\nabla u}\nabla \xi \zeta_{m,\de}(u)\,dxdt - \int_Q \xi \, d\mu 
\\
\leq   \|\xi\|_\infty  \int\limits_{\{|u_0|>m\}}|u_0|\,dx +2 \|\xi\|_\infty |\mu|(E_m) .
\end{multline*}
Letting $\de\to 0$ we find in the left hand side the term $T_m(u)_t-\dive(\adixt{\nabla T_m(u)})-\mu$ which, using \eqref{req},  coincides with $\la_m$, hence
\[
\int_Q \xi d\la_m 
\leq   \|\xi\|_\infty  \int\limits_{\{|u_0|>m\}}|u_0|\,dx +2 \|\xi\|_\infty |\mu|(E_m).
\]
The same inequality can  be obtained  for $-\la_m$  using now \eqref{Sh} with $\psi= \psi_{m,\de}(s^+)$ and $\eqref{S-h}$ with $\psi= \psi_{m,\de}(-s^-)$. Finally we conclude  
$$
\left | \int_Q \xi d\la_m \right|
\leq   \|\xi\|_\infty  \bigg[ \int\limits_{\{|u_0|>m\}}|u_0|\,dx +2 |\mu|(E_m)\bigg]
$$
which implies \eqref{estnum}.
\end{proof}

\begin{remark}
\rm Thanks to Proposition~\ref{stimfett}, it is possible to prove the stability of renormalized solutions with respect to a sequence of data  $(\mu_n)$ which are weakly converging (in the sense of measures) and equidiffuse. In particular,   the estimate \eqref{estnum} implies that the condition \eqref{limnu} on the sequence $(\la_k)$   holds  uniformly (hence it is stable)   when the measures $\mu_n$ are   equidiffuse.  
\end{remark}

Finally, we conclude this section by showing   that Definition~\ref{rendef} implies that $u$ is a renormalized solution in the sense of \cite{dpp}.  Since both solutions have been proved to be unique, in particular this proves that the formulations are actually equivalent. 

\begin{theorem}
Let $u$ be a renormalized solution according to Definition~\ref{rendef}, and let $\mu$ be  split as in \eqref{cap3c1}, namely 
$$
\mu=f-\dive(G)+g_t,\quad \hbox{$f\in \elle1$, $G\in \parelle{p'}$, $g\in L^p(0,T;V)$.}
$$
Then $u$ satisfies:
\be\label{regug}
u-g\in \limitate1,\qquad T_k(u-g)\in \psob\quad \forall k>0,
\ee
\be\label{asyug}
\lim\limits_{h\to \infty} \int\limits_{\{h<|u-g|<h+1\}}\!\!\!\!\!\!\! |\nabla u|^p\,dxdt=0,
\ee
\begin{multline}\label{equg}
-\int_Q S(u-g)\vfi_t\,dxdt +\int_Q \adixt {\nabla u}\nabla ( S'(u-g)\vfi)\,dxdt
\\
= \int_Q fS'(u-g)\vfi\,dxdt+\int_Q G\nabla(S'(u-g)\vfi))\,dxdt+\into S(u_0)\vfi(0, x)\,dx 
\end{multline}
for every $S\in W^{2,\infty}(\RR)$ such that $S'$ has compact support, for every $\vfi\in C_c^\infty([0,T)\times\Omega)$.
\end{theorem}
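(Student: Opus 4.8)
The plan is to recover, from the formulation of Definition~\ref{rendef} (written in terms of $T_k(u)$), the one of \cite{dpp} (written in terms of $v:=u-g$), by choosing in the extended weak formulation \eqref{req2} of Proposition~\ref{testW} test functions built out of $S'(v)$ and passing to the limit. Two difficulties pervade the argument: neither $v$ nor $T_k(u)-g$ need have a cap-quasi continuous representative, so $S'(v)\vfi$ is not directly admissible in \eqref{req2}; and the part $g_t$ of $\mu$ must be moved to the left and the resulting terms recombined into $\partial_t S(v)$. Both are handled exactly as in Lemma~\ref{testu}, by the Steklov-averaging technique of \cite{BlPo}: one tests \eqref{req2} with $\xi\,\big(\sigma(T_k(u)-g)\big)_h$, where $\xi\in C_c^\infty([0,T))$ and $\sigma$ ranges over suitable truncations and over $S'$, uses \cite[Lemma~2.1 and Lemma~2.3]{BlPo} for the time-derivative terms, lets $h\to 0$ by Lemma~\ref{cap-stek} (so that, along a subsequence, $\big(\sigma(T_k(u)-g)\big)_h\to\sigma(T_k(u)-g)$ cap-quasi everywhere, hence $\mu$- and $\la_k$-almost everywhere, the two measures being diffuse), and finally lets $k\to\infty$ using \eqref{limnu}.

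One first establishes \eqref{regug}. That $u\in\limitate1$ is contained in Proposition~\ref{ren-dis}; moreover $v=u-g$ solves $v_t-\dive(\adixt{\nabla u})=f-\dive(G)$ with right-hand side in $\luq+\paraduale$, hence $v\in C([0,T];\luo)\subset\limitate1$ (cf.\ \cite[Lemma~2.29]{dpp}). To obtain $T_k(v)\in\psob$, one inserts in \eqref{req2}, through the Steklov procedure above, test functions built from $\frac1\delta\big(T_{m+\delta}(v)-T_m(v)\big)$ together with $\xi$ approximating $\chi_{(0,\tau)}$; the $v_t$-term is treated by \cite[Lemma~2.1]{BlPo}, while the splitting $\nabla v=\nabla u-\nabla g$ shows that \eqref{coercp} bounds $\frac1\delta\int_{\{m<|v|<m+\delta\}}\adixt{\nabla u}\nabla v\,dxdt$ from below by $\frac\alpha\delta\int_{\{m<|v|<m+\delta\}}|\nabla u|^p\,dxdt$ minus a remainder $\frac1\delta\int_{\{m<|v|<m+\delta\}}\adixt{\nabla u}\nabla g\,dxdt$ that, by \eqref{cont} and Young's inequality, is absorbed at the price of $\int_{\{m<|v|<m+\delta\}}(b^{p'}+|\nabla g|^p)$; on the right-hand side the terms in $f$, $G$ and $\la_k$ are controlled by $\|f\|_{\luq}$, $\|G\|_{\parelle{p'}}$, $\|\la_k\|_{\pmisure}$ and $k$. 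This yields $\int_Q|\nabla T_k(v)|^p\,dxdt\le Ck$ for all $k>0$.

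The asymptotic estimate \eqref{asyug} follows as in Proposition~\ref{stimfett}, but now for the level sets of $v$: since $v\in\limitate1$, the set $\{m<|v|<m+1\}$ has Lebesgue measure tending to $0$, so $\int_{\{m<|v|<m+1\}}(|f|+b^{p'}+|\nabla g|^p)\to 0$ and $\int_{\{|v|>m\}}|G\cdot\nabla v|\to 0$ by absolute continuity of the Lebesgue integral, and together with the tail smallness of $u_0$ and the Young absorption of the previous step this forces $\int_{\{m<|v|<m+1\}}|\nabla u|^p\,dxdt\to 0$. For \eqref{equg}, one tests \eqref{req2} with $\xi\,\big(S'(T_k(u)-g)\big)_h$ and adds the contribution of $\mu=f-\dive(G)+g_t$: the term $\int_Q g_t(\cdots)$ is integrated by parts in time (legitimate since the Steklov average lies in $W$ and $g\in L^p(0,T;V)$) and, combined with the time-derivative term of \eqref{req2} and rearranged by \cite[Lemma~2.1 and Lemma~2.3]{BlPo}, reconstructs, as $h\to 0$, the expression $-\int_Q S(T_k(u)-g)\vfi_t\,dxdt-\into S(T_k(u_0))\vfi(0,x)\,dx$, the extra $\nabla g$-contributions recombining with the elliptic and the $G$-terms to form $\adixt{\nabla T_k(u)}\nabla\big(S'(T_k(u)-g)\vfi\big)$ and $G\cdot\nabla\big(S'(T_k(u)-g)\vfi\big)$; the $\la_k$-terms vanish as $k\to\infty$ by \eqref{limnu}, and the final passage $k\to\infty$ uses \eqref{regug} (so that $\nabla u=\nabla T_R(v)+\nabla g\in\parelle p$ on $\{|u-g|\le R\}\supset\supp S'(u-g)$), the generalized-gradient convergence $\nabla T_k(u)\chi_{\{|u|<k\}}\to\nabla u$, \eqref{cont} and \eqref{asyug}.

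The main obstacle is the joint limit in $h$ and in $k$ in the time-derivative terms when reconstructing $\partial_t S(u-g)$: one must check that the Steklov approximations of $S'(T_k(u)-g)$ converge simultaneously in duality with the time derivative (which lies in $W'$) and cap-quasi everywhere, so that one may pass to the limit against the diffuse measures $\mu$ and $\la_k$ (Lemma~\ref{cap-stek}), and that no spurious term survives; this is precisely where the identities of \cite{BlPo} are indispensable. A secondary, purely technical, point is the passage $k\to\infty$ in $\int_Q\adixt{\nabla T_k(u)}\nabla g\,S''(T_k(u)-g)\vfi\,dxdt$, which is controlled by the boundedness and compact support of $S''$, by \eqref{cont}, by the a.e.\ convergence of $\nabla T_k(u)$, and by \eqref{asyug}.
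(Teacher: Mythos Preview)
Your proposal is correct and follows essentially the same route as the paper: rewrite \eqref{req} in terms of $v=T_k(u)-g$ by decomposing $\mu=f-\dive(G)+g_t$ and integrating by parts the $g_t$ term, then test with Steklov averages $\xi\,(\psi(v))_h$, use \cite[Lemmas~2.1 and~2.3]{BlPo} on the time-derivative term, let $h\to0$ and finally $k\to\infty$ via \eqref{limnu}; the paper carries this out by first deriving the two one-sided inequalities \eqref{psi1}--\eqref{psi2} for nondecreasing $\psi$ and then combining them (splitting $S''=(S'')^+-(S'')^-$) into \eqref{svk}, from which \eqref{regug}, \eqref{asyug} and \eqref{equg} follow exactly as you describe.

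One small imprecision: your invocation of Lemma~\ref{cap-stek} to get cap-quasi-everywhere convergence of $(\sigma(T_k(u)-g))_h$ is not quite justified, since $T_k(u)-g$ need not lie in $S$ (indeed $(T_k(u))_t$ contains the measure $\la_k$); fortunately this step is unnecessary, because after the integration by parts the measure $\mu$ has disappeared and the remaining $\la_k$-integral is simply bounded by $\|\xi\|_\infty\|\psi\|_\infty\|\la_k\|_{\pmisure}$ using that $|(\psi(v))_h|\le\|\psi\|_\infty$ cap-q.e.\ (Corollary~\ref{tech4}), exactly as the paper does.
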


\begin{proof} We split the proof in two steps.

\vskip0.3em
\noindent
\emph{Step 1.} Set $v=T_k(u)-g$. Then $v\in L^p(0,T;V)$. Moreover, using the  decomposition of $\mu$ in \rife{req}, and integrating by parts the term with $g_t$, we see that $v$ satisfies
\begin{multline*}
-\int_Q v\vfi_t\,dxdt +\int_Q \adixt {\nabla T_k(u)}\nabla \vfi\,dxdt
\\
= \int_Q f\vfi\,dxdt+ \int_Q G\nabla \vfi\,dxdt+\int_Q \vfi\,d\la_k+\into T_k(u_0)\vfi(0, x)\,dx 
\end{multline*}
for every $\vfi\in C_c^\infty([0,T)\times \Omega)$. It is easy to see that the above equality remains true for every $\vfi\in W^{1,\infty}(Q)$. Take then
$$
\vfi(x,t)= \xi(x,t) \frac1h\int_t^{t+h} \psi(v(s, x))ds,
$$
where $\xi\in C_c^\infty([0,T)\times \overline\Omega)$, $\xi\geq0$,  $\xi\psi(0)=0$ on $(0,T)\times \partial \Omega$, and $\psi$ is a  Lipschitz nondecreasing function. 
Since $\psi$ is nondecreasing we have (using \cite[Lemma~2.1]{BlPo})
\begin{multline*}
\liminf\limits_{h\to 0}\bigg\{ -\int_Q (v-T_k(u_0)) \Big(\xi  \frac1h\int_t^{t+h} \psi(v)ds \Big)_t\,dxdt 
\bigg\}\\
\geq  
-\int_Q \left(\int_0^v \psi(r)dr\right)\xi_t\,dxdt-
\into \left(\int_0^{T_k(u_0)} \psi(r)dr\right)\xi(0, x)\,dx.
\end{multline*}
Moreover, 
since $\psi$ is bounded we have
$$
\left|\int_Q \vfi\,d\la_k\right|\leq \|\xi\|_\infty \|\psi\|_\infty \|\la_k\|_{\pmisure}
$$
and since $\psi$ is Lipschitz  we have $\psi(v)\in \psob$, hence $(\psi(v))_h$ converges to $\psi(v)$
strongly in $\psob$ and weakly$^*$ in $\parelle\infty$. Therefore, we deduce, as $h\to 0$,
\begin{multline}\label{psi1}
-\int_Q \left(\int_0^v \psi(r)dr\right)\xi_t\,dxdt
+\int_Q \adixt {\nabla T_k(u)}\nabla (\psi(v)\xi)\,dxdt\\
\leq \int_Q f\psi(v)\xi\,dxdt
+ \int_Q G\nabla (\psi(v)\xi)\,dxdt\\
+ \into \bigg(\int_0^{T_k(u_0)} \psi(r)dr\bigg)\xi(0, x)\,dx + \|\xi\|_\infty \|\psi\|_\infty \|\la_k\|_{\pmisure},
\end{multline}
for every $\psi$ Lipschitz and nondecreasing.
In order to obtain the reverse inequality one can take 
$$
\vfi(x,t)= \xi(x,t) \frac1h\int_{t-h}^{t} \psi(\hat v(s, x))\, ds
$$ 
where $\hat v(x,t)= v(x,t)$ when $t\geq 0$ and $\hat v=U_j$ when $t<0$, being $U_j\in C_c^\infty(\Omega)$ such that $U_j\to T_k(u_0)$ strongly in $\elle1$. Using this time \cite[Lemma~2.3]{BlPo}
we obtain
\begin{multline*}
\liminf\limits_{h\to 0} \bigg\{-\int_Q (v-T_k(u_0))\Big(\xi  \frac1h\int_{t-h}^{t} \psi(v)ds\Big)_t\,dxdt \bigg\}\leq
\\
\leq  
-\int_Q \left(\int_0^v \psi(r)dr\right)\xi_t\,dxdt-
\into \left(\int_0^{U_j} \psi(r)dr\right)\xi(0, x)\,dx
\\
- \into (T_k(u_0)-U_j)\psi(U_j)\xi(0, x)\,dx.
\end{multline*}
Since it is still true that $\hat v\in \psob\cap\parelle\infty$, when $h\to 0$ we can pass to the limit in the other terms as above  and we obtain now
\begin{multline*}
-\int_Q \left(\int_0^v \psi(r)dr\right)\xi_t\,dxdt
+\int_Q \adixt {\nabla T_k(u)}\nabla (\psi(v)\xi)\,dxdt 
\\
\ge \int_Q f\psi(v)\xi\,dxdt+ \int_Q G\nabla (\psi(v)\xi)\,dxdt + \into \left(\int_0^{U_j} \psi(r)dr\right)\xi(0, x)\,dx \\
+ \into (T_k(u_0)-U_j)\psi(U_j)\xi(0, x)\,dx- \|\xi\|_\infty \|\psi\|_\infty \|\la_k\|_{\pmisure},
\end{multline*}
which implies, as $U_j\to T_k(u_0)$, that
\begin{multline}
\label{psi2}
-\int_Q \left(\int_0^v \psi(r)dr\right)\xi_t\,dxdt
+\int_Q \adixt {\nabla T_k(u)}\nabla (\psi(v)\xi)\,dxdt\\
 \geq \int_Q f\psi(v)\xi\,dxdt
+ \int_Q G\nabla (\psi(v)\xi)\,dxdt + \into \left(\int_0^{T_k(u_0)} \psi(r)dr\right)\xi(0, x)\,dx \\
- \|\xi\|_\infty \|\psi\|_\infty \|\la_k\|_{\pmisure}.
\end{multline}
Let $S\in W^{2,\infty}(\RR)$;  we use \eqref{psi1} with $\psi=\int_0^s (S''(t))^+dt$ and \eqref{psi2} with $\psi=\int_0^s (S''(t))^-dt$. Since $S'(s)=\int_0^s (S''(t)^+-S''(t)^-)dt$, subtracting the two inequalities we obtain
\begin{multline}\label{svk}
-\int_Q S(v)\xi_t\,dxdt
+\int_Q \adixt {\nabla T_k(u)}\nabla (S'(v)\xi)\,dxdt 
\\
\le \int_Q fS'(v)\xi\,dxdt+ \int_Q G\nabla (S'(v)\xi)\,dxdt \\
+ \into S(T_k(u_0))\xi(0, x)\,dx + 2\|\xi\|_\infty \|S'\|_\infty \|\la_k\|_{\pmisure},
\end{multline}
for every $S\in W^{2,\infty}(\RR)$ and for every nonnegative $\xi$. 

\vskip0.3em
\noindent
\emph{Step 2.}  Take in \eqref{svk} $S'= \theta_h(s)$, where $\theta_h=T_1(s-T_h(s))$, and $\xi=\xi(t)$. We obtain, denoting $ R_h(s)=\int_0^s \theta_h(\xi)d\xi$, 
\begin{multline*}
-\int_Q R_h(T_k(u)-g)\xi_t\,dxdt
+\int\limits_{\{ h<|u-g|<h+1\}} \!\!\!\!\!\!\!\!\adixt {\nabla T_k(u)}\nabla (T_k(u)-g)\xi\,dxdt
\\
\le \int_Q f\theta_h(T_k(u)-g)\xi\,dxdt+ \int\limits_{\{ h<|u-g|<h+1\}}  \!\!\!\!\!\!\!\!G\nabla (T_k(u)-g)) \xi \,dxdt \\
+ \into R_h(T_k(u_0))\xi(0, x)\,dx + 2\|\xi\|_\infty \|\la_k\|_{\pmisure},
\end{multline*}
which implies, using \eqref{coercp}, \eqref{cont} and Young's inequality,
\begin{multline*}
-\int_Q R_h(T_k(u)-g)\xi_t\,dxdt
+\int\limits_{\{ h<|u-g|<h+1\}} \!\!\!\!\!\!\!\!|\nabla T_k(u)|^p\xi\,dxdt
\\
\le
\int_Q f\theta_h(T_k(u)-g)\xi\,dxdt+  C \int\limits_{\{ h<|u-g|<h+1\}}  \!\!\!\!\!\!\!\! (|G|^{p'}+ |g|^p+ |b|^{p'}) \xi \,dxdt \\
+ \into R_h(T_k(u_0))\xi(0, x)\,dx + 2\|\xi\|_\infty \|\la_k\|_{\pmisure}.
\end{multline*}
Now let $k\to \infty$, thanks to \eqref{limnu} and Fatou's lemma we deduce
\begin{multline*}
-\int_Q R_h(u-g)\xi_t\,dxdt
+\int\limits_{\{ h<|u-g|<h+1\}} \!\!\!\!\!\!\!\!|\nabla u|^p\xi\,dxdt\\
 \leq \int_Q f\theta_h(u-g)\xi\,dxdt
+  C \int\limits_{\{ h<|u-g|<h+1\}}  \!\!\!\!\!\!\!\! (|G|^{p'}+ |g|^p+ |b|^{p'}) \xi \,dxdt\\
  + \into R_h(u_0)\xi(0, x)\,dx .
\end{multline*}
Choosing $\xi=1- \frac 1\vep T_\vep(t-\tau)^+$, for $\tau\in (0,T)$, and letting $\vep \to 0$,  leads to the estimate of $u-g$ in $\limitate1$. Similarly, the usual choice  of nonincreasing $\xi_\vep\in C^\infty_c([0,T))$ such that $\xi_\vep\to1$ allows to get
\begin{multline*}
\int\limits_{\{ h<|u-g|<h+1\}} \!\!\!\!\!\!\!\!|\nabla u|^p\,dxdt\\
\leq \int\limits_{\{|u-g|>h\}} \!\!\!\!\! |f|\,dxdt
+  C \int\limits_{\{ h<|u-g|<h+1\}}  \!\!\!\!\!\!\!\! (|G|^{p'}+ |g|^p+ |b|^{p'}) \xi \,dxdt  + \int\limits_{\{|u_0|>h\}}\!\!\!\! |u_0|\,dx ,
\end{multline*}
which implies \eqref{asyug}.
Now, let $S\in W^{2,\infty}(\RR)$ such that $S'$ has compact support, and  take a nonnegative  $\xi\in C^\infty_c([0,T)\times\Omega)$. Using now the regularity \eqref{regug}, we can pass to the limit in \eqref{svk} as $k\to\infty$, and thanks to \eqref{limnu} we obtain \eqref{equg}.
\end{proof}

\section{Equations with absorption}

  In this section we turn to the study of equations with absorption.   Let $h:\RR\mapsto\RR$ be a continuous function such that 
  \be\label{abs}
h(s)s\geq 0 \quad \text{for every $|s|\geq L$},
  \ee
  for some $L \ge 0$, and let us consider the evolution problem
\begin{equation}\label{main}
\begin{cases}
    u_t-\dive(a(t,x,\nabla u)) +h(u)=\mu & \text{in}\ Q,\\
    u=u_0  & \text{on}\ \{0\} \times \Omega,\\
 u=0 &\text{on}\ (0,T)\times\partial\Omega,
  \end{cases}
\end{equation} 
where $\mu\in \cM_0(Q)$, $u_0\in L^1(\Omega)$ and where $a : Q \times \RR^N \to \RR^N$ satisfies \eqref{coercp}--\eqref{monot}.  

As far as  the notion of  solution of \eqref{main} is concerned, we follow the definitions introduced in the above section. Namely,   a renormalized solution of \eqref{main} is a function $u$ such that $h(u)\in L^1(Q)$ and $u$ satisfies Definition~\ref{rendef} replacing $\mu$  with $\mu-h(u)$. Correspondingly, $u$ is a distributional solution if $h(u)\in L^1(Q)$ and $u$ is a distributional solution with $\mu-h(u)$ as right hand side.

First of all, observe that a straightforward modification of Theorem~\ref{contrazione} implies the following

\begin{theorem}\label{contrazione2}
 Let $u_1$, $u_2$ be two renormalized solutions of problem \eqref{main} with data $(u_{01}, \mu_1)$ and $(u_{02}, \mu_2)$ respectively. Then,
\begin{multline}\label{contra2}
\into (u_1-u_2)^+(\tau)\, dx + \int_0^\tau \into \left(h(u_1)-h(u_2)\right){\rm sign}^+(u_1-u_2)\,dxdt
\\
\leq \|(u_{01}-u_{02})^+\|_{\elle1}+  \| (\mu_1-\mu_2)^+\|_{\pmisure}
\end{multline}
for almost every $\tau\in (0,T)$. 
\end{theorem}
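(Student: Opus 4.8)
The plan is to repeat the proof of Theorem~\ref{contrazione} almost verbatim, treating the absorption term as one extra $L^1$-datum. The starting observation is that, since $h(u_i)\in\luq\subset\cM_0(Q)$ (an $L^1$ function is absolutely continuous with respect to the Lebesgue measure, which is diffuse) and $\mu_i\in\cM_0(Q)$, each $u_i$ is a renormalized solution in the sense of Definition~\ref{rendef} with the \emph{diffuse} datum $\mu_i-h(u_i)$; let $\la_{k,1}$, $\la_{k,2}$ be the associated sequences of measures, which satisfy $\|\la_{k,i}\|_{\pmisure}\to0$ as $k\to\infty$ by \eqref{limnu}, and recall from Proposition~\ref{ren-dis} that $u_i\in\limitate1$. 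Applying Proposition~\ref{testW} to $u_1$ and to $u_2$ and subtracting, for every $v\in W\cap\parelle\infty$ with $v=0$ on $\{T\}\times\Omega$ one gets
\begin{multline*}
-\int_Q (T_k(u_1)-T_k(u_2))v_t\,dxdt +\int_Q (\adixt {\nabla T_k(u_1)}-\adixt {\nabla T_k(u_2)})\nabla v\,dxdt
\\
= \int_Q v\,d(\mu_1-\mu_2)-\int_Q (h(u_1)-h(u_2))v\,dxdt \\
+\int_Q v\,d\la_{k,1}-\int_Q v\,d\la_{k,2}+\into (T_k(u_{01})-T_k(u_{02}))v(0,x)\,dx.
\end{multline*}

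Next, exactly as in the proof of Theorem~\ref{contrazione}, I would use $v=w_h\xi$ with $\xi\in C_c^\infty([0,T))$ nonnegative and
$$
w_h(t,x)=\frac1h\int_t^{t+h}\frac1\vep T_\vep\big(T_k(u_1)-T_k(u_2)\big)^+(s,x)\,ds\,,
$$
which lies in $W\cap\parelle\infty$ for $h>0$ small, satisfies $0\le w_h\le1$ a.e.\ (hence cap-quasi everywhere), and converges to $\frac1\vep T_\vep(T_k(u_1)-T_k(u_2))^+$ strongly in $\psob$. As $h\to0$ I would treat the time-derivative term via \cite[Lemma~2.1]{BlPo} (getting the $\liminf$ bound with $\hat\Theta_\vep(s)=\int_0^s\frac1\vep T_\vep(r)^+\,dr$), discard the diffusion term since by \eqref{monot} it converges to a nonnegative quantity, bound each $\la_{k,i}$ term by $\|\xi\|_\infty\|\la_{k,i}\|_{\pmisure}$ and the $(\mu_1-\mu_2)$ term by $\|\xi\|_\infty\|(\mu_1-\mu_2)^+\|_{\pmisure}$ (using $0\le w_h\xi\le\|\xi\|_\infty$), and pass to the limit in the \emph{new} absorption term $\int_Q(h(u_1)-h(u_2))w_h\xi\,dxdt$ by dominated convergence, since $h(u_1)-h(u_2)\in\luq$ and $0\le w_h\le1$. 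This yields an inequality of the form
\begin{multline*}
-\int_Q \hat\Theta_\vep(T_k(u_1)-T_k(u_2))\xi_t\,dxdt+\int_Q (h(u_1)-h(u_2))\frac1\vep T_\vep(T_k(u_1)-T_k(u_2))^+\xi\,dxdt
\\
\le\into \hat\Theta_\vep(T_k(u_{01})-T_k(u_{02}))\xi(0)\,dx+\|\xi\|_\infty\big(\|(\mu_1-\mu_2)^+\|_{\pmisure}+\|\la_{k,1}\|_{\pmisure}+\|\la_{k,2}\|_{\pmisure}\big).
\end{multline*}

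Letting $\vep\to0$ (so $\hat\Theta_\vep(s)\to s^+$ and $\frac1\vep T_\vep(s)^+\to{\rm sign}^+(s)$, by dominated convergence in each term) and then $k\to\infty$ (using \eqref{limnu}, $T_k(u_i)\to u_i$ a.e.\ with $u_i\in\limitate1\subset\luq$, $T_k(u_{0i})\to u_{0i}$ in $\luo$, and the a.e.\ identity ${\rm sign}^+(T_k(u_1)-T_k(u_2))\to{\rm sign}^+(u_1-u_2)$, valid because $u_1,u_2$ are a.e.\ finite, together with $h(u_1)-h(u_2)\in\luq$), one obtains
$$
-\int_Q (u_1-u_2)^+\xi_t\,dxdt+\int_Q (h(u_1)-h(u_2)){\rm sign}^+(u_1-u_2)\xi\,dxdt\le\|\xi\|_\infty\big(\|(u_{01}-u_{02})^+\|_{\elle1}+\|(\mu_1-\mu_2)^+\|_{\pmisure}\big)
$$
for every nonnegative $\xi\in C_c^\infty[0,T)$, hence for every $\xi\in W^{1,\infty}(0,T)$ with compact support in $[0,T)$. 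Finally, choosing $\xi(t)=1-\frac1\vep T_\vep(t-\tau)^+$ for $\tau\in(0,T)$ and letting $\vep\to0$ makes the first term converge to $\into(u_1-u_2)^+(\tau)\,dx$ for a.e.\ $\tau$ and the second term to $\int_0^\tau\into(h(u_1)-h(u_2)){\rm sign}^+(u_1-u_2)\,dxdt$, while $\|\xi\|_\infty\le1$; this is precisely \eqref{contra2}.

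The point I expect to require the most attention is not a single hard step but the fact that, since $h$ is \emph{not} assumed monotone here, the absorption term cannot be discarded as nonnegative and must be carried unchanged through all three limit passages ($h\to0$, $\vep\to0$, $k\to\infty$); this is exactly why it survives on the left-hand side of \eqref{contra2}. Each passage, however, reduces to a dominated-convergence argument resting only on $h(u_i)\in\luq$ and on the uniform bound $\le1$ of the truncation factors, so no genuinely new difficulty arises with respect to Theorem~\ref{contrazione}.
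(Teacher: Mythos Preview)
Your proposal is correct and is exactly the ``straightforward modification of Theorem~\ref{contrazione}'' that the paper invokes in lieu of a detailed proof. You have simply written out this modification in full, carrying the extra $L^1$ absorption term $h(u_1)-h(u_2)$ through the three limit passages by dominated convergence, which is precisely what is intended.
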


When $h$ is monotone, the above $L^1$-contraction principle plays a crucial role in such type of problems. 
We are now able to prove our main result concerning 
\eqref{main}.

\begin{theorem}\label{th2}
Let $\mu\in \cM_0(Q)$ and $u_0\in L^1 (\Omega)$ and let $h$ be a continuous function satisfying \eqref{abs}. Then, problem \eqref{main} admits a renormalized solution (in particular, a distributional solution).
If, in addition, $h$ is nondecreasing, then the renormalized solution is unique.
\end{theorem}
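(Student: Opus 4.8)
The plan is to obtain existence by approximating both the measure $\mu$ and the absorption term, following the scheme developed in Proposition~\ref{jn-app}, and to deduce uniqueness directly from the contraction estimate of Theorem~\ref{contrazione2}. More precisely, for the existence part I would set $\mu_n = \rho_n * \mu$ and $u_{0n}$ a sequence of smooth functions converging to $u_0$ in $\elle1$, and let $h_n$ be a bounded Lipschitz truncation of $h$ (for instance $h_n = T_n(h)$), so that $h_n(s)s \ge 0$ for $|s| \ge L$ uniformly in $n$ and $h_n \to h$ locally uniformly. Let $u_n \in W$ be the weak solution of
\[
(u_n)_t - \dive(a(t,x,\nabla u_n)) + h_n(u_n) = \mu_n, \quad u_n(0) = u_{0n},
\]
which exists by standard monotone operator theory since $h_n$ is bounded and the right-hand side is in $L^{p'}(0,T;V')$.

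The core of the argument is then to pass to the limit as $n \to \infty$, and here I would carry out the steps in the following order. First, the sign condition \eqref{abs} gives, testing with $T_1(u_n) - T_1(T_L(u_n))$ (or a similar cut-off away from $[-L,L]$), the a priori bound $\|h_n(u_n)\|_{\luq} \le C$ together with the usual bound $\|u_n\|_{\pli} \le C$; hence $f_n := \mu_n - h_n(u_n)$ is bounded in $L^1(Q)$ and Proposition~\ref{pro} applies, yielding (up to a subsequence) a function $u$ with $u_n \to u$ and $\nabla u_n \to \nabla u$ a.e. on $Q$, so that $h_n(u_n) \to h(u)$ a.e.\ on $Q$. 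Second — and this is the decisive point — I would prove the $L^1$-convergence of $h_n(u_n)$ using the capacitary estimate together with the equidiffuse property of $(\mu_n)$ exactly as advertised in the introduction: testing \eqref{main} for $u_n$ with $(1 - S_{k,\delta}(u_n))\,\mathrm{sign}(u_n)$ and letting $\delta \to 0$ gives
\[
\int\limits_{\{|u_n| > k\}} |h_n(u_n)| \le \int\limits_{\{|u_n| > k\}} |\mu_n| + \int\limits_{\{|u_{0n}| > k\}} |u_{0n}|,
\]
and by Theorem~\ref{stimcap} the sets $\{|u_n| > k\}$ have capacity bounded by $C \max\{k^{-1/p}, k^{-1/p'}\}$ uniformly in $n$, while by Proposition~\ref{equi} the sequence $(\mu_n)$ is equidiffuse, so the right-hand side is uniformly small for $k$ large; combined with the equi-integrability of $(u_{0n})$ and Vitali's theorem, this gives $h_n(u_n) \to h(u)$ strongly in $\luq$. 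Third, with this convergence in hand, $f_n = \mu_n - h_n(u_n)$ is a sum of an equidiffuse convolution sequence and an $L^1$-convergent sequence, so I can invoke Proposition~\ref{jn-app} (applied with $f_n = -h_n(u_n)$, which converges to $-h(u)$ in $\parelle1$) to conclude that $u$ is a renormalized solution of \eqref{renpb} with datum $\mu - h(u)$; by definition this means $u$ is a renormalized solution of \eqref{main}, and it is a distributional solution by Proposition~\ref{ren-dis}.

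For the uniqueness part, when $h$ is nondecreasing I would simply apply Theorem~\ref{contrazione2} with $(u_{01},\mu_1) = (u_{02},\mu_2) = (u_0,\mu)$: then $(\mu_1 - \mu_2)^+ = 0$ and $(u_{01}-u_{02})^+ = 0$, and since $h$ nondecreasing forces $(h(u_1) - h(u_2))\,\mathrm{sign}^+(u_1 - u_2) \ge 0$, the inequality \eqref{contra2} yields $\int_\Omega (u_1 - u_2)^+(\tau)\,dx \le 0$ for a.e.\ $\tau$, hence $u_1 \le u_2$ a.e.; exchanging the roles of $u_1$ and $u_2$ gives $u_1 = u_2$.

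The step I expect to be the main obstacle is the strong $L^1$-compactness of $h_n(u_n)$ near the ``bad'' level sets: one must be careful that the truncation $h_n$ of $h$ does not destroy the sign condition uniformly, that the test function manipulation producing the tail estimate is legitimate for the weak solutions $u_n \in W$ (which it is, since $u_n$ is a genuine weak solution and one can use $T_k$-type test functions directly), and, most importantly, that the capacitary bound of Theorem~\ref{stimcap} is truly uniform in $n$ — this is exactly why the measures $\mu_n = \rho_n * \mu$ were chosen, since their masses are uniformly bounded by $\|\mu\|_{\cM(Q)}$ and they form an equidiffuse family by Proposition~\ref{equi}. Once the interplay between the uniform capacitary smallness of $\{|u_n|>k\}$ and the equidiffuse modulus of $(\mu_n)$ is set up correctly, the remaining passages are routine.
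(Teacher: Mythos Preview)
Your proposal is correct and follows essentially the same route as the paper: mollify $\mu$, pass to the limit via Proposition~\ref{pro}, use the uniform capacitary estimate of Theorem~\ref{stimcap} together with the equidiffuse property (Proposition~\ref{equi}) to control the tail $\int_{\{|u_n|>k\}}|h_n(u_n)|$, conclude strong $L^1$-convergence by Vitali, and then invoke Proposition~\ref{jn-app}; uniqueness is exactly Theorem~\ref{contrazione2}. The only substantive difference is that you truncate $h$ to $h_n=T_n(h)$, whereas the paper works directly with $h$ in the approximate problem~--~your choice is in fact the more careful one, since it guarantees both that the approximate weak solution $u_n\in W$ exists and that $f_n=-h_n(u_n)$ is genuinely bounded, as required in the hypotheses of Proposition~\ref{jn-app}. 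Two cosmetic points: the test function for the a~priori $L^1$-bound should read $T_1(u_n-T_L(u_n))$ rather than $T_1(u_n)-T_1(T_L(u_n))$, and in the tail estimate one needs $k\ge L$ so that the sign condition applies on $\{|u_n|>k\}$; the paper uses $T_1(u_n-T_k(u_n))$ instead, which yields the integral over $\{|u_n|>k+1\}$, but the effect is the same.
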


\begin{proof}
 As in the proof of Proposition~\ref{jn-app}, we take $\mu_{n}=\mu\ast\rho_n$, where $(\rho_n)$ is a sequence of mollifiers. We then consider the solutions $u_{n}$ of 
 \begin{equation}\label{5.1}
 \begin{cases}
    (u_{n})_{t}-\dive(a(t,x,\nabla \ujn))  +h(u_{n})=\mu_{n} & \text{in}\ Q,\\
    u_{n}=u_{0n} & \text{on}\ \{0\} \times \Omega,\\
 u_{n}=0 &\text{on}\ (0,T)\times\partial\Omega.
  \end{cases}
 \end{equation}
  Since $\| \mu_{n}\|_{\pmisure}\leq \|\mu\|_{\pmisure}$, using  assumption \eqref{abs} one easily gets that $(h(u_{n}))$ is bounded in $L^1(Q)$ and $\|h(u_n)\|_{\pelle1}\leq  \|\mu\|_{\pmisure}$.  Hence, the sequence $(u_{n})$ satisfies the estimates of Proposition~\ref{pro} and the compactness properties. In particular, there exists   $u\in L^1(Q)$ such that
\eqref{conver} holds up to a  subsequence.
Moreover, Theorem~\ref{stimcap} implies that
\be\label{equicap}
\lim\limits_{k\to \infty} \sup_{n} \capp \{ |u_{n}|>k \} =0.
\ee
Multiplying the equation in \eqref{5.1} by  $T_1( u_{n}-T_k(u_n))$ we obtain
$$
\int\limits_{\{|u_{n}| >k+1\}} |h(u_{n})| \leq \int\limits_{\{|u_{n}|>k\}} |\mu_{n}|.
$$
By Proposition~\ref{equi},  the sequence $(\mu_{n})$ is equidiffuse, so that we get
$$
\lim\limits_{k\to \infty}\sup\limits_n \int\limits_{\{|u_{n}| >k+1\}} |h(u_{n})|=0
$$
We now prove the equi-integrability of the sequence $(h(u_n))$.  Indeed, since for any subset $E \subset Q$ we have
$$
\sup\limits_n \int_E  |h(u_{n})| \leq  \sup\limits_n \int_E |h(T_{k+1}(u_{n}))|+  \sup\limits_n \int\limits_{\{|u_{n}| >k+1\}} |h(u_{n})|
$$
and since, for fixed $k$, the sequence $(h(T_{k+1}(u_{n}))$ is equi-integrable , we deduce that
$$
\lim\limits_{|E|\to 0}\sup\limits_n \int_E  |h(u_{n})| \leq \sup\limits_n \int\limits_{\{|u_{n}| >k+1\}} |h(u_{n})|.
$$
Hence letting $k\to \infty$ we get the equi-integrability of $(h(u_{n}))$.  Since $(h(u_n))$ converges pointwise to $h(u)$, by Vitali's theorem,
$$
h(u_{n}) \to h(u)\quad \hbox{strongly in $L^1(Q)$.}
$$
We can apply now Proposition~\ref{jn-app} to deduce that $u$ is a renormalized solution of \eqref{main}.
When $h$ is nondecreasing, we obtain uniqueness of the renormalized solution from Theorem~\ref{contrazione2}.
\end{proof}

In the case where $h$ is nondecreasing, the existence of a solution can also be proved in a  slightly different way, which consists in proving first the result for a dense subset of measures $\mu$, then using the $L^1$-contraction principle (Theorem~\ref{contrazione2}) to obtain the result for any diffuse measure $\mu$. 
We can take for instance the subset of measures satisfying the decomposition  \eqref{cap3c1} with $g\in L^\infty(Q)$; this set is dense in view of Theorem~\ref{app}. 
For such measures the existence of solutions of \eqref{main} can be proved in the lines of the elliptic case.

\section{Extension to the nonmonotone case}

The approach developed in this paper is not limited to the case that the divergence form operator is monotone.  Let for example $a : Q \times \re\times \rn \to \rn$ be a Carath\'eodory function (i.e., $a(\cdot,\cdot,s,\xi)$
is measurable on $Q$ for every $(s,\xi)$ in $\re\times\rn$, and $a(t,x,\cdot,\cdot)$ is
continuous on $\re\times\rn$ for almost every $(t,x)$ in $Q$) such that the
following holds:
\be
a(t,x,s,\xi) \cdot \xi \geq \al|\xi|^p,
\label{coercp2}
\ee
\be
|a(t,x,s,\xi)| \leq \beta[b(t,x) +|s|^{p-1} +|\xi|^{p-1}],
\label{cont2}
\ee
\be
[a(t,x,s,\xi) - a(t,x,s,\eta)] \cdot (\xi - \eta) > 0,
\label{monot2}
\ee
for almost every $(t,x)$ in $Q$, for every $s\in\re$ and for every $\xi$, $\eta$ in $\rn$, with
$\xi \neq \eta$, where, as before, $p > 1$, 
$\al$ and $\beta$ are two positive constants, and
$b$ is a nonnegative function in $L^{p'}(Q)$. From \eqref{coercp2} we can deduce that $a(x,t,s,0)=0$ for any $s\in \re$ and a.e. $(t,x)\in Q$. Consider the problem
\begin{equation}\label{nonlinuh}
\begin{cases}
    u_t-\dive(a(t,x,u,\nabla u))+ h(u)=\mu & \text{in}\ Q,\\
u=u_0  & \text{on}\ \{0\} \times \Omega,\\
u=0 &\text{on}\ (0,T)\times\partial\Omega,
  \end{cases}
\end{equation}
where $\mu$ is a diffuse measure, $u_0\in \elle1$ and $h$ satisfies \eqref{abs}. The  method  developed to find  existence of solutions relies on the possibility to find capacitary estimates. 
The proof we have given of such estimates in Theorem~\ref{stimcap} (and Theorem~\ref{stimcap2})  used  the monotone character of the second order term,  but we can generalize these estimates in  the following

\begin{theorem}\label{stimcap3}
Assume that \eqref{coercp2}--\eqref{monot2} hold. Given  $u_0\in \elle2$ and $\mu \in \mathcal{M}(Q)\cap\pw-1p'$, let $u\in W$ be a  (weak) solution of 
\begin{equation}\label{nonlinu2}
\begin{cases}
    u_t-\dive(a(t,x,u,\nabla u))=\mu & \text{in}\ Q,\\
u=u_0  & \text{on}\ \{0\} \times \Omega,\\
u=0 &\text{on}\ (0,T)\times\partial\Omega.
  \end{cases}
\end{equation}
Then,
$$
\capp(\{|u| > k\})\leq C\max\left\{\frac{1}{k^{\frac{1}{p}}},\frac{1}{k^\frac{1}{p'}}\right\} \quad \forall k\ge 1,
$$
where $C >0$ is a constant depending on $\|\mu\|_{\mathcal{M}(Q)}$, $\|u_0\|_{\elle1}$, $\|b\|_{L^{p'}(Q)}$, $\alpha$, $\beta$, $p$ and $\Omega$.
\end{theorem}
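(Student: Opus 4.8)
The plan is to mimic the five-step proof of Theorem~\ref{stimcap}, replacing the monotone operator $-\Delta_p$ (or $-\dive(a(t,x,\nabla\cdot))$) by $-\dive(a(t,x,u,\nabla u))$ and keeping careful track of the $s$-dependence in the structure conditions \eqref{coercp2}--\eqref{monot2}. The key point to observe at the outset is that, by \eqref{coercp2}, the energy estimates only need the coercivity in $\xi$, which is uniform in $s$; so Step~1 of Theorem~\ref{stimcap} goes through verbatim. Using $v=T_k(u)$ in the weak formulation and \eqref{parts} with $\psi=\Theta_k$ one gets, exactly as in \eqref{stimar},
\[
\into \frac{[T_k(u)(r)]^2}{2}\,dx + \al\int_0^r\!\!\into |\nabla T_k(u)|^p\,dx\,dt \leq k\left(\|\mu\|_{\mathcal{M}(Q)}+\|u_0\|_{\elle1}\right),
\]
hence the bounds \eqref{sti1} with $M$ as in \eqref{defM} (up to the constant $\alpha$).

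The only genuinely new difficulty lies in Step~3 (the comparison argument for nonnegative data), because the Kato-type inequality \eqref{kato} and the comparison with the solution $z$ of the backward problem \eqref{retz} both used the precise monotone structure. My plan is to keep the same backward problem, $-z_t - \dive(a(t,x,u,\nabla z)) = -2\dive(a(t,x,u,\nabla T_k(u)))$ with $z=T_k(u)$ at $t=T$ (note we may freeze the nonlinearity in the middle slot at $u$, since $u\in W$ is fixed; existence and uniqueness of $z$ then follow from the monotone theory in $\xi$ applied to the frozen operator $\tilde a(t,x,\xi):=a(t,x,u(t,x),\xi)$, which still satisfies \eqref{coercp2}--\eqref{monot2} uniformly). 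With this choice the estimates of Step~2 carry over using \eqref{cont2} to bound the right-hand side (here $b(t,x)+|u|^{p-1}$ appears, which is why the constant in the statement depends on $\Omega$: one absorbs $\|u\|_{L^{p-1}}$ via the $L^\infty(0,T;L^1)$ bound and interpolation/Sobolev on $\Omega$). The Kato inequality $(T_k(u))_t - \dive(\tilde a(t,x,\nabla T_k(u)))\ge 0$ for $\mu\ge 0$, $u_0\ge 0$ is obtained exactly as in \eqref{kato}, testing \eqref{2.2} with $T_{k,\delta}'(u)\varphi$ and using concavity of $T_{k,\delta}$ on $\RR^+$; and then the comparison $z\ge T_k(u)$ follows by testing the difference $-z_t+(T_k(u))_t - \dive(\tilde a(t,x,\nabla z)) + \dive(\tilde a(t,x,\nabla T_k(u))) \le 0$ with $(z-T_k(u))^-$ and using strict monotonicity \eqref{monot2} of $\tilde a$ in $\xi$. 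Steps~4 and~5 (comparison with $\mu^\pm$, and the approximation of a general $\mu\in\mathcal{M}(Q)\cap\pw-1p'$ by smooth data via mollification and the $L^1$-contraction argument) are then formally identical, the only change being that the $L^1$-contraction estimate now produces constants depending also on $\alpha,\beta,\|b\|_{L^{p'}(Q)}$ and $\Omega$.

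The step I expect to be the main obstacle is the comparison principle for the frozen backward problem together with the Kato inequality: one must be sure that freezing $a(t,x,u,\cdot)$ at the (merely $L^p(0,T;V)$, not bounded) function $u$ still yields an operator for which the backward parabolic problem \eqref{retz} is well posed in $\lil2\cap\psob$ and for which the standard comparison argument applies. This is where condition \eqref{cont2} with its $|s|^{p-1}$ term must be handled carefully: since $u\in W\subset C^0([0,T];\elle2)$ need not be bounded, the term $b(t,x)+|u|^{p-1}$ lies only in $L^{p'}(0,T;L^{2/(p-1)}(\Omega))$-type spaces in general, so to keep the argument within the weak-solution framework one should in fact run the whole proof first for $\mu\in C^\infty(\overline Q)$ (Step~4), where the corresponding $u$ is bounded by parabolic regularity, and only afterwards pass to the limit as in Step~5. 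With that ordering, all the estimates are uniform and the conclusion
\[
\capp(\{|u|>k\}) \leq C\max\left\{\frac{1}{k^{\frac1p}},\frac{1}{k^{\frac1{p'}}}\right\},\qquad \forall k\ge 1,
\]
follows, with $C$ depending on $\|\mu\|_{\mathcal{M}(Q)}$, $\|u_0\|_{\elle1}$, $\|b\|_{L^{p'}(Q)}$, $\alpha$, $\beta$, $p$ and $\Omega$.
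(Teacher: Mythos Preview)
Your worry about the well-posedness of the frozen backward problem is misplaced and leads you to an unnecessary and ultimately flawed detour. Since $u\in W\subset L^p(0,T;\sob)\subset L^p(Q)$, one has $|u|^{p-1}\in L^{p'}(Q)$ directly; hence $\tilde a(t,x,\xi):=a(t,x,u(t,x),\xi)$ satisfies \eqref{coercp}--\eqref{monot} with $\tilde b:=b+|u|^{p-1}\in L^{p'}(Q)$, and the backward problem, the Kato inequality and the comparison $z\ge T_k(u)$ are all perfectly standard for this fixed monotone operator. There is no need to assume $u$ bounded, and no need to ``first do smooth $\mu$''.

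The genuine gap is precisely in the fix you propose. If you approximate $\mu$ by smooth $\mu_n$ and solve the \emph{nonmonotone} problem \eqref{nonlinu2} with data $\mu_n$, you obtain solutions $u_n$, and the $L^1$-contraction argument of Step~5 in Theorem~\ref{stimcap} requires dropping the term
\[
\int_Q\big[a(t,x,u_n,\nabla u_n)-a(t,x,u,\nabla u)\big]\cdot\nabla T_1(u_n-u)\,dxdt,
\]
which has no sign when $a$ depends on $s$. For the same reason Step~4 (comparison with the solution of the \emph{nonmonotone} problem for $\mu^\pm$) fails. Saying these steps are ``formally identical'' glosses over the one place where the $s$-dependence actually matters. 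Note also that uniqueness of weak solutions of \eqref{nonlinu2} is not available, so even identifying the limit of $u_n$ with the given $u$ is unclear.

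The paper avoids approximation entirely. It freezes at $u^+$ rather than at $u$, sets $\tilde a(t,x,\xi)=a(t,x,u^+,\xi)$, and takes $v$ to be the \emph{renormalized} solution (provided by Section~4, since $\mu^+$ is diffuse but need not lie in $L^{p'}(0,T;\duale)$) of $v_t-\dive(\tilde a(t,x,\nabla v))=\mu^+$ with $v(0)=u_0^+$. The point of freezing at $u^+$ is that on $\{u>v\}$ one has $u>v\ge0$, so $a(t,x,u,\cdot)=\tilde a(t,x,\cdot)$ there, and the comparison $u\le v$ follows from monotonicity of $\tilde a$ alone (this is Remark~\ref{comp-anom}). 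Then the Kato inequality is proved for $v$ (not for $u$), the backward problem gives $z\ge T_k(v)\ge T_k(u^+)$, and the $|s|^{p-1}$ growth is controlled via $u^+\le v$: on $\{v<k\}$ one has $(u^+)^p\le T_k(v)^p$, whence by Poincar\'e $\|\tilde a(t,x,\nabla T_k(v))\|_{L^{p'}}\le C(1+k^{(p-1)/p})$. The negative part is handled symmetrically with $u^-$, $\mu^-$.
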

\begin{proof}
The strategy is the same of the proof of Theorem~\ref{stimcap} so we only sketch the main technical changes. Let us define the auxiliary function $\tilde{a}(t,x,\xi)\equiv a(t,x,u^+, \xi)$. Note that, due to \eqref{cont2},
$$
|\tilde{a}(t,x,\xi)|\leq \beta[b(t,x) +(u^+)^{p-1} +|\xi|^{p-1}]
$$
and since $u\in L^p(0,T;\sob)$ we have that $\tilde{a}(t,x,\xi)$ satisfies \eqref{cont} (and clearly also \eqref{coercp} and \eqref{monot}).
Since $\mu$ is diffuse then $\mu^+$ is diffuse as well;   we then consider the unique renormalized solution $v$ of the following problem 
\begin{equation}\label{pbau}
\begin{cases}
    v_t-{\rm div}(\tilde{a}(t,x,\nabla v))=\mu^+ & \text{in}\ Q,\\
v=u_0^+  & \text{on}\ \{0\} \times \Omega,\\
v=0 &\text{on}\ (0,T)\times\partial\Omega.
  \end{cases}
\end{equation}
It follows by Theorem~\ref{contrazione} (see Remark~\ref{comp-anom}) that  $ u^+\leq v$. 
Now, as in Proposition~\ref{tm} we can prove that 
$$
  T_k(v)_t-{\rm div}(\tilde{a}(t,x,\nabla T_k(v)))\geq 0\ \  \ \text{in}\ \ \mathcal{D}'(Q). 
$$
Without loss of generality, assume that $T_k(v(T))$ is well defined (otherwise use a sequence $t_n\uparrow T$  such that  $T_k(v(t_n))$ is so).  We  define the function $z\in W$ as the solution of 
\begin{equation*}\label{auxi}
\begin{cases}
    -z_t-{\rm div}(\tilde{a}(t,x,\nabla z)) =-2\, {\rm div}(\tilde{a}(t,x,\nabla T_k (v)))  & \text{in}\ Q,\\
 z=T_k (v)  & \text{on}\ \{T\} \times \Omega,\\
 z=0 &\text{on}\ (0,T)\times\partial\Omega, 
  \end{cases}
\end{equation*}
and we apply a comparison argument (since $-{\rm div}(\tilde{a}(t,x,\nabla z))$ is monotone) to deduce 
$$
z\geq T_k (v)\geq T_k (u^+)\qquad \hbox{a.e. in $Q$.}
$$
Now,  $T_k(v)$ satisfies the usual estimates \eqref{sti1} (e.g. by Proposition~\ref{ren-dis}) and since
\[
\begin{split}
|\tilde{a}(t,x,\nabla T_k (v))|^{p'}
& \leq C\big[b(t,x)^{p'} +(u^+)^{p} +|\nabla T_k (v)|^{p}\big] \,\chi_{\{v<k\}}
\\
& \leq C[b(t,x)^{p'} +(T_k(v))^{p} +|\nabla T_k (v)|^{p}]
,
\end{split}
\]
using Poincar\'e inequality we have
$$
\begin{array}{c}
\| \tilde{a}(t,x,\nabla T_k (v))\|_{L^{p'}(Q)}\leq C [ \|b\|_{L^{p'}(Q)}+
\| \nabla T_k(v)\|_{L^p(Q)}^{p-1}] \leq C(1+k^{\frac {p-1}p}).
\end{array}
$$
Therefore $z$ satisfies the estimate in $W$ as in Step 2 of Theorem~\ref{stimcap} and we  conclude that 
$$
\capp(\{u> k\})\leq C\max\left\{\frac{1}{k^{\frac{1}{p}}},\frac{1}{k^\frac{1}{p'}}\right\} \quad \forall k\geq 1.
$$
The estimate for $\{u<- k\}$ follows in the same way using $\mu^-$ and $u^-$. 
\end{proof}

Finally, once we have the capacitary estimates in hand, we can follow the proof of Theorem~\ref{th2} and, through approximation, we can find  the existence of at least one renormalized (in particular, weak) solution of \eqref{nonlinuh}.

\end{document}